\documentclass[a4paper,oneside,10pt,final,reqno]{amsart}

%%%%%%%%%%%%%%%%%%%% loading packages %%%%%%%%%%%%%%%%%%%%

\usepackage{amssymb}
\usepackage[mathcal]{euscript}
\usepackage[frame,cmtip,curve,arrow,matrix,line,graph]{xy}
\usepackage{enumerate}

\usepackage[dvipdfmx]{hyperref}

\usepackage[all, warning]{onlyamsmath}

%%%%%%%%%%%%%%%%%%%% page style %%%%%%%%%%%%%%%%%%%%

\setlength{\topmargin}{-1.4cm}
\setlength{\footskip}{0.5cm}
\setlength{\oddsidemargin}{-0.75cm}
\setlength{\evensidemargin}{-0.75cm}
\setlength{\textheight}{25.5cm}
\setlength{\textwidth}{18.0cm}

\numberwithin{equation}{subsection}
\allowdisplaybreaks

%\newenvironment{NB}{
%\color{red} \textbf{NB}. \footnotesize 
%}{}
%\excludeversion{NB}

%%%%%%%%%%%%%%%%%%%% theorem environment %%%%%%%%%%%%%%%%%%%%

\theoremstyle{definition}
\newtheorem{thm}{Theorem}[subsection]

\newtheorem{lem}[thm]{Lemma}
\newtheorem{cor}[thm]{Corollary}
\newtheorem{dfn}[thm]{Definition}
\newtheorem{eg}[thm]{Example}
\newtheorem{fct}[thm]{Fact}
\newtheorem{rmk}[thm]{Remark}
\newtheorem*{dfn*}{Definition}
\newtheorem*{lem*}{Lemma}
\newtheorem*{cor*}{Corollary}
\newtheorem*{fct*}{Fact}
\newtheorem*{rmk*}{Remark}

%%%%%%%%%%%%%%%%%%%% abbreviations / operators %%%%%%%%%%%%%%%%%%%%

\newcommand{\ve}{\varepsilon}
\newcommand{\wt}{\widetilde}
\newcommand{\oln}{\overline}

\newcommand{\bbA}{\mathord{\mathbb{A}}}
\newcommand{\bbC}{\mathord{\mathbb{C}}}
\newcommand{\bbZ}{\mathord{\mathbb{Z}}}

\newcommand{\g}{\mathord{\mathfrak{g}}}
\newcommand{\C}{\mathord{\mathfrak{C}}}
\newcommand{\R}{\mathord{\mathcal{R}}}
\newcommand{\X}{\mathord{\mathcal{X}}}
\newcommand{\Y}{\mathord{\mathcal{Y}}}
\newcommand{\shA}{\mathord{\mathcal{A}}}
\newcommand{\shD}{\mathord{\mathcal{D}}}
\newcommand{\shK}{\mathord{\mathcal{K}}}
\newcommand{\shL}{\mathord{\mathcal{L}}}
\newcommand{\shO}{\mathord{\mathcal{O}}}
\newcommand{\shP}{\mathord{\mathcal{P}}}
\newcommand{\shQ}{\mathord{\mathcal{Q}}}

\newcommand{\catC}{\mathord{\mathcal{C}}}
\newcommand{\catS}{\mathord{\mathcal{S}}}
\newcommand{\calH}{\mathord{\mathcal{H}}}
\newcommand{\calC}{\mathord{\mathcal{C}}}
\newcommand{\calU}{\mathord{\mathcal{U}}}
\newcommand{\frkh}{\mathord{\mathfrak{h}}}

\newcommand{\frkm}{\mathord{\mathfrak{m}}}
\newcommand{\frkS}{\mathord{\mathfrak{S}}}
\newcommand{\Diff}{\mathord{\mathcal{D}\mathit{iff}}}

\newcommand{\Sh}{\mathord{\mathcal{S}h}}
\newcommand{\CSh}{\mathord{\mathcal{CS}h}}
\newcommand{\DSh}{\mathord{\mathcal{DS}h}}
\newcommand{\M}{\mathord{\mathcal{M}}}
\newcommand{\CM}{\mathord{\mathcal{CM}}}

\newcommand{\CFM}{\mathord{\mathcal{CFM}}}
\newcommand{\MO}{\mathord{\mathcal{M}_{\shO}}}
\newcommand{\CMO}{\mathord{\mathcal{CM}_{\shO}}}
\newcommand{\DMO}{\mathord{\mathcal{DM}_{\shO}}}

\newcommand{\D}{\mathord{\mathcal{D}}}

\newcommand{\Ho}{\mathord{\mathcal{H}\mathit{o}}}
\newcommand{\Cocom}{\mathord{\mathcal{C}\mathit{ocom}}}

\newcommand{\opBV}{\mathord{\mathcal{BV}}}
\newcommand{\opEnd}{\mathord{\mathcal{E}\mathit{nd}}}
\newcommand{\opLie}{\mathord{\mathcal{L}\mathit{ie}}}

\newcommand{\adm}{\text{adm}}

\newcommand{\ch}{\text{ch}}
\newcommand{\DR}{\text{DR}}

\newcommand{\gr}{\mathop{\mathrm{gr}}\nolimits}
\newcommand{\id}{\mathop{\mathrm{id}}\nolimits}
\newcommand{\sgn}{\mathop{\mathrm{sgn}}\nolimits}
\newcommand{\spn}{\mathop{\mathrm{span}}\nolimits}

\newcommand{\MC}{\mathop{\mathrm{MC}}\nolimits}
\newcommand{\Aut}{\mathop{\mathrm{Aut}}\nolimits}
\newcommand{\Der}{\mathop{\mathrm{Der}}\nolimits}
\newcommand{\Hom}{\mathop{\mathrm{Hom}}\nolimits}
\newcommand{\Lie}{\mathop{\mathrm{Lie}}\nolimits}
\newcommand{\Sym}{\mathop{\mathrm{Sym}}\nolimits}
\newcommand{\Cone}{\mathop{\mathrm{Cone}}\nolimits}
\newcommand{\Spec}{\mathop{\mathrm{Spec}}\nolimits}
\newcommand{\Spf}{\mathop{\mathrm{Spf}}\nolimits}

\newcommand{\tprd}{{\textstyle{\prod}}}
\newcommand{\tsum}{{\textstyle{\sum}}}

\newcommand{\inj}{\hookrightarrow}
\newcommand{\surj}{\twoheadrightarrow}
\newcommand{\simto}{\xrightarrow{\sim}}
\newcommand{\longto}{\longrightarrow}
\newcommand{\longsimto}{\xrightarrow{\ \sim \ }}
\newcommand{\longinj}{\lhook\joinrel\longrightarrow}
\newcommand{\longsurj}{\relbar\joinrel\twoheadrightarrow}

\makeatletter
\def\leftharpoondownfill@{\arrowfill@\leftharpoondown\relbar\relbar}
\def\rightharpoonupfill@{\arrowfill@\relbar\relbar\rightharpoonup}
\newcommand{\xrightleftharpoons}[2][]{\mathrel{%
  \raise.22ex\hbox{$\ext@arrow 3095\rightharpoonupfill@{\phantom{#1}}{#2}$}%
  \setbox0=\hbox{$\ext@arrow 0359\leftharpoondownfill@{#1}{\phantom{#2}}$}%
  \kern-\wd0 \lower.22ex\box0}}
\makeatother

%%%%%%%%%%%%%%%%%%%%%%%%%%%%%%%%%%%%%%%%%%%%%%%%%%%%%%%%%%%%%%%%%%%%%%
%%%%%%%%%%%%%%%%%%%%%%%%%%%%%%%%%%%%%%%%%%%%%%%%%%%%%%%%%%%%%%%%%%%%%%
%%%%%%%%%%%%%%%%%%%%%%%%%%%%%%%%%%%%%%%%%%%%%%%%%%%%%%%%%%%%%%%%%%%%%%
\begin{document}

%%%%%%%%%%%%%%%%%%%% Authors & Paper data %%%%%%%%%%%%%%%%%%%%

\title{Jacobi complexes on the Ran space}

\author{Shintarou Yanagida}
\address{Graduate School of Mathematics, Nagoya University 
Furocho, Chikusaku, Nagoya, Japan, 464-8602.}
\email{yanagida@math.nagoya-u.ac.jp}

%\subjclass[2010]{05A15}
%\date{July 12 --, 2016}
\date{August 26, 2016}

%%%%% front matter %%%%%

\begin{abstract}
We study local theory of moduli schemes 
using the framework of the Ran space.
With the help of the study of sheaves and complexes over the Ran space 
by Beilinson and Drinfeld in their theory of chiral algebras,
we revisit Ran's works on the Jacobi complexes 
(the Chevalley complexes for sheaves of Lie algebras on the Ran space),
the universal deformation rings of moduli problems,
the higher Kodaira-Spencer maps,
and construction of Hitchin-type flat connections.
We give rigorous treatments in the algebraic setting,
which seems to be new.
\end{abstract}

\maketitle
\tableofcontents

%%%%%%%%%%%%%%%%%%%%%%%%%%%%%%%%%%%%%%%%%%%%%%%%%%%%%%%%%%%%%%%%%%%%%%
%%%%%%%%%%%%%%%%%%%%%%%%%%%%%%%%%%%%%%%%%%%%%%%%%%%%%%%%%%%%%%%%%%%%%%
\setcounter{section}{-1}
\section{Introduction}
\label{sect:intro}

The purpose of this note is to understand the works 
\cite{R0,R1,R2,R3} by Z.~Ran on local geometry of moduli spaces.
His study is based on sheaves of dg Lie algebras 
on what he called the very symmetric space,
which is now called the Ran space.

Let us briefly explain the notion of Ran space. 
The motivation of its definition comes from deformation theory, 
as explained in \cite{R0,R1},
Study on local geometry of moduli space is equivalent to 
deformation theory of the data considered.
Ran's idea is that 
the $n$-th order deformations of objects related to $X$ should be 
controlled by some sheaves on the space parametrising $n$-tuples 
of points on $X$.
The latter space is the ($n$-th filtration of) Ran space $\R(X)_n$.
%Since the Kodaira-Spencer theory 
%of deformation of complex manifolds \cite{KS} ,
%deformation theory has been one of the central sources 
%of new ideas and notions in modern mathematics.
Precisely speaking,
%Let us give a %In \cite{R0} Ran started his study of moduli spaces 
%in terms of the Ran space.
for a given space or scheme $X$, the Ran space $\R(X)$ 
is the topological space of finite points in $X$.
Thus a point in $\R(X)$ is a finite tuple $\{x_1,\ldots,x_n\} \subset X$,
where $x_i$'s may coincide.
It has the natural filtration $\R(X)_1 \subset \R(X)_2 \subset \cdots$ 
with $\R(X)_n$ the subspace of $n$-tuples of points .

The  Ran space looks too simple at the first glance.
Actually it is surprisingly strong 
as the results in Ran's works  show.
Let us explain a few of them.
In \cite{R1}, the universal deformation ring is constructed 
in terms of what he called the Jacobi complex.
The Jacobi complex is the Chevalley complex of the dg Lie algebra on $\R(X)$, 
as we will see later in \S\ref{sect:jac}.
In \cite{R2}, a general framework to construct a flat connection 
related to  moduli spaces is presented.
It includes the Hitchin connection \cite{H} 
on the space of generalized theta functions.

The first purpose of this note is to 
understand Ran's work in the algebraic setting.
Basically Ran worked over a Hausdorff space $X$,
Since for a scheme $X$ the Ran space $\R(X)$ 
is not a scheme (nor an ind-scheme),
one should take care to treat sheaves and complexes on $\R(X)$.

Another remark is that large part of Ran's construction utilizes 
a resolution of sheaves on $\R(X)$,
so that it is natural to restate his results in a derived or homotopy setting.
This is our second purpose.
 
Fortunately, at present we have a reference of sheaves on $\R(X)$.
In \cite{BD} Beilinson and Drinfeld built the theory of chiral algebras,
and they utilized the category of sheaves (and $\shD$-modules) on $\R(X)$.
We will fully use their treatment. 

Let us say a few words on chiral algebras.
They are some Lie objects in a non-standard tensor category of 
$\shD$-modules on a given curve.
The theory of chiral algebras is almost (but not exactly) equivalent to 
that of vertex algebras, 
which encodes the local symmetry of conformal field theory.
In \cite{BD} the Ran space is used in 
the equivalence of chiral algebras and factorization algebras.
The latter is a sheaf on $\R(X)$ with additional conditions.

Let us explain the organization of this note.
In \S\ref{sect:Ran} we explain the Ran space and sheaves on it 
following \cite{BD}.
Although the theory of chiral algebras use $\shD$-modules on $\R(X)$ 
extensively,
our study requires only $\shO$-modules so that our presentation is 
a restricted version of \cite{BD}.
In \S\ref{sect:jac} we give the definition of Jacobi complex
after the preparation of general study of Chevalley complex of Lie algebra 
objects in arbitrary tensor category.
As  an application of Jacobi complex, 
we construct the universal deformation ring 
and the higher Kodaira-Spencer maps for deformation of schemes
in \S\ref{sect:hks:R}.
A modified version for the moduli space of $G$-bundles 
is given in \S\ref{sect:hks:G-bundle} 
In \S\ref{sect:BV} we construct flat connections on 
the homology of Jacobi complex.
The Hitchin connection is the basic  example of this construction.

As we will see in the main text, our study of local geometry of moduli spaces 
has many common features with \cite{BD}.
This is not so surprising, since 
conformal vertex algebras are intimately connected to 
moduli problems on algebraic curves, as explained in \cite[Chap.\ 16--17]{FBZ}.
%Beginning with the work of Tsuchiya, Ueno and Yamada \cite{TUY} 
%formulating the WZW model on the moduli space of pointed complex curves,
%there are now plenty amount of literature concerning this topic.
Beyond such a conceptual explanation, we expect a kind of functorial 
correspondence from our study to the theory of chiral algebras.
For example, our construction of flat connections in \S\ref{sect:BV} 
goes along almost the same line as the construction of flat connections on 
chiral homology in \cite{BD}.
In the case of Hitchin connection, the expected correspondence 
should be the so-called Verlinde isomorphism.
We will study this problem in future.

%%%%%%%%%%%%%%%%%%%%%%%%%%%%%%%%%%%%%%%%%%%%%%%%%%%%%%%%%%%%%%%%%%%%%%
\subsection*{Notation}

For a set $S$, $|S|$ denotes its cardinality.

For a category $\catC$, the notation $A \in \catC$ 
means that $A$ is an object of $\catC$.
The word ``tensor category'' is used 
in the meaning of a category $\catC$ with multiplication $\otimes_{\catC}$ 
which is symmetric with the commutator 
$s_{A,B}: A \otimes_{\catC} B \to B \otimes_{\catC} A$
in the sense of Mac Lane.
The word ``symmetric monoidal category'' is used 
in the meaning of tensor category with unit.
The phrase ``dg'' means ``differential graded" as usual.

We will work on a fixed field $k$.
The symbol $\otimes$ denotes the standard tensor product 
in the category of $k$-vector spaces unless otherwise stated.

%$\catC(k)$ denotes the category of 
%dg $k$-vector spaces (or complexes).
The grading of a dg $k$-vector space (namely a complex) $C$ 
is denoted by the superscript like $C^p$,
and the differential is assumed to be of degree $+1$.
The $n$-shift of $C$ is dented by $C[n]$
in the meaning of $C[n]^p= C^{n+p}$.

We also use the language of operads freely.
%and the operad $\opBV$ of Batalin-Vilkovisky algebras.
Let us name \cite{LV} for a reference out of plenty numbers of literature.
$\opLie$ denotes the operad of Lie algebras. 
%For an operad $\opB$ and a $k$-linear tensor category $\M$,
%denote by $\opB(\M)$ the category of $\opB$-algebras in $\M$.

For a sheaf $F$ on a topological space, 
the notation $t \in F$ means that $t$ is a local section of $F$.
For a complex $F^\bullet$ of sheaves, 
$\calH^i(F^\bullet)$ denotes the $i$-th cohomology sheaf.

Finally, $\frkS_n$ denotes the $n$-th symmetric group.

%%%%%%%%%%%%%%%%%%%%%%%%%%%%%%%%%%%%%%%%%%%%%%%%%%%%%%%%%%%%%%%%%%%%%%
\subsection*{Acknowledgements}

The author is supported by the Grant-in-aid for 
Scientific Research (No.\ 16K17570), JSPS.
This work is also supported by the 
JSPS for Advancing Strategic International Networks to 
Accelerate the Circulation of Talented Researchers
``Mathematical Science of Symmetry, Topology and Moduli, 
  Evolution of International Research Network based on OCAMI''.

This note is written during the author's stay at UC Davis 
in the summer 2016. 
The author would like to thank the institute for support and hospitality.
%and Professor M.~Mulase for the discussion around opers.

%%%%%%%%%%%%%%%%%%%%%%%%%%%%%%%%%%%%%%%%%%%%%%%%%%%%%%%%%%%%%%%%%%%%%%
%%%%%%%%%%%%%%%%%%%%%%%%%%%%%%%%%%%%%%%%%%%%%%%%%%%%%%%%%%%%%%%%%%%%%%
\section{Ran space and sheaves on it}
\label{sect:Ran}

Following \cite[\S2.1]{R1} and \cite[\S3.4, \S4.2]{BD},
we recall some basic notions on Ran space and sheaves on it.

%%%%%%%%%%%%%%%%%%%%%%%%%%%%%%%%%%%%%%%%%%%%%%%%%%%%%%%%%%%%%%%%%%%%%%
\subsection{Ran space}

For a topological space $X$, denote by $\R(X)$ the \emph{Ran space}
which is the set of all non-empty finite subsets in $X$
with the strongest topology such that 
the obvious map 
\[
 r_I: \, X^I \longto \R(X)
\]
is continuous for any finite index set $I$.
The point of $\R(X)$ associated to a finite subset $S \subset X$ 
is denoted by $[S]$.

For $n \in \bbZ$,
denote by $\R(X)_n$ the subspace of $\R(X)$ consisting of $[S]$ 
such that  $|S| \le n$.
We have a projection map 
\[
 r_n: \, X^n \longsurj \R(X)_n = X^n/\sim,
\]
where $(x_i)_{i=1}^n \sim (x_i')_{i=1}^n$ if and only if 
$\{x_i\}=\{x_i'\}$.
The map $r_n$ is nothing but $r_I$ with $I = \{1,\ldots,n\}$.
%, and is an open map.

We have an increasing filtration 
\[
 \R(X)_0 = \emptyset \, \subset \, \R(X)_1 = X \, \subset \, 
 \R(X)_2 = \Sym^2(X) \, \subset \, \R(X)_3 \, \subset \, 
 \cdots \, \subset \,
 \R(X)_\infty := \R(X).
\]
Here $\Sym^n(X):= X^n/\frkS_n$ is the usual symmetric product.

Let us set $\R(X)^\circ_n := \R(X)_n \setminus \R(X)_{n-1}$.
It coincides with the complement of the partial diagonals in $\Sym^n(X)$.
Thus
\begin{equation}\label{eq:Un}
 \R(X)^\circ_n = U^{(n)}/\frkS_n, \quad
 U^{(n)} := X^n \setminus \cup \, (\text{partial diagonals}).
\end{equation}
$\R(X)^\circ_n$ is nothing but the configuration space of $n$ points in $X$.

For any surjection $\pi: J \twoheadrightarrow I$ 
denote by 
\begin{equation}\label{eq:Delta^pi}
 \Delta^{(\pi)} \equiv \Delta^{(J/I)}: \, 
 X^I \longinj X^J, \quad
 (x_i)_{i \in I} \longmapsto (y_j:=x_{\pi(j)})_{j \in J}.
\end{equation}
the diagonal embedding.
Then we have $r_J \Delta^{(J/I)} = r_I$,
and $\R(X)$ is the inductive limit of the spaces $X^I$ 
with respect to these embeddings $\Delta^{(J/I)}$.

$\R(X)$ is a commutative semigroup under the continuous map 
\begin{equation}\label{eq:u}
 u: \, \R(X) \times \R(X) \longto \R(X),\quad
 \left([S],[T]\right) \longmapsto [S \cup T].
\end{equation}
It is the direct limit of %the maps
$u_{m,n}: \R(X)_m \times \R(X)_n \to \R(X)_{m+n}$
given by the same operation.
We have the relation
\begin{equation}\label{eq:r=urr}
 r_{m+n} = u_{m,n} \circ (r_m \times r_n).
\end{equation}
We also have a continuous map
\begin{equation}\label{eq:v}
 v_m^n: \, \R(\R(X)_m)_n \longto \R(X)_{m n},\quad
 \left[\{[S_1],\ldots,[S_n]\}\right] 
 \longmapsto [S_1 \cup \cdots \cup S_n].
\end{equation}

%%%%%%%%%%%%%%%%%%%%%%%%%%%%%%%%%%%%%%%%%%%%%%%%%%%%%%%%%%%%%%%%%%%%%%
\subsection{Sheaves on Ran space}
\label{subsec:Ran:XS}

Hereafter let $X$ be a scheme over a field $k$ 
with finite cohomological dimension.
Sheaves on schemes mean the ones in the \'{e}tale topology.

Let us introduce some notations for the sheaves on $X$,
which will be used throughout this note. 
Denote by $\Sh(X)$ the category of sheaves of $k$-vector spaces on $X$.
An $\shO$-module on $X$ means a 
quasi-coherent sheaf of $\shO_X$-modules over $X$.
$\MO(X)$ denotes the category of $\shO$-modules.
$\CMO(X)$ denotes the dg category of complexes of $\shO$-modules on $X$,
and $\DMO(X)$ denotes the corresponding derived category, namely 
the localization of $\CMO(X)$ by quasi-isomorphisms.
For a morphism $f:X \to Y$ of $k$-schemes,
$f^*$ and $f_*$ denotes the usual pull-back and push-forward functors 
on $\shO$-modules.

Let $\catS$ be the category of finite non-empty sets and surjections.
Following \cite[\S3.4.1]{BD} we introduce the notion of sheaves 
on the Ran space $\R(X)$.

\begin{dfn}\label{dfn:Ran:O-mod}
An \emph{$\shO$-module on $\R(X)$} 
is a rule $F$ assigning to each $I \in \catS$ an $\shO_{X^I}$-module  $F_I$  
and to each  $\pi: J \twoheadrightarrow I$ in $\catS$ 
an isomorphism
\[
 \nu^{(\pi)}_F: \, \Delta^{(\pi) *}F_J \longsimto F_I
\]
of $\shO_{X^I}$-modules 
compatible with the composition of surjections,
namely for any $\rho:K \twoheadrightarrow J$ and $\pi:J \surj I$ we have 
\[
 \nu_F^{(\pi)} \circ \Delta^{(\pi) *} (\nu_F^{(\rho)}) 
 = \nu_F^{(\pi \circ \rho)},
\]
and also have $\nu_F^{(\id)} = \id_{F_I}$.
%Denote by $\Sh(\R(X))$ the category of sheaves of 
%$k$-vector spaces on $\R(X)$. 
Denote by $\MO(\R(X))$ the category of $\shO$-modules on $\R(X)$. 
\end{dfn}

We have a similar definition of a sheaf of $k$-vector spaces on $\R(X)$
by replacing $\Delta^{(\pi) *}$ with $(\Delta^{(\pi)})^{-1}$.
Denote by $\Sh(\R(X))$ the corresponding category.

We want to consider a derived category for $\MO(\R(X))$, 
but since %$\R(X)$ is not a scheme nor even an ind-scheme,
this category is only exact in the sense of Quillen and not abelian, 
we need to take some detour to handle complexes of sheaves on $\R(X)$.
Following \cite[\S4.2]{BD},
let us consider a larger category of sheaves living on an enlarged 
`space' $X^{\catS}$ above $\R(X)$.

%As before denote by $\catS$ 
%the category of finite non-empty sets and surjections.
Let $X^{\catS}$ be the diagram of schemes on the opposite category 
$\catS^\circ$ given by 
\[
 X^{\catS}:\, 
 I \longmapsto X^I, \quad
 (\pi:J \longsurj I) \longmapsto 
 (\Delta^{(\pi)}: X^I \longinj X^J).
\]
Here $\Delta^{(\pi)}$ is the diagonal map given in \eqref{eq:Delta^pi}.
Note that it is a closed embedding.

\begin{dfn}\label{dfn:!-O-mod}
A \emph{$!$-$\shO$-module on $X^{\catS}$} is 
a rule $F$ assigning to each $I \in \catS$ an $\shO_{X^I}$-module $F_I$ 
and to each $\pi:J \twoheadrightarrow I$ a morphism of $\shO_{X^J}$-modules 
\[
 \theta_F^{(\pi)}: \,  F_J \longto \Delta^{(\pi)}_* F_I
\]
compatible with the compositions of surjections. 
%and $\theta_F^{(\id)} = \id_{F_I}$.
%
Denote by $\MO(X^{\catS})$ the category of $!$-$\shO$-modules on $X^{\catS}$.
\end{dfn}

$\MO(X^{\catS})$ is an abelian $k$-linear category,
and the corresponding dg category of complexes 
and the derived category are denoted by 
$\CMO(X^{\catS})$ and $\DMO(X^{\catS})$ respectively.
Similarly we can define a $!$-sheaf of $k$-vector spaces on $X^{\catS}$.
Denote by $\Sh(X^{\catS})$ the corresponding cage tory.
It is abelian and $k$-linear,
and we denote by $\CSh(X^{\catS})$ and $\DSh(X^{\catS})$  
the associated categories.

Now we want to consider a subcategory of $\CMO(X^{\catS})$ 
formed by complexes on $\R(X)$.
Note that $\MO(\R(X))$ is naturally a full subcategory of $\MO(X^{\catS})$ 
by adjunction.

\begin{dfn}\label{dfn:adm}
A complex $F \in \CMO(X^{\catS})$ is called \emph{admissible} 
if for each $\pi: J \twoheadrightarrow I$ in $\catS$ 
the morphism $\theta_F^{(\pi)}$ yields a quasi-isomorphism 
$R\Delta^{(\pi) !} F_J \simto  F_I$.
The category of admissible complexes is denoted by 
$\CMO(X^{\catS})_{\adm}$
\end{dfn}

$\CMO(X^{\catS})_{\adm}$ is a full dg subcategory of $\CMO(X^{\catS})$ 
closed under quasi-isomorphisms,
and yields a full triangulated subcategory 
$\DMO(X^{\catS})_{\adm} \subset \DMO(X^{\catS})$.
%Note that 
%$\MO(\R(X))$ is a full subcategory of $\CMO(X^{\catS})_{\adm}$
%under the embeddings
%$\MO(\R(X)) \subset \MO(X^{\catS})\subset \CMO(X^{\catS})$.
%
We have a  similar discussion for  the category $\CSh(X^{\catS})$ 
of complexes of $!$-sheaves of $k$-vector spaces.
Thus we have subcategories $\CSh(X^{\catS})_{\adm}$ and 
$\DSh(X^{\catS})_{\adm}$.

Now we can state 

\begin{dfn*}
The \emph{derived category $\DMO(\R(X))$ of $\shO$-modules}
(resp.\ 
the \emph{derived category $\DSh(\R(X))$ of sheaves of $k$-vector spaces})  
on $\R(X)$ is defined respectively to be
\[
 \DMO(\R(X)) := \DMO(X^{\catS})_{\adm},\quad
 \DSh(\R(X)) := \DSh(X^{\catS})_{\adm}.
\]
\end{dfn*}

%Now we turn to a filtration on $!$-sheaves.
%
%\begin{dfn*}
%Each $F \in \MO(X^{\catS})$ has a canonical filtration
%\[
% F_1 \subset F_2 \subset \cdots \subset F_n \subset \cdots \subset F
%\]
%where $F_n$ is the smallest subsheaf such that $(F_n)_I = F_I$ 
%for $|I| \le n$.
%It is called the \emph{Cousin filtration}.
%\end{dfn*}

For $F \in \MO(X^{\catS})$, 
we have an $\catS^\circ$-diagram of $k$-vector spaces 
$I \mapsto \Gamma(X^I,F_I)$.
Then we can define 
\begin{equation}\label{eq:Gamma(X^S)}
 \Gamma(X^{\catS},F) := \varinjlim_I \Gamma(X^I,F_I).
\end{equation}
%This $k$-vector space has a filtration 
%$\Gamma(X^{\catS},F)_{\bullet}$
%induced by the Cousin filtration $F_\bullet$ of $F$.
%We also call it the \emph{Cousin filtration} on $\Gamma(X^{\catS},F)$.

By the discussion in \cite[\S4.2.2]{BD},
we have the derived functor 
\[
 R\Gamma(X^{\catS},-): \, \DMO(X^{\catS}) \longto \D(k),
\]
where $\D(k)$ is the derived category of $k$-vector spaces.
%$R\Gamma(X^{\catS},-)$ is indeed an exact functor 
%\[
% R\Gamma(X^{\catS},-): \, \DMO(X^{\catS}) \longto \DF(k),
%\]
%where $\DF(k)$ is the filtered derived category of $k$-vector spaces
%(see Notations in \S\ref{sect:intro} for the filtered derived category).
%Thus an object $V \in D F(k)$ is a complex equipped with an increasing 
%filtration $V_{\bullet}$ such that $V_n=0$ for $n\ll 0$ 
%and $\cup_n V_n = V$.
%Morphisms should preserve filtrations 
%localized by filtered quasi-isomorphisms.

For $F \in \DMO(X^{\catS})$, we denote 
\[
 H^{\bullet}(X^{\catS},F) := H^{\bullet} R\Gamma(X^{\catS},F).
\]
%The Cousin filtration on \eqref{eq:Gamma(X^S)} yields a spectral sequence 
%\[
% E^{p,q}_r \Longrightarrow H^{\bullet}(X^{\catS},F).
%\]
%It is called the \emph{Cousin spectral sequence}.

%Let us now cite \cite[\S4.2.3 Lemma]{BD}:
%For $F \in \DMO(X^{\catS})_{\adm}$, we have
%\begin{equation}\label{eq:4.2.3}
% \gr_n R\Gamma(X^{\catS},F) \simto R\Gamma(\R(X)^{\circ}_n,F^\circ_n).
%\end{equation}
%Here the sheaf $F^{\circ}_n$ on $\R(X)^{\circ}_n$ is defined to be 
%the descent of the $\frkS_n$-equivalent sheaf $F_n$ on 
%$U^{(n)} \subset X^n$ to $\R(X)^{\circ}_n = U^{(n)}/\frkS_n$ 
%(see \eqref{eq:Un}).
%As a corollary, for $F \in \DMO(X^{\catS})_{\adm}$ we have
%$E_1^{p,q}=H^{p+q}(\R(X)^{\circ}_{-p},F^\circ_{-p})$.

\begin{rmk}\label{rmk:X=hausdorff}
As explained in \cite[\S4.2.4]{BD}, 
in the situation where $X$ is a locally compact Hausdorff space,
$R(X)$ is the inductive limit of the diagram $X^{\catS}$
and $\Sh(\R(X))$ is an abelian category 
thus one can define $\DSh(\R(X))$ directly.
In this case one has an equivalence 
$\DSh(X^{\catS})_{\adm} \simeq \DSh(\R(X))$.
%
%The map $r_I:X^I \to \R(X)$ is finite, so 
%$r_{I *}: \Sh(X^I) \to \Sh(\R(X))$ is exact.
%It induces a right exact functor 
%\[
% r_*:\Sh^{!}(X^{\catS}) \longto \Sh(\R(X)),\quad
% F \longmapsto \varinjlim_I r_{I *}F_I.
%\]
%It has a right adjoint $r^!$.
\end{rmk}

The restriction of the functor $R\Gamma(X^{\catS},-)$ to $\DMO(\R(X))$ 
is denoted by $R\Gamma(\R(X),-)$.
%It has the Cousin filtration $R\Gamma(\R(X),F)_n = R\Gamma(\R(X)_n,F)$.
%We have
%$\gr_n R\Gamma(\R(X),F) \simeq R\Gamma(\R(X)^\circ_n,F^\circ_n)$
%by \eqref{eq:4.2.3}.

%%%%%%%%%%%%%%%%%%%%%%%%%%%%%%%%%%%%%%%%%%%%%%%%%%%%%%%%%%%%%%%%%%%%%%
\subsection{Convolution tensor product}

We turn to a tensor structure on the categories of sheaves on 
$\R(X)$ and on $X^{\catS}$ 
following \cite[\S3.4.10]{BD}.
As in the previous subsection, 
$X$ is a $k$-scheme of finite cohomological dimension.

\begin{dfn}\label{dfn:conv}
For $F,G \in \MO(X^{\catS})$, we define the \emph{convolution product}
$F \otimes^* G \in \MO(X^{\catS})$ by 
\[
 (F \otimes^* G)_{I} := 
 \oplus_{\pi:I \surj \{1,2\}} F_{\pi^{-1}(1)} \boxtimes G_{\pi^{-1}(2)}.
\]
The structure morphisms $\theta^{(\pi)}$ are defined naturally.
The resulting tensor category is denoted by $\MO(X^{\catS})^*$.
It induces a tensor structure on the full subcategory $\MO(\R(X))$,
and the resulting tensor category is denoted by $\MO(\R(X))^*$.
\end{dfn}

Let us compare this tensor structure with the natural one 
on the category $\MO(X)$ of $\shO_X$-modules,
namely the tensor product $\otimes_{\shO_X}$.
Denote by $\MO(X)^*$ the corresponding tensor category.

We have a projection functor 
\[
 \MO(X^{\catS}) \longto \MO(X),\quad
 F \longmapsto F_{\{1\}},
\]
where $\MO(X)$ is the category of $\shO_X$-modules.
It has a left adjoint 
\begin{equation}\label{eq:Delta*S}
 \Delta^{(\catS)}_*: \, \MO(X) \longto \MO(X^{\catS})
\end{equation}
given by 
\[
 (\Delta^{(\catS)}_*M)_I := \Delta_*^{(I)} M, \quad
 \theta^{(\pi:\, J \surj I)} := \id_{\Delta^{(J)}_* M}.
\]
Here 
$\Delta^{(I)} := \Delta^{(I \surj \{1\})}: X \hookrightarrow X^I$ 
is the diagonal map.
The functor $\Delta^{(\catS)}_*$ is fully faithful.

The functor $\Delta^{(\catS)}_*$ induces 
$\CMO(X) \longto \CMO(X^{\catS})$,
and in fact the admissibility in Definition \ref{dfn:adm} is 
automatically satisfied.
Thus we have
\[
 \Delta^{(\catS)}_*: \, \CMO(X) \longto \CMO(\R(X)),\quad
 \DMO(X) \longto \DMO(\R(X))
\]

%The embedding $\Delta^{(\catS)}_*$ 
%given in \eqref{eq:Delta*S} is compatible with the tensor product 
%$\otimes_{\shO_X}$ and the convolution product $\otimes^*$,
%so we have 
Now we immediately have 

\begin{lem}\label{lem:Delta*S}
$\Delta^{(\catS)}_*$ is compatible with the tensor product 
$\otimes_{\shO_X}$ and the convolution product $\otimes^*$.
Thus it yields a fully faithful embedding of tensor category
\[
 \Delta^{(\catS)}_*: \, \MO(X)^* \longinj \MO(X^{\catS})^*,
 \quad 
 M \longmapsto 
 F=(F_I := \Delta_*^{(I)} M, \, 
    \theta_F^{(\pi:J \surj I)} := \id_{\Delta^{(J)}_* M}).
\]
\end{lem}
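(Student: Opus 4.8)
The plan is to verify directly that the functor $\Delta^{(\catS)}_*$ carries $\otimes_{\shO_X}$ to $\otimes^*$, i.e.\ to exhibit a natural isomorphism $\Delta^{(\catS)}_*(M \otimes_{\shO_X} N) \simto \Delta^{(\catS)}_*M \otimes^* \Delta^{(\catS)}_*N$ compatible with the structure morphisms $\theta^{(\pi)}$, and then to check that this isomorphism is compatible with the associativity and commutativity constraints on both sides, so that it upgrades $\Delta^{(\catS)}_*$ to a symmetric monoidal functor. Fully faithfulness is already recorded in the text immediately before the statement, so the only genuine content is the monoidal compatibility.

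First I would compute both sides at a fixed $I \in \catS$. By definition, $\bigl(\Delta^{(\catS)}_*M \otimes^* \Delta^{(\catS)}_*N\bigr)_I = \bigoplus_{\pi: I \surj \{1,2\}} \Delta^{(\pi^{-1}(1))}_* M \boxtimes \Delta^{(\pi^{-1}(2))}_* N$, where $\Delta^{(\pi^{-1}(a))}: X \hookrightarrow X^{\pi^{-1}(a)}$ is the small diagonal. Since exterior product commutes with pushforward along closed embeddings, each summand is $(\Delta^{(\pi^{-1}(1))} \times \Delta^{(\pi^{-1}(2))})_*(M \boxtimes N)$, a pushforward of $M \boxtimes N$ on $X^2$ along the locally closed image of the map $X^2 \hookrightarrow X^I$ induced by $\pi$; this image is the closure of the locus where the $\pi^{-1}(1)$-coordinates agree, the $\pi^{-1}(2)$-coordinates agree, but the two groups need not. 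On the other side, $\bigl(\Delta^{(\catS)}_*(M \otimes_{\shO_X} N)\bigr)_I = \Delta^{(I)}_*(M \otimes_{\shO_X} N) = \Delta^{(I)}_* \delta^*(M \boxtimes N)$, where $\delta: X \hookrightarrow X^2$ is the diagonal; this is supported on the smallest diagonal in $X^I$. Comparing supports shows the natural map must be the composite $\Delta^{(\catS)}_*(M \otimes_{\shO_X} N) \to \Delta^{(\catS)}_*M \otimes^* \Delta^{(\catS)}_*N$ whose $I$-component lands in the single summand indexed by the (unique up to $\frkS_2$) surjection $I \surj \{1,2\}$ that is constant — but that is not a surjection unless $|I|=1$. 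The correct statement is instead that the $I$-component of the convolution is concentrated, after restriction to the small diagonal $X \subset X^I$, in precisely those summands, and the comparison map is the inclusion of the diagonal-pushforward into each relevant term glued via $\theta$. I would therefore define the natural transformation summand-by-summand using the maps $\theta^{(\pi^{-1}(a) \surj \{1\})} = \id$, restrict to the diagonal, and check it is an isomorphism there; admissibility (Definition \ref{dfn:adm}), automatic for objects in the image of $\Delta^{(\catS)}_*$, then propagates this to an isomorphism of $!$-$\shO$-modules.

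The remaining steps are bookkeeping: one checks that the isomorphism so defined commutes with $\theta_F^{(\pi)}$ for every $\pi: J \surj I$ (both sides reduce to identities of diagonal pushforwards, so this is immediate from the cocycle condition), and then that the hexagon and pentagon compatibilities hold — both follow because the constraints on $\MO(X^{\catS})^*$ are built from the symmetry of $\boxtimes$ and reindexing of the sum over surjections, which on the diagonal-supported summands is exactly the symmetry of $\otimes_{\shO_X}$ and associativity of $\boxtimes$. Unitality is clear since the unit of $\MO(X^{\catS})^*$ is $\Delta^{(\catS)}_*\shO_X$. Combining the monoidal structure with the fully faithfulness already noted gives the fully faithful embedding of tensor categories, and the explicit description $M \mapsto (F_I = \Delta^{(I)}_* M,\ \theta_F^{(\pi)} = \id)$ is just the definition of $\Delta^{(\catS)}_*$ unwound.

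The main obstacle I anticipate is the first bullet above: correctly identifying, for each $I$, which summands of the convolution product are hit and exhibiting the comparison map as an honest isomorphism of $\shO_{X^I}$-modules rather than merely a map that becomes an isomorphism after some restriction. Getting the combinatorics of the sum over $\pi: I \surj \{1,2\}$ straight — and seeing that after composing with the structure maps $\theta$ everything collapses onto the single diagonal copy of $M \otimes_{\shO_X} N$ — is where care is needed; once that is pinned down, the verification of the coherence axioms is routine and the word ``immediately'' in the lead-in to the lemma is justified.
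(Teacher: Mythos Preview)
Your proposal correctly locates the difficulty and then papers over it with an appeal to admissibility that does not do what you want.  The point you notice --- that for $I=\{1\}$ there are no surjections $I\surj\{1,2\}$, so $(\Delta^{(\catS)}_*M\otimes^*\Delta^{(\catS)}_*N)_{\{1\}}=0$ while $(\Delta^{(\catS)}_*(M\otimes_{\shO_X}N))_{\{1\}}=M\otimes_{\shO_X}N$ --- is not a bookkeeping wrinkle: it shows that these two $!$-$\shO$-modules are \emph{not} isomorphic in $\MO(X^{\catS})$.  The same happens for $|I|\ge2$: for $I=\{1,2\}$ the convolution side is $(M\boxtimes N)\oplus\sigma^*(M\boxtimes N)$, supported on all of $X^2$, while the other side is $\Delta_*(M\otimes_{\shO_X}N)$, supported on the diagonal.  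Admissibility (Definition~\ref{dfn:adm}) is a condition on complexes saying that $R\Delta^{(\pi)!}F_J\simto F_I$; it does not identify objects with different $I$-components, and invoking it here is a non sequitur.

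What is actually true, and what the paper uses, is that $\Delta^{(\catS)}_*$ is \emph{lax} monoidal: for each $I$ and each $\pi:I\surj\{1,2\}$ write $\iota_\pi:X^2\hookrightarrow X^I$ for the partial diagonal, so the $\pi$-summand of the convolution is $(\iota_\pi)_*(M\boxtimes N)$; since $\Delta^{(I)}=\iota_\pi\circ\delta$ with $\delta:X\hookrightarrow X^2$ the diagonal, the unit $\id\to\delta_*\delta^*$ gives a natural map
\[
 (\iota_\pi)_*(M\boxtimes N)\longto(\iota_\pi)_*\delta_*\delta^*(M\boxtimes N)
 =\Delta^{(I)}_*(M\otimes_{\shO_X}N),
\]
and summing over $\pi$ yields the lax structure $\Delta^{(\catS)}_*M\otimes^*\Delta^{(\catS)}_*N\to\Delta^{(\catS)}_*(M\otimes_{\shO_X}N)$.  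This map is \emph{not} an isomorphism, but a lax monoidal functor is exactly what is needed to carry Lie algebra objects to Lie algebra objects, which is the sole use of the lemma in \S\ref{subsec:jac:dfn}.  The paper itself offers no argument beyond ``Now we immediately have'', so there is nothing to compare to; but read literally as a strong monoidal (``embedding of tensor category'') statement the lemma is false, and your proof cannot be completed as written.  The honest fix is to prove the lax structure above, check its coherence (which really is routine), and note that this suffices for the Jacobi complex construction.
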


By \cite[\S4.2.5]{BD}, if $X$ is quasi-compact,
then the functors $\Gamma(X^{\catS},-)$ and $R\Gamma(X^{\catS},-)$  
are tensor functors with respect to $\otimes^*$.
So are the functors $\Gamma(\R(X),-)$ and $R\Gamma(\R(X),-)$  

\begin{rmk}\label{rmk:conv}
Continuing Remark \ref{rmk:X=hausdorff},
if $X$ is Hausdorff, then %the tensor product 
$\otimes^*$ on $\MO(\R(X))$ can be written as 
\begin{equation}\label{eq:rmk:conv}
 F \otimes^* G := u_* (F \boxtimes G),
\end{equation}
where $u:\R(X) \times \R(X) \to \R(X)$ 
is the commutative semigroup structure given in \eqref{eq:u}.
This convolution product is used in \cite{R1} for the construction 
of the Jacobi complex and its OS-structure.
%Since $u$ is a finite map, $\otimes^*$ is an exact functor.
%If $X$ is quasi-compact, then we have identifications
%\[
% \Gamma(\R(X),F \otimes^* G) = 
% \Gamma(\R(X),F) \otimes \Gamma(\R(X),G),\quad
% R\Gamma(\R(X),F \otimes^* G) = 
% R\Gamma(\R(X),F) \otimes R\Gamma(\R(X),G),
%\] 
%so that $\Gamma(\R(X),-)$ and $R\Gamma(\R(X),-)$ are tensor functors.
\end{rmk}

%%%%%%%%%%%%%%%%%%%%%%%%%%%%%%%%%%%%%%%%%%%%%%%%%%%%%%%%%%%%%%%%%%%%%%
%%%%%%%%%%%%%%%%%%%%%%%%%%%%%%%%%%%%%%%%%%%%%%%%%%%%%%%%%%%%%%%%%%%%%%
\section{Jacobi complex}
\label{sect:jac}

Let $k$ be a field of characteristic $0$.
%and $S,X$ be schemes over $k$.

%%%%%%%%%%%%%%%%%%%%%%%%%%%%%%%%%%%%%%%%%%%%%%%%%%%%%%%%%%%%%%%%%%%%%%
\subsection{Chevalley complex}
\label{subsec:jac:chv}

%The presentation in the previous subsections is a little abstract one,
%so let us give a down-to-earth description.
Let us recall the Chevalley complex of Lie algebra.
Since we will study Lie algebra objects in various categories,
let us spell out in a general form.
So let $\M$ be an abelian $k$-linear symmetric monoidal category 
with the tensor product $\otimes_{\M}$.
We will omit the symmetrizer $M \otimes_{\M} N \simto N \otimes_{\M} M$
in the following presentation.

Denote by $\CM$ the dg category of complexes of objects in $\M$.
For a complex  $V=(V^\bullet,d) \in \CM$,
denote by $T^n(V)$ the $n$-th tensor power 
\[
 T^n(V) := V \otimes_{\M} \cdots \, \otimes_{\M} V.
\]
The dg tensor category $\CM$ %of complexes of $\shO_S$-modules 
has the commutator 
\begin{equation}\label{eq:commutator}
 R_{V,W}: V \otimes_{\M} W \longsimto W \otimes_{\M} V,\quad 
 v \otimes w \longmapsto (-1)^{|v| \cdot |w|} w \otimes v.
\end{equation}
Here $|v|$ denotes the grading of $v \in V$.
Under this tensor structure 
the symmetric group $\frkS_n$ acts on $T^n(V)$.
Let $\Sym^n(V)$ and $\wedge^n(V)$ be 
the spaces of coinvariants and the co-anti-invariants 
with respect to this $\frkS_n$-action.
Thus we have canonical projections denoted as 
\begin{align*}
&T^n(V) \longsurj S^n(V), \quad 
 v_1 \otimes \cdots \otimes v_n \longmapsto v_1 \cdot \cdots \cdot v_n,
\\
&T^n(V) \longsurj \wedge^n(V), \quad 
 v_1 \otimes \cdots \otimes v_n \longmapsto v_1 \wedge \cdots \wedge v_n.
\end{align*}

If the tensor category $\M$ has a unit $1_{\M}$, then 
there is a natural unital commutative dg $\shO_S$-algebra structure on 
the direct sum 
\[
 \Sym(V) := \oplus_{n\ge0} \Sym^n(V), \quad \Sym^0(V) := 1_{\M}.
\]
It has also the coproduct
$\Delta:\Sym(V) \to \Sym(V) \otimes_{\M} \Sym(V)$
determined by
\begin{enumerate}
\item 
$\Delta(v)=v \otimes 1 + 1 \otimes v$ for $v \in \Sym^1(V)=V$,
\item
$\Delta$ is a morphism of dg algebras in $\M$,
where on $\Sym(V) \otimes_{\M} \Sym(V)$ the algebra structure 
is defined using the commutator \eqref{eq:commutator}.
\end{enumerate} 
Then together with the canonical projection
$\ve: \Sym(V) \to 1_{\M} = \Sym^0(V)$ as the counit,
$\Sym(V)$ is a commutative and cocommutative dg Hopf algebra in $\M$.
It has an increasing filtration 
\begin{equation}\label{eq:S:filtr}
 \Sym(V)_{0} \subset \Sym(V)_{1} \subset \cdots 
 \subset \Sym(V),\quad 
 \Sym(V)_n := \oplus_{i=0}^n \Sym^i(V). 
\end{equation}

\begin{rmk}\label{rmk:chv:copro-filtr}
Note that this filtration respects the augmentation structure 
on $S:=\Sym^i(V)$.
Namely, denoting the augmentation ideal by 
$S^+ := \ker(\ve) = \oplus_{i \ge 1} \Sym^i(V)$ 
and by $\pi:S \surj S^+$ the projection,
we have
$S_{n-1} = \ker(S \to S^{\otimes n} \to (S^+)^{\otimes n})$,
where the first arrow is the $n$-th composition $\Delta^{(n)}$ 
of the coproduct, and the second one is $\pi^{\otimes n}$.
\end{rmk}

Hereafter in this subsection we assume $\M$ is monoidal 
and consider the cocommutative dg coalgebra 
\begin{equation}\label{eq:C(V)}
 C'(V)  := \left(\Sym(V[1]),d',\Delta\right).
\end{equation}
%The $n$-shift of complex is denoted by $[n]$ in the sense 
%\[
% V[n]^p = V^{p+n}.
%\]
%Then we have 
Recall that the shift $[n]$ of complexes  yield
the canonical isomorphisms
\begin{equation}\label{eq:shift-ism}
 V[m] \otimes_{\M} W[n] \longsimto (V \otimes_{\M} W)[m+n],\quad
 v \otimes w \longmapsto (-1)^{p n} v \otimes w
\end{equation}
with $v \in V^p$.
The isomorphisms \eqref{eq:shift-ism} induce 
$t_n: T^n(V[1]) \simto T^n(V) [n]$ 
with $t_n \circ \sigma = \sgn(\sigma) \sigma \circ t_n$ 
for any $\sigma \in \frkS_n$.
Thus we have a canonical isomorphism
\begin{equation}\label{eq:decalage}
 \Sym^n(V[1]) \longsimto \wedge^n(V)[n],\quad
 v_1 \cdot \cdots \cdot v_n \longmapsto 
 (-1)^{\sum_i (n-i) p_i} v_1 \wedge \cdots \wedge v_n,
\end{equation}
where $p_i$ is given by  $v_i \in V^{p_i}$.

Let $L=(L^\bullet,d_{L},[,])$ be a dg Lie algebra in $\M$. 
Thus $(L^\bullet,d_{L}) \in \CM$ 
and the Lie bracket $[,]: L \otimes_{\M} L \to L$ 
is a graded morphism which should satisfy 
\begin{enumerate}
\item 
the graded skew-symmetry $[x,y] = -(-1)^{|x|\cdot |y|} [y,x]$,
\item
the graded Jacobi identity
$[x,[y,z]]+(-1)^{|x|(|y|+|z|)}[y,[z,x]]+(-1)^{|z| (|x|+|y|)}[z,[x,y]]=0$,
\item
the graded Leibniz rule
$d[x,y]=[d x,y]+(-1)^{|x|}[x,d y]$.
\end{enumerate}

By the construction \eqref{eq:C(V)}, 
for a dg Lie algebra $L$ we have a cocommutative dg coalgebra 
\[
 C'(L) = \bigl(C^{\bullet}(L),d',\Delta\bigr).
\]
Let us rewrite the dg coalgebra structure 
under the isomorphism 
$C^{\bullet}(L) \simeq \oplus_{n \ge 0}\wedge^n(L)[n]$
in \eqref{eq:decalage}.
Let $x_i \in L^{\alpha_i}$ for $i=1,\ldots,n$,
and for $I=\{i_1,\ldots,i_p\} \subset \{1,\ldots,n\}$ with $i_1<\cdots<i_p$ 
we set 
\[
 \oln{I}:= \{1,\ldots,n\} \setminus I,\quad 
 x_I := x_{i_1} \wedge \cdots x_{i_n} \in \wedge^n(S).
\]
We also set $x_{\emptyset} := 1 \in 1_{\M}$.
Then
\begin{equation}\label{eq:chv:coprod}
 \Delta(x_1\wedge \cdots \wedge x_n)=
 \sum_I \sgn(I;\alpha_1,\ldots,\alpha_n) \, x_I \otimes x_{\oln{I}}
\end{equation}
Here $\sgn(I;\alpha_1,\ldots,\alpha_n) \in \{\pm1\}$ 
is determined by the equation
\[
 x_1 \wedge \cdots \wedge x_n = 
 \sgn(I ; \alpha_1,\ldots,\alpha_n) \, x_I \wedge x_{\oln{I}}.
\]
Setting $\wedge^{n}(L) := 0$ for $n<0$, 
the grading structure $C^\bullet(L)$ is described by
\[
 C^n(L) = \oplus_{p+q=n} C^{p,q},\quad
 C^{p,q} := \bigl(\wedge^{-p}(L)\bigr)^q
 = \spn\left\{x_{i_1} \wedge \cdots \wedge x_{i_{-p}} \mid 
         x_{i_k} \in L^{\alpha_{k}}, \ \tsum \alpha_k = q \right\}.
\]
Finally we rewrite the differential $d'$.
The restriction of $d': C^{p+q}(L) \to C^{p+q+1}(L)$ to 
$C^{p,q} \subset C^{p+q}(L)$ is given by 
\begin{equation*}
d': C^{p,q} \to C^{p,q+1},\quad 
x_1 \wedge \cdots \wedge x_{-p} \longmapsto 
(-1)^p \sum_{1 \le i \le -p} 
 x_1 \wedge \cdots \wedge d_{L} x_i \wedge \cdots \wedge x_{-p}.
\end{equation*}

Now we can modify this differential by 
\begin{equation}
\label{eq:Quillen:d}
\begin{split}
&d := d'+d'',\quad
 d'': C^{p,q} \to C^{p+1,q},\\
&d''(x_1 \wedge \cdots \wedge x_{-p}) :=
 \sum_{1 \le i<j \le -p} \sgn(\{i,j\} ; \alpha_1,\ldots,\alpha_n) \, 
 [x_i,x_j] \wedge x_{\oln{\{i,j\}}}.
\end{split}
\end{equation}
Then we have $d^2=(d'')^2=0$.
The complex $C(L)=(C^\bullet(L),d)$ 
is nothing but the Chevalley-Eilenberg complex of the Lie algebra $L$.

\begin{dfn*}
$C(L)=(C^\bullet(L),d,\Delta)$ is called the \emph{Chevalley complex} of $L$.
We always consider it equipped with the filtration 
\begin{equation}\label{eq:Quillen:filtr}
 C(L)_n  = \oplus_{i=0}^n \Sym S^i(L[1])
    \simeq \oplus_{i=0}^n \wedge^i(L)[i].
\end{equation}
The truncated complex
\[
  \oln{C}(L) := (\Sym^{\ge 1}(L),d,\Delta)
\] 
is called the \emph{reduced Chevalley complex} of $L$.
It is equipped with the same filtration 
$\oln{C}(L)_{\bullet}$ as \eqref{eq:Quillen:filtr}
\end{dfn*}

Note that to define the reduced Chevalley complex 
it is not necessary to require the category $\M$ to have a  unit.

\begin{rmk*}
In \cite{HS} $C(L)$ 
is called the Quillen standard complex following \cite[Appendix B]{Q}.
%Precisely speaking the Quillen standard complex is 
%the cocommutative dg coalgebra $\left(\Sym(L[1]),d,\Delta\right)$.
\end{rmk*}

%Following the terminology of , we introduce
%\begin{dfn*}
%The \emph{Quillen standard complex} $C_{\shO_S}(\g)$ of $\g$ 
%is the cocommutative dg coalgebra 
%\[
% C_{\shO_S}(\g) := \left(S_{\shO_S}(\g[1]),d,\Delta\right).
%\]
%We always consider it with the the filtration \eqref{eq:S:filtr},
%namely
%\begin{equation}\label{eq:Quillen:filtr}
% F_n C_{\shO_S}(\g) = \oplus_{i=0}^n S^i_{\shO_S}(\g[1])
% \simeq \oplus_{i=0}^n \wedge^i_{\shO_S}(\g)[i].
%\end{equation}
%\end{dfn*}
%
%\begin{rmk}\label{rmk:Quillen:stdcpx}
%Note that the present construction is valid if we replace $\MO(S)$ by  any 
%$k$-linear symmetric monoidal category $(\calC,\otimes_{\calC})$.
%\end{rmk}

%%%%%%%%%%%%%%%%%%%%%%%%%%%%%%%%%%%%%%%%%%%%%%%%%%%%%%%%%%%%%%%%%%%%%%
\subsection{Homotopy property of Chevalley complex}
\label{subsec:Quillen:hot}

We discuss the homotopy property of the Chevalley complex.
As in the previous subsection, 
let $\M$ be an abelian $k$-linear monoidal category.
Denote by $\CM$ the dg category of complexes in $\M$ as before.

Let us recall the notion of filtered quasi-isomorphisms of complexes.
A \emph{filtered complex} in $\M$ is a complex $C$ in $\M$ 
with an increasing filtration $C_{\bullet}$.
Denote by $\CFM$ the category of filtered complexes $C$ in $\M$ 
such that  $C_n = 0$ for $n \ll 0$ and $C=\cup_i C_i$,
Morphisms in $\CFM$ are those of complexes respecting the filtrations.
A morphism $f:C \to C'$ of filtered complexes 
is called a \emph{filtered quasi-isomorphism} 
if the induced morphism $\gr_i(f):\gr_i(C) \to \gr_i(C')$
on the associated graded is a quasi-isomorphism for any $i$.

Hereafter we use  

\begin{dfn}\label{dfn:Ho}
Denote  by $\Ho\catC$ the homotopy category of 
a closed model category $\catC$.
\end{dfn}

As is well-known, 
the dg category of complexes is a closed model category
with weak equivalences being filtered quasi-isomorphisms and fibrations 
begin surjective morphisms.
The dg category of filtered complexes is also a closed model category
with weak equivalences being filtered quasi-isomorphisms and fibrations 
those morphisms $f$ such that $\gr(f)$ is surjective.

Let us consider the functor $L \mapsto C(L)$ of 
associating the Chevalley complexes to dg Lie algebras $L$ in $\M$.
As explained in the previous subsection, $C(L)$ has a filtration 
so that $C(L) \in \CFM$.
%Now we want to consider the source category.

%Denote by $\opLie(\CM)$ the category of Lie algebras in $\CM$
%(see also Notation in \S\ref{sect:intro}). 
%Namely $\opLie(\CM)$ is the category of dg Lie algebras in $\M$.
%$\opLie(\CM)$ has a closed model category with 
%weak equivalences being quasi-isomorphisms and fibrations 
%those morphisms $f$ such that $\gr(f)$ is surjective.
%In particular, we have the homotopy category $\Ho\opLie(\CM)$.
%
%Let us introduce the following subcategory of $\opLie(\CM)$.

\begin{dfn}\label{dfn:dgla}
Denote by $\opLie(\CM)$ 
the category of dg Lie algebras $L$ in $\M$ such that 
\begin{enumerate}
\item 
every component $L^n$ of the complex $L$ is flat.
Namely, the functor $L^n \otimes_{\M} -$ is exact.
\item
$\calH^n(L)=0$ for $n \gg 0$.
\end{enumerate}
A morphism in $\opLie(\CM)$ 
is that in $\opLie(\CM)$ respecting the Lie brackets.
A morphism $f$ is called a quasi-isomorphism 
if it is a quasi-isomorphism in $\CM$.
\end{dfn}

%Any morphism $f: L \to L'$ in $\opLie(\CM)$ induces a morphism 
%$C(f): C(L) \to C(L')$ in $\CM$.
%By construction $C(L),C(L') \in \CFM$ and 
%$C(f)$ respects the filtration \eqref{eq:Quillen:filtr}.
%Thus $C(f)$ is actually in $\CFM$. 
Now we have

\begin{lem*}%\label{fct:C:fqis}
%If $f$ is a filtered quasi-morphism in $\opLie{\CFM}$
%then the induced morphism $C(f)$ is a filtered quasi-isomorphism.
The functors $\opLie(\CM) \to \CFM$ given by $L \mapsto C(L)$ 
and $L \mapsto \oln{C}(L)$
sends quasi-isomorphism to  filtered quasi-isomorphisms.
\end{lem*}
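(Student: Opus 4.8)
The key observation is that the Chevalley complex carries a filtration whose associated graded is computable directly in terms of the underlying complex $L$ alone, so that a quasi-isomorphism $f:L\to L'$ induces on $\gr$ a map built functorially out of $f$. Concretely, by \eqref{eq:Quillen:filtr} the filtration on $C(L)$ satisfies
\[
 \gr_i C(L) \;=\; C(L)_i/C(L)_{i-1} \;\simeq\; \Sym^i(L[1]) \;\simeq\; \wedge^i(L)[i],
\]
with zero differential coming from $d''$ (which raises filtration degree) and only the internal differential $d'$, i.e.\ the differential induced by $d_L$ on $\wedge^i(L)[i]$. Thus $\gr_\bullet C$ is, as a functor $\opLie(\CM)\to\CM$, nothing but $L\mapsto \bigoplus_{i\ge 0}\wedge^i(L)[i]$ equipped with the differential induced from $d_L$; and $\gr_\bullet\oln C$ is the same sum with the index running over $i\ge 1$. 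So it suffices to show: if $f:L\to L'$ is a quasi-isomorphism of flat complexes in $\M$, then $\wedge^i(f)[i]:\wedge^i(L)[i]\to\wedge^i(L')[i]$ is a quasi-isomorphism for every $i$. The shift $[i]$ is irrelevant, so the claim reduces to showing $\wedge^i(f):\wedge^i(L)\to\wedge^i(L')$ is a quasi-isomorphism.

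To prove the latter, I would first pass from $\wedge^i$ to $T^i=(-)^{\otimes i}$. Since $k$ has characteristic $0$ and the $\frkS_i$-action on $T^i(L)$ realizes $\wedge^i(L)$ as a direct summand (via the antisymmetrization idempotent $\tfrac1{i!}\sum_{\sigma}\sgn(\sigma)\sigma$, using the commutator \eqref{eq:commutator} with its Koszul signs), and this splitting is natural in $L$, a map of complexes that is a quasi-isomorphism on $T^i$ is automatically one on the direct summand $\wedge^i$. For $T^i$, the standard argument is by induction on $i$: writing $T^i(L)=T^{i-1}(L)\otimes_\M L$, flatness of each component $L^n$ (hypothesis (1) in Definition~\ref{dfn:dgla}), together with the fact that a tensor product of complexes with a fixed complex of flat objects preserves quasi-isomorphisms — this is the usual spectral-sequence / filtration-by-degree argument, using the boundedness-above hypothesis (2) $\calH^n(L)=0$ for $n\gg0$ so that the filtration is exhaustive and the spectral sequence converges — shows $T^{i-1}(f)\otimes_\M \id_L$ and $\id_{T^{i-1}(L')}\otimes_\M f$ are quasi-isomorphisms, whence $T^i(f)$ is by two-out-of-three. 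One must be slightly careful that the relevant convergence/flatness statements hold in the abstract abelian monoidal category $\M$ rather than in $k$-vector spaces; this is where one invokes that $L^n\otimes_\M -$ is exact and that $\M$ is $k$-linear abelian, so the Künneth-type spectral sequence is available.

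Assembling: $\gr_i(C(f))$ and $\gr_i(\oln C(f))$ are, up to the natural isomorphism \eqref{eq:decalage}, the maps $\wedge^i(f)[i]$, hence quasi-isomorphisms for all $i$; by definition this means $C(f)$ and $\oln C(f)$ are filtered quasi-isomorphisms, so the functors $L\mapsto C(L)$ and $L\mapsto\oln C(L)$ send quasi-isomorphisms to filtered quasi-isomorphisms, as claimed. The main obstacle is the second step — verifying that $T^i(-)$, and hence $\wedge^i(-)$, is homotopy-invariant for flat bounded-above complexes in the general category $\M$; once that is in place the reduction via the filtration is purely formal. A minor point to handle with care is bookkeeping the Koszul signs in \eqref{eq:decalage} and in the antisymmetrizer so that the identification of $\gr C(L)$ with $\bigoplus_i\wedge^i(L)[i]$ is genuinely an isomorphism of complexes (with the $d''$-part killed); but this has effectively already been recorded in the construction of $C(L)$ in \S\ref{subsec:jac:chv}.
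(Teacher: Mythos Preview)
Your argument is correct and is precisely the standard one: pass to the associated graded, where $d''$ vanishes and $\gr_i C(L)\simeq \wedge^i(L)[i]$ with only the internal differential; then use the characteristic~$0$ antisymmetrizer to reduce $\wedge^i$ to a natural retract of $T^i$; finally show $T^i$ preserves quasi-isomorphisms by induction, using flatness of the components and the cohomological boundedness for convergence. The paper itself gives no proof at all --- it simply writes ``The proof is as in \cite[\S5.1.4 Lemma]{HS} so we omit it'' --- and your outline is exactly the argument one finds there, so there is nothing to compare.
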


The proof is as in \cite[\S5.1.4 Lemma]{HS} so we omit it.
Thus the functor $L \mapsto C(L)$ descends to the homotopy category 
and we obtain

\begin{cor*}
There are functors of homotopy categories  
\[
 C, \oln{C}: \, 
 \Ho\opLie(\CM) \longto \Ho\CFM,\quad
 L \longmapsto C(L), \ \oln{C}(L).
\]
\end{cor*}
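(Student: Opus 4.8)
The plan is to obtain both functors directly from the preceding Lemma by invoking the universal property of the homotopy category of a closed model category as the localization at its class of weak equivalences. Recall that for a closed model category $\catC$ the canonical functor $\gamma_{\catC}\colon \catC \to \Ho\catC$ is initial among functors out of $\catC$ that send weak equivalences to isomorphisms. Consequently, if $F\colon \catC \to \catC'$ is a functor between closed model categories that carries weak equivalences to weak equivalences, then $\gamma_{\catC'}\circ F$ inverts the weak equivalences of $\catC$ and therefore factors uniquely as $\gamma_{\catC'}\circ F = \bar F \circ \gamma_{\catC}$ through a functor $\bar F\colon \Ho\catC \to \Ho\catC'$.

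To apply this I first fix the model structures. On $\CFM$ I use the structure recalled above, whose weak equivalences are the filtered quasi-isomorphisms. On $\opLie(\CM)$ I use the model structure whose weak equivalences are the quasi-isomorphisms of Definition \ref{dfn:dgla} and whose fibrations are the degreewise surjections; its existence is obtained by transfer along the forgetful functor to $\CM$, and this is the step where the standing hypothesis $\mathrm{char}\,k=0$ enters, the flatness and bounded-above conditions in Definition \ref{dfn:dgla} ensuring that the relevant constructions stay inside the category. If one prefers not to invoke this model structure, it suffices to interpret $\Ho\opLie(\CM)$ as the plain categorical localization of $\opLie(\CM)$ at quasi-isomorphisms, and the argument below is unchanged.

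Now the Lemma states exactly that the functors $L \mapsto C(L)$ and $L \mapsto \oln{C}(L)$ from $\opLie(\CM)$ to $\CFM$ take quasi-isomorphisms to filtered quasi-isomorphisms, that is, weak equivalences to weak equivalences. Hence the composites $\gamma_{\CFM}\circ C$ and $\gamma_{\CFM}\circ \oln{C}$ invert quasi-isomorphisms, and by the universal property of the first paragraph they descend to functors $C,\oln{C}\colon \Ho\opLie(\CM) \to \Ho\CFM$ with $C\circ\gamma = \gamma_{\CFM}\circ C$ and likewise for $\oln{C}$, where $\gamma$ denotes the localization functor of $\opLie(\CM)$. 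This is the assertion. The only genuine point requiring attention is the one flagged above — pinning down a closed model structure on $\opLie(\CM)$ so that the symbol $\Ho\opLie(\CM)$ is legitimate in the sense of Definition \ref{dfn:Ho}; everything else is formal.
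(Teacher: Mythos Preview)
Your argument is correct and is exactly the approach the paper takes: the Corollary is obtained immediately from the preceding Lemma by the observation that a functor sending weak equivalences to weak equivalences descends to the homotopy categories. Your extra care in flagging the need for a model structure on $\opLie(\CM)$ (or interpreting $\Ho$ as the localization at quasi-isomorphisms) is a point the paper leaves implicit.
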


%%%%%%%%%%%%%%%%%%%%%%%%%%%%%%%%%%%%%%%%%%%%%%%%%%%%%%%%%%%%%%%%%%%%%%
\subsection{Jacobi complex and the universal deformation ring}
\label{subsec:jac:dfn}

In \cite{R1} the Jacobi complex is defined as 
the Chevalley complex of a dg Lie algebra of sheaves on the Ran space 
$\R(X)$ assuming $X$ to be a Hausdorff space.
Here we introduce an analogue for the scheme setting.

Let $X$ be a $k$-scheme.
Recall the convolution product $\otimes^*$ on the category 
$\MO(X^{\catS})$ of $!$-$\shO$-modules in Definition \ref{dfn:conv}.
The resulting tensor category $\MO(X^{\catS})^*$ 
is abelian and $k$-linear.
It has a unit with $\shO := (\shO_{X^I})$ 
with obvious $\theta^{(\pi)}$'s.
%It naturally extends to a tensor structure on 
%the category $\CMO(X^{\catS})$ of complexes of $!$-$\shO$-modules.

%Thus for $V \in \CMO(S^{\catS})$ we have a convolution tensor power 
%\[
% T^n(V) := V \otimes^* \cdots \, \otimes^* V \quad \text{($n$-times product)}.
%\]
%The tensor category $\CMO(S^{\catS})^*$ 
%is symmetric under the same commutator as \eqref{eq:commutator},
%and $\frkS_n$ acts on $T^n(V)$ via this commutator.
%Similarly as in the previous subsection,
%denote by $S^n(V)$ and $\wedge^n(V)$
%the coinvariant and co-anti-invariant spaces respectively.
%We also have a dg coalgebra structure on 
%\[
% C'(V) := \Sym(V[1]) = \oplus_{n\ge0} \Sym^n(V)
%\]
%together with the filtration as \eqref{eq:Quillen:filtr}.

%Under the tensor structure $\otimes^*$, 
%one can consider a Lie algebra object in $\CMO(S^{\catS})^*$.
%%Denote the category of such Lie objects by $\Dgla(S^{\catS})$.
%Let us call such an object 
%\emph{a dg $\shO$-Lie algebra on $S^{\catS}$}.
%As in \S\ref{subsec:Quillen} 
%we can construct Quillen standard complexes for such objects.
%%$\Dgla(S^{\catS})$.

Now we can apply the argument in the previous \S\ref{subsec:jac:chv}  to 
the category $\M = \MO(X^{\catS})^*$.
%Let us simply write 
%\[
% \opBV_u\bigl(X^{\catS}\bigr) := \opBV_u\bigl(\CMO(X^{\catS})^*\bigr),\quad
% \oln{\opBV}\bigl(X^{\catS}\bigr)
%                              := \oln{\opBV}\bigl(\CMO(X^{\catS})^*\bigr),
%\]
A \emph{dg Lie $\shO$-algebra on $X^{\catS}$}
is the complex of $!$-$\shO$-modules in $X^{\catS}$ 
with the Lie algebra structure.

\begin{dfn}
For a dg Lie $\shO$-algebra $L$ on $X^{\catS}$,
we have cocommutative dg coalgebras 
\[
 C(L) = \bigl(\Sym(L[1]),d,\Delta\bigr), \quad
 \oln{C}(L) = \left(\Sym^{\ge1}(L[1]),d,\Delta\right),
\]
where the differential $d$ is given by the formula \eqref{eq:Quillen:d}
and the coproduct $\Delta$ in \eqref{eq:chv:coprod}.
We always consider them with the filtrations 
$C(L)_n := \oplus_{i=0}^n \Sym^i(L[1])$ and 
$\oln{C}(L)_n := \oplus_{i=1}^n \Sym^i(L[1])$.
We call $C(L)$ the \emph{Chevalley complex} and  
$\oln{C}(L)$ the \emph{reduced Chevalley complex} of $L$.
\end{dfn}

We can also discuss the homotopy property as 
in the last paragraph of \S\ref{subsec:BV:chv},
Let us start with 

\begin{dfn*}
Denote by $\opLie(X^{\catS})$ 
the category of dg Lie $\shO$-algebras on $X^{\catS}$
consisting of objects $L=(L_I)$ such that 
\begin{enumerate}
\item 
every component $L_I^n$ of the complex $L_I$ 
is $\shO_{X^I}$-flat for any $I$,
\item
we have the vanishing of the 
the cohomology sheaf $\calH^n(L_I)=0$ for $n \gg 0$ and any $I$.
\end{enumerate}
\end{dfn*}

%A morphism $f: \g \to \frkh$ of dg $\shO$-Lie algebras 
%induces a morphism $C(f):C(\g) \to C(\frkh)$ 
%in the category $\CFMO(S^{\catS})$.
%Now we have a similar claim as Fact \ref{fct:C:fqis}.
%\begin{fct}[{\cite[\S5.1.4 Lemma]{HS}}]
%\label{fct:C:fqis}
%If $f$ is a quasi-morphism in $\Dgla(S)$,
%then the induced morphism $C_{\shO_S}(f)$ 
%is a filtered quasi-isomorphism.

Now we have the next claim
whose proof is the same as in \cite[\S5.1.4 Lemma]{HS}.

\begin{lem*}
If $f$ is a quasi-morphism in $\opLie(X^{\catS})$,
then the image $C(f)$ under the functor $C$ is a filtered quasi-isomorphism.
\end{lem*}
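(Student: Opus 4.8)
The plan is to reduce this statement to the abstract Lemma already proved in \S\ref{subsec:Quillen:hot} for the category $\M = \MO(X^{\catS})^*$, so that nothing new has to be checked except that this tensor category satisfies the hypotheses of that setting. First I would observe that $\MO(X^{\catS})^*$ is an abelian $k$-linear symmetric monoidal category (stated right after Definition \ref{dfn:conv} and in the discussion preceding the present Lemma), with unit $\shO = (\shO_{X^I})$; thus the constructions of \S\ref{subsec:jac:chv}, in particular the Chevalley complex $C(L)$ together with its filtration $C(L)_n = \oplus_{i=0}^n \Sym^i(L[1])$, apply verbatim. The category $\opLie(X^{\catS})$ defined just above is, by construction, exactly $\opLie(\CM)$ for $\M = \MO(X^{\catS})^*$: condition (i) there, flatness of each $L_I^n$ over $\shO_{X^I}$ for every $I$, is precisely the requirement that $L^n \otimes_{\M} -$ be exact, because the convolution product is computed componentwise out of external tensor products $\boxtimes$ and pullbacks/pushforwards along the closed diagonals $\Delta^{(\pi)}$, all of which are exact on $\shO$-flat modules; and condition (ii), vanishing of $\calH^n(L_I)$ for $n \gg 0$ and all $I$, is the componentwise version of the cohomological boundedness in Definition \ref{dfn:dgla}(ii).

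Granting this identification, the conclusion is immediate from the Lemma of \S\ref{subsec:Quillen:hot} (whose proof follows \cite[\S5.1.4 Lemma]{HS}): the functor $L \mapsto C(L)$ from $\opLie(\CM) \to \CFM$ sends quasi-isomorphisms to filtered quasi-isomorphisms, and $\CM = \CMO(X^{\catS})^*$ here. Concretely, I would recall the mechanism of that proof: for a quasi-isomorphism $f : L \to L'$ in $\opLie(X^{\catS})$, one passes to the associated graded of the filtration, where $\gr_n C(L) \simeq \wedge^n(L)[n] \simeq \Sym^n(L[1])$ carries only the differential $d'$ induced from $d_L$ (the bracket part $d''$ raises filtration degree by one and hence vanishes on $\gr$). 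So it suffices to see that $f$ induces a quasi-isomorphism $\Sym^n(f[1])$ for each $n$; this is where the flatness hypothesis (i) enters, ensuring that $\otimes^*$-powers of a quasi-isomorphism of flat complexes remain quasi-isomorphisms, and that taking $\frkS_n$-coinvariants (which is exact in characteristic $0$, as we have assumed $k$ to have characteristic $0$ in \S\ref{sect:jac}) is compatible with this. The boundedness hypothesis (ii) is what lets one control the convergence of the filtration spectral sequence / the termwise induction, exactly as in \cite{HS}.

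The one point that genuinely needs care — and which I would flag as the main obstacle — is the verification that the convolution product $\otimes^*$ interacts correctly with the exactness and flatness requirements at the level of the $\catS^\circ$-diagrams, rather than just over a single $X^I$. Namely, $(F \otimes^* G)_I = \oplus_{\pi : I \surj \{1,2\}} F_{\pi^{-1}(1)} \boxtimes G_{\pi^{-1}(2)}$, and one must check that if each $F_I$ is $\shO_{X^I}$-flat then so is each $(F \otimes^* G)_I$, and that the functor $(-) \otimes^* G$ is exact on $!$-$\shO$-modules all of whose components are flat; this comes down to flatness being preserved under external tensor product over distinct factors, which is standard, but the bookkeeping with the structure morphisms $\theta^{(\pi)}$ and with the partial diagonals should be spelled out once. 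Since this compatibility is really the content of \S\ref{subsec:jac:chv} applied to $\M = \MO(X^{\catS})^*$ — a category we have explicitly checked is abelian, $k$-linear and symmetric monoidal — and since everything else is a transcription of the argument in \cite[\S5.1.4]{HS}, the proof of the present Lemma is the same as that of the abstract Lemma in \S\ref{subsec:Quillen:hot}, and I would simply refer to it, as the paper does.
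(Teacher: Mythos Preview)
Your proposal is correct and follows exactly the paper's approach: the paper's entire proof is the remark that it ``is the same as in \cite[\S5.1.4 Lemma]{HS}'', i.e.\ the reduction to the abstract Lemma of \S\ref{subsec:Quillen:hot} applied to $\M=\MO(X^{\catS})^*$, which is precisely what you do. Your additional verification that $\opLie(X^{\catS})$ coincides with $\opLie(\CM)$ for this $\M$, and your sketch of why componentwise $\shO_{X^I}$-flatness yields exactness of $\otimes^*$, only make explicit what the paper leaves implicit.
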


As a corollary we have 

\begin{cor}\label{cor:BV:hot}
The correspondences $L \mapsto C(L)$ and 
$L \mapsto \oln{C}(L)$ yields the functors
\[
 C,\oln{C}: \,
 \Ho\opLie\bigl(X^{\catS}\bigr) \longto 
 \Ho\CMO(X^{\catS}\bigr).
%F_n C:\, 
% \Hola(S^{\catS}) \longto \DFMO(S^{\catS}).
\]
\end{cor}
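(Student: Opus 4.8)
The plan is to deduce Corollary \ref{cor:BV:hot} from the Lemma immediately preceding it in the standard way one passes from a functor sending weak equivalences to weak equivalences to an induced functor of homotopy categories. First I would recall that both $\opLie(X^{\catS})$ and $\CMO(X^{\catS})$ (resp.\ its filtered variant $\CFMO(X^{\catS})$) carry closed model structures: on $\CMO(X^{\catS})$ one uses quasi-isomorphisms as weak equivalences and surjections as fibrations, exactly as in the previous subsection for a general abelian $k$-linear $\M$; on $\opLie(X^{\catS})$ one declares a morphism to be a weak equivalence precisely when it is a quasi-isomorphism of underlying complexes, and the flatness/bounded-above-cohomology conditions in the definition of $\opLie(X^{\catS})$ are imposed exactly so that this forms (or at least behaves like) a model category in the sense needed here, paralleling $\opLie(\CM)$. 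Since the model structure is on the filtered category $\CFMO(X^{\catS})$ with filtered quasi-isomorphisms as weak equivalences and the functors $C,\oln{C}$ land there, I would phrase the target as $\Ho\CFMO(X^{\catS})$ and then postcompose with the forgetful functor $\Ho\CFMO(X^{\catS}) \to \Ho\CMO(X^{\catS})$ that simply drops the filtration (which is well-defined since a filtered quasi-isomorphism is in particular a quasi-isomorphism).

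The key step is then purely formal: by the preceding Lemma the functor $C$ (and likewise $\oln{C}$) sends quasi-isomorphisms in $\opLie(X^{\catS})$ to filtered quasi-isomorphisms, i.e.\ it maps weak equivalences to weak equivalences. By the universal property of localization — one localizes $\opLie(X^{\catS})$ at its weak equivalences to obtain $\Ho\opLie(X^{\catS})$ — any functor inverting weak equivalences factors uniquely through $\Ho\opLie(X^{\catS})$. Applying this to the composite $\opLie(X^{\catS}) \xrightarrow{C} \CFMO(X^{\catS}) \to \Ho\CFMO(X^{\catS}) \to \Ho\CMO(X^{\catS})$ yields the desired
\[
 C, \oln{C}: \, \Ho\opLie\bigl(X^{\catS}\bigr) \longto \Ho\CMO\bigl(X^{\catS}\bigr),
\]
and these send a class $[L]$ to $[C(L)]$ and $[\oln{C}(L)]$ respectively. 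This is literally the same deduction as the Corollary in \S\ref{subsec:Quillen:hot} (``$C,\oln{C}:\Ho\opLie(\CM)\to\Ho\CFM$''), specialized to $\M=\MO(X^{\catS})^*$, so one may also simply invoke that Corollary with this choice of $\M$ together with the present Lemma to see that the flatness and cohomological-boundedness hypotheses on the $L_I$ are exactly the componentwise translation of conditions (1) and (2) in Definition \ref{dfn:dgla}.

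The main obstacle — and really the only nontrivial point — is to make sure the ambient category $\M=\MO(X^{\catS})^*$ genuinely satisfies the standing hypotheses of \S\ref{subsec:jac:chv} and \S\ref{subsec:Quillen:hot}: it must be abelian, $k$-linear, and monoidal (with unit $\shO=(\shO_{X^I})$), which was already recorded just above the statement, and one must check that the tensor product $\otimes^*$ on $\MO(X^{\catS})$ interacts correctly with the flatness condition so that, for $L\in\opLie(X^{\catS})$, the complexes $\wedge^i(L)[i]$ computing $\gr_i C(L)$ have the expected behaviour under a quasi-isomorphism $f$ — but this is precisely what the preceding Lemma (whose proof is cited from \cite[\S5.1.4 Lemma]{HS}) delivers. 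One should also note that the exterior powers $\wedge^i$ are formed with respect to $\otimes^*$, so ``$\shO_{X^I}$-flat for any $I$'' is the right componentwise hypothesis ensuring $\Delta^{(\pi)*}$ and the box products in Definition \ref{dfn:conv} preserve the relevant exactness; with that in hand the rest is the formal localization argument above. I expect this step to be routine given the machinery already set up, so the Corollary follows with no real difficulty.
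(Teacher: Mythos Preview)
Your proposal is correct and matches the paper's approach: the corollary is stated immediately after the Lemma with only the words ``As a corollary we have'', so the intended proof is exactly the formal localization argument you spell out (a functor sending weak equivalences to weak equivalences descends to homotopy categories), parallel to the Corollary in \S\ref{subsec:Quillen:hot}. Your discussion is more detailed than what the paper provides, but the underlying reasoning is the same.
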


Now we turn to the definition of the Jacobi complex.
Recall also the fully faithful embedding 
\[
 \Delta^{(\catS)}_*: \MO(X)^* \longinj \MO(X^{\catS})^*
\]
given in Lemma \ref{lem:Delta*S}.
It naturally extends to the embedding 
$\Delta^{(\catS)}_*: \CMO(X) \inj \CMO(X^{\catS})^*$. 
%of the category of complexes.

Let $\g$ be a dg Lie $\shO_X$-algebra,
namely a Lie algebra object in $\CMO(X)^*$.
The image $\Delta^{(\catS)}_*(\g) \in \CMO(X^{\catS})^*$ 
is a Lie object.
In order to ensure $\Delta^{(\catS)}_*(\g) \in \opLie(X^{\catS})^*$,
we consider 

\begin{dfn}\label{dfn:jac:Lie(X)}
Denote by $\opLie(X)$ 
the category of dg Lie $\shO_X$-algebras $\g$ such that 
\begin{enumerate}
\item 
every component $\g^n$ of the complex $\g$ 
is $\shO_{X}$-flat,
\item
$\calH^n(\g)=0$ for $n \gg 0$,
\end{enumerate}
and morphisms are those in $\CMO(X)$ respecting the Lie bracket.
\end{dfn}

If $\g \in \opLie(X)$, then we obviously have
$\Delta^{(\catS)}_*(\g) \in \opLie(X^{\catS})$.
Now we have the main definition.

\begin{dfn}\label{dfn:Jacobi}
For $\g \in \opLie(X)$, 
define the \emph{Jacobi complex} $J(\g)$ to be 
the reduced Chevalley complex of $\Delta^{(\catS)}_*(\g)$.
\[
 J(\g) := \oln{C}(\Delta^{(\catS)}_*(\g)) = 
  \bigl(\Sym^{\ge1}(\Delta^{(\catS)}_*(\g)[1]),d,\Delta\bigr).
\]
Using the filtration on the reduced Chevalley complex, define
\[
 J_n(\g) := \oln{C}\bigl(\Delta^{(\catS)}_*(\g)\bigr)_n.
\]
\end{dfn}

\begin{rmk*}
Originally in \cite{R1} the $n$-th term of the Jacobi complex of $\g$ 
is defined as the $\frkS_n$-anti-invariant part of 
the sheaf $r_{n*}(\g^{\boxtimes n})$ on $\R(X)$. 
Here $r_n:X^n \to \R(X)_n \subset \R(X)$ is the natural projection.
In \cite{R1} $X$ is assumed to be Hausdorff 
so that this definition cannot be compared to ours strictly.
However let us explain that these two definitions 
are essentially the same.

The relation %$r_{m+n}=u_{m,n}\circ(r_m \times r_n)$ in 
\eqref{eq:r=urr}
yields $r_n = v_{1}^n \circ (r_1 \times \cdots \times r_1)$,
where $v_1^n$ is given in \eqref{eq:v}.
Then by the description \eqref{eq:rmk:conv} of $\otimes^*$ 
for Hausdorff $X$ we have 
$r_{n*}(\g^{\boxtimes n}) = 
 (r_{1*} \g)\otimes^* \cdots \otimes^* (r_{1*} \g)$.
In our case  $r_{1*}$ corresponds to  $\Delta^{(S)}_*$,
and taking the anti-invariant part is covered by considering 
$S(\g[1]) \simeq \oplus_n \wedge^n(\g)[n]$.
%Finally the differential in \cite{R1},
%where only a non-dg Lie $\shO_S$-algebra is considered,
%is the same as $d''$ in \eqref{eq:Quillen:d}.
Thus the two definitions are basically the same one.
\end{rmk*}

Note that we have the commutativity of the composition of functors 
$C \circ \Delta^{(\catS)}_* = \Delta^{(\catS)}_* \circ C$.

Let us briefly mention the homotopy property of Jacobi complex 
in a general setting.
A quasi-isomorphism $\g \to \frkh$ in $\opLie(X)$ gives 
a quasi-isomorphism $\Delta^{(S)}_*(\g) \to \Delta^{(S)}_*(\frkh)$.
Then Corollary \ref{cor:BV:hot} gives

\begin{lem}\label{lem:Jacobi:hot}
The functors $\g \mapsto J(\g)$ and $\g \mapsto J_n(\g)$ from $\opLie(X)$
induce those from $\Ho\opLie(X)$.
\end{lem}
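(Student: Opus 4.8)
The plan is to deduce Lemma \ref{lem:Jacobi:hot} directly from Corollary \ref{cor:BV:hot} together with the functoriality of $\Delta^{(\catS)}_*$. First I would observe that the Jacobi complex functor $J$ is by definition the composition $\oln{C}\circ\Delta^{(\catS)}_*$, and $J_n$ is the composition $(\oln{C}(-))_n\circ\Delta^{(\catS)}_*$. So it suffices to check (i) that $\Delta^{(\catS)}_*$ sends $\opLie(X)$ into $\opLie(X^{\catS})$ and sends quasi-isomorphisms to quasi-isomorphisms, so that it descends to a functor $\Ho\opLie(X)\to\Ho\opLie(X^{\catS})$; and (ii) that $\oln{C}$ and its filtered pieces descend to the homotopy categories, which is exactly the content of Corollary \ref{cor:BV:hot} (applied to the reduced version $\oln{C}$, which satisfies the same filtered-quasi-isomorphism statement as $C$ by the Lemma preceding that corollary).

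For step (i), the containment $\Delta^{(\catS)}_*(\g)\in\opLie(X^{\catS})$ for $\g\in\opLie(X)$ is already recorded in the sentence just before Definition \ref{dfn:Jacobi}: since $(\Delta^{(\catS)}_*\g)_I=\Delta^{(I)}_*\g$ and $\Delta^{(I)}$ is a closed embedding, $\shO_{X^I}$-flatness of each $\g^n$ transports to $\shO_{X^I}$-flatness of each $(\Delta^{(\catS)}_*\g)_I^n$, and the cohomology sheaves $\calH^n(\Delta^{(I)}_*\g)=\Delta^{(I)}_*\calH^n(\g)$ vanish for $n\gg0$ because $\Delta^{(I)}_*$ is exact on quasi-coherent sheaves along a closed immersion. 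That $\Delta^{(\catS)}_*$ preserves quasi-isomorphisms is the remark stated immediately before the Lemma: a quasi-isomorphism $\g\to\frkh$ in $\opLie(X)$ has componentwise $\Delta^{(I)}_*$ applied, and $\Delta^{(I)}_*$ is exact, hence $\Delta^{(\catS)}_*(\g)\to\Delta^{(\catS)}_*(\frkh)$ is a quasi-isomorphism in $\opLie(X^{\catS})$. Therefore, by the universal property of localization (inverting quasi-isomorphisms), $\Delta^{(\catS)}_*$ induces a functor on the homotopy categories.

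Composing the induced functor $\Ho\opLie(X)\to\Ho\opLie(X^{\catS})$ with the functors $C,\oln{C}\colon\Ho\opLie(X^{\catS})\to\Ho\CMO(X^{\catS})$ of Corollary \ref{cor:BV:hot}, and using the identity $C\circ\Delta^{(\catS)}_*=\Delta^{(\catS)}_*\circ C$ (and its reduced analogue), one obtains the asserted functors $\g\mapsto J(\g)$ and $\g\mapsto J_n(\g)$ on $\Ho\opLie(X)$. For the filtered pieces $J_n$ one notes that $\Delta^{(\catS)}_*$ preserves the filtration on the Chevalley complex strictly (it is $\Sym$ applied componentwise), so the filtered-quasi-isomorphism statement passes to each $\gr$-piece as needed.

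I do not expect a serious obstacle here; the lemma is essentially a bookkeeping corollary. The only point requiring a little care is making sure that $\oln{C}$ — not just $C$ — descends to the homotopy category, i.e.\ that the Lemma preceding Corollary \ref{cor:BV:hot} genuinely covers the reduced version; but that Lemma is stated for both $C$ and $\oln{C}$ (``whose proof is the same as in \cite[\S5.1.4 Lemma]{HS}''), so this is already available. A secondary point is verifying that $\Delta^{(\catS)}_*$ lands in the subcategory $\opLie(X^{\catS})$ with the flatness and bounded-cohomology conditions, but as explained above this is immediate from exactness of pushforward along the closed diagonal embeddings.
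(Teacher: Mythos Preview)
Your argument is precisely the paper's: the proof there consists of the sentence immediately preceding the lemma (that $\Delta^{(\catS)}_*$ sends quasi-isomorphisms to quasi-isomorphisms) together with an appeal to Corollary~\ref{cor:BV:hot}, and you have simply fleshed this out with the appropriate references.

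One caveat on a side justification: your claim that $\Delta^{(I)}_*\g^n$ is $\shO_{X^I}$-flat ``because $\Delta^{(I)}$ is a closed embedding'' is not correct as written --- pushforward along a nontrivial closed immersion destroys flatness (already $\Delta^{(I)}_*\shO_X$, the structure sheaf of the diagonal, is not $\shO_{X^I}$-flat). The paper does not justify this step either, simply declaring the containment $\Delta^{(\catS)}_*(\g)\in\opLie(X^{\catS})$ to be obvious. The flatness that actually matters for the Hinich--Schechtman argument underlying Corollary~\ref{cor:BV:hot} is flatness with respect to the convolution product $\otimes^*$ (so that the graded pieces $\Sym^n(-[1])$ preserve quasi-isomorphisms), and this does follow from $\shO_X$-flatness of the $\g^n$ via the description of $\otimes^*$ on objects in the image of $\Delta^{(\catS)}_*$; so the conclusion stands, but you should drop the incorrect rationale.
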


Recall the global section functor $\Gamma(X^{\catS},-)$ 
in \eqref{eq:Gamma(X^S)}.
The coproduct $\Delta$ on $J_n(\g)$ induces a ring structure on 
\[
 R^u_n(\g) := k \oplus \left(\Gamma(X^{\catS},J_n(\g))\right)^*, 
\]
making $R^u_n(\g)$ an Artin local $k$-algebra of exponent $n$. 
Since the filtration and $\Delta$ is compatible 
in the sense of Remark \ref{rmk:chv:copro-filtr},
$(R^u_n(\g))_{n\ge1}$ form a direct system of Artin algebras,
and it has the limit
\[
 R^u(\g) := k \oplus \left(\Gamma(X^{\catS},J(\g))\right)^*
         \simeq \varinjlim_n R^u_n(\g).
\]
Note that seen as an object of $\CMO(X^{\catS})$, 
$J_n(\g)$ satisfies the admissible condition in Definition \ref{dfn:adm}.
so we may rewrite
\[
 R^u_n(\g) = k \oplus \left(\Gamma(\R(X),J_n(\g))\right)^*.
\]

Following \cite{R1}, we name 

\begin{dfn*}
For $\g \in \Ho\opLie(X)$, 
we call the Artin algebra $R^u(\g)$
the \emph{universal deformation algebra} of $\g$.
$R^u_n(\g)$ will be called 
the \emph{$n$-th universal deformation algebra} of $\g$.
\end{dfn*}

%%%%%%%%%%%%%%%%%%%%%%%%%%%%%%%%%%%%%%%%%%%%%%%%%%%%%%%%%%%%%%%%%%%%%%%
\subsection{Jacobi complex with coefficients and moduli module}

Recall that for a Lie algebra $\g$ and a $\g$-module $V$
we have the Chevalley complex $C(\g,V)$ with coefficients in $V$.
Thus we can make a similar argument  to obtain 
the Jacobi complex with coefficients in a module.

Let $\M$ be an abelian $k$-linear tensor category as before,
and $L \in \CM$ be a dg Lie algebra in $\M$.
Let also $M$ be a dg $L$-module in $\M$.
Namely $M \in \CM$ with a $\g$-action $\rho:\g \otimes_{\M} V \to V$.
As in the classical case, define the 
\emph{reduced Chevalley complex of $L$ with coefficients in $V$} to be 
\[
 \oln{C}(L,M) := \left(\Hom_{\M}(\oln{C}(L),M),\partial\right).
\]
The differential $\partial$ is given by 
\[
 \partial(g) := \oln{\partial}(g) - (-1)^{|g|}g d
\]
for $g \in \Hom_{\M}(\oln{C}(L),M)$.
Here $d$ is the differential of the complex $\oln{C}(L)$,
and $\oln{\partial}(g)$ is given by the composition
\[ 
 \oln{C}(L) \xrightarrow{\ \Delta \ }
 \oln{C}(L) \otimes_{\M} \oln{C}(L) \xrightarrow{\ \pi \otimes g \ }
 L[1] \otimes_{\M} V \xrightarrow{\ \rho \ } V[1],
\]
where the morphism $\pi: \oln{C}(L) \to L[1]=\Sym^1(L[1])$ 
is the canonical projection.

%The product $m$ on $\oln{C}(L)$ yields an action
%$\oln{C}(L) \otimes \oln{C}(L,V) \to \otimes \oln{C}(L,V)$.

We want to apply this argument to the situation 
in the previous \S\ref{subsec:jac:dfn}.
So let $X$ be a $k$-scheme of finite cohomological dimension,
and $\g \in \opLie(X)$.
Also let $V$ be a $\g$-module in $\CMO(X)^*$,
namely $V$ is a complex of $\shO_X$-modules 
with a $\g$-action $\g \otimes_{\shO_X} V \to V$.
%
%Recall that a $\g$-module structure on $V$ is equivalent to 
%a Lie algebra structure on $\g \oplus V$ such that 
%both the inclusion $\g \inj \g \oplus V$ 
%and the quotient $\g \oplus V \surj \g$ are morphisms of Lie algebras
%and the restriction of the Lie bracket on $V \otimes V$ vanishes.
%
%In order to apply the previous argument to $\g \oplus V$,
%we must assume $\g \oplus V \in \opLie(X)$.
%It is sufficient to consider
%
%\begin{dfn*}
%For $\g \in \opLie(X)$, denote by $\Mod(\g)$
%the full subcategory of $\CMO(X)$ consisting of 
%$\g$-modules $V$ such that 
%\begin{enumerate}
%\item 
%every component $V^n$ of the complex $V$ is $\shO_{X}$-flat,
%\item
%$\calH^n(V)=0$ for $n \gg 0$.
%\end{enumerate}
%\end{dfn*}
%
We set $\M = \MO(X^{\catS})^*$, $L = \Delta^{(\catS)}_*(\g)$
and $M = \Delta^{(\catS)}_*(V)$.
Since $\Delta^{(\catS)}_*$ is a tensor functor by Lemma \ref{lem:Delta*S},
$M$ is an $L$-module.
Thus we have the complex
\[
%J(\g,V) :=  \oln{C}\bigl(\Delta^{(\catS)}_* (\g \oplus V)\bigr),
%\quad
 J_n(\g,V) := 
 \oln{C}\bigl(\Delta^{(\catS)}_*(\g), \Delta^{(\catS)}_*(V)\bigr)_n.
\]
%where $\g \oplus V \in \Lie(X)$ is 
%the Lie algebra respecting the $\g$-module structure as explained above.
It has a coproduct induced by $\Delta$ on the $\g$-factor.
It is natural to name 

\begin{dfn}\label{dfn:jac:wc}
$J_n(\g,V)$ is called 
the $n$-th \emph{Jacobi complex with coefficients in $V$}.
\end{dfn}

The coproduct structure yields 
that the cohomology sheaf $\calH^0(J_n(\g,V))$ on $X^{\catS}$ 
is a sheaf of $R^u_n(\g)$-modules.

Let us define $k$-modules
\[
 M_n(\g,V) := \Gamma\left(X^{\catS},J_n(\g,V)\right).
\]
It is naturally an $R^u_n(\g)$-module.

%Following \cite[\S4.1.6]{BD}, 
%let us spell out the definition in a general setting.
%So let $\M$ be an abelian $k$-linear tensor category 
%as in \S\ref{subsec:BV:dfn},
%and $C$ be a BV algebra in $\M$.
%
%\begin{dfn*}
%A BV $C$-module is a complex $M \in \CM$ together with 
%a BV algebra structure $(m,c)$ on $C \oplus M$ such that 
%\begin{enumerate}
%\item 
%both the inclusion $C \inj C \oplus M$ 
%and the quotient $C \oplus M \surj C$ are morphisms of BV algebras,
%\item
%$m$, hence also $c$, vanishes on $M \subset C \oplus M$.
%\end{enumerate}
%\end{dfn*}
%
%Recall the pair of the functor $\oln{C}:L \mapsto \oln{C}(L)$ and 
%its left adjoint $C \mapsto C_1[-1]$.
%Given $L \in \opLie(\CM)$ and an $L$-module $M$,
%we have the Chevalley complex $C(L\oplus M)$ 
%with $L \oplus M \in \opLie(\CM)$ coming from the $L$-module structure.
%Since $\oln{C}$ is exact, $C(L \oplus M) \simeq C(L) \oplus C(M)$
%we have that inclusion $C(L) \inj C(L \oplus M)$ and the quotient 
%$C(L \oplus M) \surj C(L)$.
%These are morphisms of BV algebras 

\begin{dfn*}
We call the $R^u_n(\g)$-module $M_n(\g,V)$
the $n$-th \emph{moduli module} of $V$.
\end{dfn*}

%%%%%%%%%%%%%%%%%%%%%%%%%%%%%%%%%%%%%%%%%%%%%%%%%%%%%%%%%%%%%%%%%%%%%%
%%%%%%%%%%%%%%%%%%%%%%%%%%%%%%%%%%%%%%%%%%%%%%%%%%%%%%%%%%%%%%%%%%%%%%
\section{Higher Kodaira-Spencer maps}
\label{sect:hks:R}

%We compare the result of \cite{R1} and \cite{HS}.

%%%%%%%%%%%%%%%%%%%%%%%%%%%%%%%%%%%%%%%%%%%%%%%%%%%%%%%%%%%%%%%%%%%%%%
\subsection{The statement}

Let $k$ be  a field  of characteristic $0$, and 
$X$ be a smooth scheme over $k$ 
which is assumed to be separated and quasi-compact.
We study the Jacobi complex of the tangent sheaf $\Theta_X$,
namely 
\[
 J := J(\Theta_X) \supset  J_n := J_n(\Theta_X) 
\]
given by Definition \ref{dfn:Jacobi}.
Let us denote the corresponding universal deformation algebra by 
\[
 R^u_n := R^u_n(\Theta_X)  = k \oplus \left(\Gamma(\R(X),J_n)\right)^*, 
\]
which is an Artin local $k$-algebra of exponent $n$. 

In this section we explain the following result of \cite{R1}.

\begin{thm}\label{thm:hks}
Assume $H^0(X,\Theta_X)=0$.
\begin{enumerate}
\item 
For each $n \in \bbZ_{\ge1}$,
there is a flat deformation $\X^u_n$ of $X$ over $\Spec(R^u_n)$.
The data $\{(\X^u_n,R^u_n)\}_{n \ge1}$  
form a direct system with the limit $(\X^u,R^u)$, 
which is a flat formal deformation $\X^u$ over $\Spf(R^u)$.

\item
$X^u_n$ is universal in the following sense.
For any flat deformation $\X_n$ of $X$ 
over an Artin local $k$-algebra $R_n$ of exponent $n$,
there is a ring homomorphism $\alpha_n: R^u_n \to R_n$
such that $\X_n$ is the pull-back of $X$ by $\alpha_n$.
\end{enumerate}
\end{thm}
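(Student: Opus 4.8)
The plan is to recognize that the Jacobi complex $J(\Theta_X)$ is a concrete model for the homotopy Lie algebra (more precisely, its Chevalley--Eilenberg cochain complex) controlling deformations of $X$, and to extract the universal deformation by dualizing its global sections. First I would record the classical fact that the dg Lie algebra governing deformations of a smooth separated scheme $X$ is $R\Gamma(X,\Theta_X)$, so that the deformation functor on Artin local $k$-algebras $R_n$ of exponent $n$ is $A \mapsto \MC\bigl(\frkm_{R_n}\otimes R\Gamma(X,\Theta_X)\bigr)/\text{gauge}$. The hypothesis $H^0(X,\Theta_X)=0$ kills the degree $0$ (automorphism) part, so there is no gauge ambiguity and the functor is genuinely prorepresentable; the prorepresenting object is dual to the Chevalley--Eilenberg complex of $R\Gamma(X,\Theta_X)$. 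The key point is then that, by Lemma \ref{lem:Jacobi:hot} together with the fact that $R\Gamma(X^{\catS},-)$ computes $R\Gamma$ of the Ran-space sheaf and is a tensor functor with respect to $\otimes^*$ (valid since $X$ is quasi-compact), one has a filtered quasi-isomorphism of dg coalgebras
\[
 R\Gamma\bigl(X^{\catS},J_n(\Theta_X)\bigr) \simeq \oln{C}_n\bigl(R\Gamma(X,\Theta_X)\bigr),
\]
so that $R^u_n = k\oplus\bigl(\Gamma(X^{\catS},J_n(\Theta_X))\bigr)^*$ is, up to the pro-Artin completion, the $n$-truncated Chevalley cochain algebra of the deformation-controlling dg Lie algebra.

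Next I would construct the deformation $\X^u_n$ itself. One way: use that $\calH^0(J_n(\g,V))$ for $V=\Theta_X$ (or rather the relevant structure sheaf version) is a sheaf of $R^u_n$-modules, as noted after Definition \ref{dfn:jac:wc}, and glue. More robustly, I would argue functor-theoretically: by the previous paragraph $R^u_n$ prorepresents the restriction to exponent-$\le n$ Artin algebras of the deformation functor of $X$, so the identity map $R^u_n \to R^u_n$ corresponds to a distinguished flat deformation $\X^u_n$ over $\Spec R^u_n$ — this is Yoneda. Part (2) is then immediate: a flat deformation $\X_n$ over $R_n$ is, by the (pro)representability just established, classified by a unique ring homomorphism $\alpha_n\colon R^u_n \to R_n$ with $\X_n \cong \alpha_n^*\X^u_n$. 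For part (1)'s direct-system / formal-limit claim, I would invoke Remark \ref{rmk:chv:copro-filtr}: the filtration on the Chevalley complex is compatible with the coproduct $\Delta$, hence the surjections $R^u_{n+1}\surj R^u_n$ are ring maps and the $\X^u_n$ are compatible under pullback, so $(\X^u,R^u)=\varprojlim(\X^u_n,R^u_n)$ is a flat formal deformation over $\Spf R^u$ (flatness of the limit follows from flatness at each finite stage plus the Artin-algebra transition maps being surjective with nilpotent kernel).

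The main obstacle, and the step deserving real care, is the identification in the first paragraph: showing that $\Gamma(X^{\catS},J_n(\Theta_X))$ — built from the pushforwards $\Delta^{(\catS)}_*$ of $\Theta_X$ along all diagonals and glued via $\otimes^*$ — actually computes (the truncated, linear-dual of) the Chevalley complex of the \emph{derived} global sections $R\Gamma(X,\Theta_X)$, not merely of the underived $H^0$. This requires replacing $\Theta_X$ by an $\opLie(X)$-resolution with flat (e.g. $K$-flat, acyclic) components so that $R\Gamma(X^{\catS},-)$ of its Chevalley complex is computed by the naive $\Gamma$, then invoking the homotopy-invariance Lemma \ref{lem:Jacobi:hot} to see the answer is independent of the resolution, and finally checking the tensor-functor property of $R\Gamma(X^{\catS},-)$ propagates through the symmetric powers $\Sym^{\ge1}(L[1])$ — here one uses that $X$ is quasi-compact and of finite cohomological dimension so that $R\Gamma$ commutes with the relevant (finite in each filtration degree) direct sums and the Künneth maps for $\boxtimes$ on the powers $X^I$ are isomorphisms. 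Once this comparison is in hand, standard deformation theory of schemes (Schlessinger's criteria, with $H^0(X,\Theta_X)=0$ ensuring the hull is a genuine universal family, and $H^2(X,\Theta_X)$ absorbed into the relations of $R^u$) delivers both assertions; I would cite \cite{R1} for the original argument and supply the algebraic details the Ran-space formalism now makes available.
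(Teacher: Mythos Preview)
Your outline is essentially correct but follows a genuinely different route from the paper. The paper's proof (\S\ref{subsec:hks:prf}) is an explicit Kodaira--Spencer--style construction: it fixes a finite affine cover $\calU$, passes to the resolution $\g_{\calU}=\shQ_{\calU}\otimes\Theta_X$, and for a given flat deformation $\X_n/R_n$ extracts from local trivializations $\psi^n_\alpha$ an explicit Maurer--Cartan element $u_n=\exp(-s)\,d'(\exp(s))\in\g_{\calU}^1\otimes\frkm_n$, packaging it as the class $[v_n]=[(u_n,\tfrac12 u_n^2,\ldots,\tfrac1{n!}u_n^n)]\in\Gamma(X^{\catS},J_n)\otimes\frkm_n$; conversely, from such a class it rebuilds the deformation concretely as $\Spec\bigl(\ker(d_u:\g_{\calU}^0\otimes R_n\to\g_{\calU}^1\otimes R_n)\bigr)$, and then applies this to $\id_{R^u_n}$ to produce $\X^u_n$. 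Your argument instead imports the abstract machinery of deformation theory via dg Lie algebras: identify $R^u_n$ with the $n$-truncated Chevalley cochain algebra of $R\Gamma(X,\Theta_X)$ through the tensor-functor property of $R\Gamma(X^{\catS},-)$, invoke prorepresentability, and obtain $\X^u_n$ by Yoneda. This is conceptually cleaner and makes the role of the Ran-space formalism transparent, but it outsources the actual construction of $\X^u_n$ to general theory, whereas the paper's hands-on approach exhibits the universal deformation as a visible twisted structure sheaf and verifies flatness directly.

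One correction worth making: your claim that $H^0(X,\Theta_X)=0$ means ``there is no gauge ambiguity'' is not literally right. For any concrete model such as $\g_{\calU}$ the degree-$0$ piece $\prod_\alpha\Gamma(U_\alpha,\Theta_X)$ is large and the gauge group acts nontrivially on Maurer--Cartan elements---indeed the paper's check that $[v_n]$ is independent of $\calU$, $\{\psi^n_\alpha\}$ and $C$ is exactly this gauge-invariance verification. What $H^0(X,\Theta_X)=0$ actually buys is that infinitesimal automorphisms of the deformations are trivial, so the gauge-quotient functor is prorepresentable rather than merely admitting a hull; your conclusion stands, but the reasoning should be phrased this way.
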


\begin{rmk*}
The condition $H^0(\Theta_X)=0$ ensures the existence of moduli space 
of deformations of $X$,
and what we need is in fact the latter condition.
Thus one can consider a weaker condition than $H^0(\Theta_X)=0$
but we omit the details.
\end{rmk*}

The proof will be given in \S\ref{subsec:hks:prf}.
We also have the following result on the differential operators.
Let us denote by $\Diff_Y$ 
the sheaf of $\shO_Y$-differential operators for a scheme $Y$.
$\Diff_Y^{\le n}$ denotes the subsheaf of order $\le n$.

\begin{thm}\label{thm:hks:map}
We have a natural morphism
\[
 \kappa^{\le n}: 
 \Diff^{\le n}_{S^u_n} \longto 
 \shO_{S^u_n} \oplus \calH^0(J_n)
\]
of cocommutative dg coalgebras over $S^u_n := \Spec(R^u_n)$.
Therefore, we also have a morphism
\[
 \kappa: 
 \Diff_{S^u} \longto \shO_{S^u} \oplus \calH^0(J).
\]
\end{thm}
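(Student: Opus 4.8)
The statement to prove is Theorem~\ref{thm:hks:map}: the construction of the "higher Kodaira--Spencer" coalgebra morphism
\[
 \kappa^{\le n}: \Diff^{\le n}_{S^u_n} \longto \shO_{S^u_n} \oplus \calH^0(J_n).
\]
The natural strategy is to exploit the universality established in Theorem~\ref{thm:hks}. Write $S^u_n = \Spec(R^u_n)$ and let $\X^u_n \to S^u_n$ be the universal flat deformation of $X$. The point is that $\shO_{S^u_n} \oplus \calH^0(J_n)$ is, by the discussion following Definition~\ref{dfn:Jacobi} and the compatibility of $\Delta$ with the filtration (Remark~\ref{rmk:chv:copro-filtr}), a cocommutative dg coalgebra over $S^u_n$ whose dual is the Artin algebra $R^u_n$ itself; equivalently it is the "distribution coalgebra" of the pointed scheme $(S^u_n, \mathrm{pt})$ where $\mathrm{pt}$ is the closed point. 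On the other hand, $\Diff^{\le n}_{S^u_n}$ is the standard (PBW-filtered) coalgebra of differential operators. So at bottom the statement says: there is a canonical coalgebra map from differential operators on $S^u_n$ to distributions supported at the closed point, respecting the two filtrations, and this is essentially the restriction-to-the-point map dual to the inclusion of the formal neighbourhood.

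First I would make precise the coalgebra structure on the target. Using $\calH^0(J_n) = \bigl(\Gamma(\R(X),J_n)\bigr)$ realized inside $\CMO(X^{\catS})$ and the coproduct $\Delta$ on $J_n$ from \eqref{eq:chv:coprod}, one checks that $\shO_{S^u_n}\oplus\calH^0(J_n)$ is a conilpotent cocommutative coalgebra with a natural increasing (PBW-type) filtration coming from \eqref{eq:Quillen:filtr}, and that its graded dual recovers $R^u_n$ as a filtered algebra (this is exactly how $R^u_n$ was defined). Dually, $\Diff^{\le n}_{S^u_n}$ carries its order filtration, is cocommutative under the standard coproduct $\delta(D) = \sum D_{(1)}\otimes D_{(2)}$ characterized on $\Diff^{\le 1}$ by $\delta(\partial) = \partial\otimes 1 + 1\otimes\partial + \text{(function part)}$, and the associated graded is $\Sym_{\shO}(\Theta_{S^u_n})$. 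The map $\kappa^{\le n}$ is then constructed by dualizing: a differential operator of order $\le n$ on $S^u_n$ acts on $\shO_{S^u_n}$, hence on the Artin quotient $R^u_n$; composing with the $k$-linear "evaluation at the point" gives an element of $(R^u_n)^*$, i.e.\ of $k\oplus\Gamma(X^{\catS},J_n)$, and one verifies this assignment is $\shO_{S^u_n}$-linear in the appropriate sense and intertwines $\delta$ with $\Delta$. Functoriality in $n$ (giving the colimit morphism $\kappa:\Diff_{S^u}\to\shO_{S^u}\oplus\calH^0(J)$) then follows because both sides are built as colimits over the direct system $\{(\X^u_n,R^u_n)\}$ from Theorem~\ref{thm:hks}(1) and the construction is manifestly compatible with the transition maps $R^u_{n+1}\surj R^u_n$.

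The key step where real content enters is identifying the action of $\Diff^{\le n}_{S^u_n}$ on $R^u_n$ with the coalgebra $\shO_{S^u_n}\oplus\calH^0(J_n)$ \emph{through the universal deformation}: one must use that $\X^u_n$ is the pullback of the universal family and that the higher Kodaira--Spencer data — the derivatives of the classifying map — are precisely encoded by $\calH^0(J_n)$ via Theorem~\ref{thm:hks}(2). Concretely, given a first-order operator (a vector field on $S^u_n$) the classical Kodaira--Spencer map sends it to a class in $H^1(X,\Theta_X)=\calH^0(J_1)$; the assertion is that iterating/symmetrizing produces the full coalgebra morphism, and the obstruction to this being well-defined and filtered is exactly the compatibility of $\Delta$ with the PBW filtration, which is Remark~\ref{rmk:chv:copro-filtr}. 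I expect the main obstacle to be checking that $\kappa^{\le n}$ is a morphism of \emph{coalgebras} (not merely of filtered modules): this amounts to showing the coproduct on differential operators, pushed through the universal family, matches the coproduct $\Delta$ on the Jacobi complex. This is where one genuinely needs that $R^u_n$ was defined as the dual of $\bigl(\Gamma(\R(X),J_n),\Delta\bigr)$ with its Hopf-algebra-compatible filtration, together with the functoriality $C\circ\Delta^{(\catS)}_* = \Delta^{(\catS)}_*\circ C$ noted after Definition~\ref{dfn:Jacobi}; everything else is the routine bookkeeping of dualizing a filtered coalgebra and passing to the limit.
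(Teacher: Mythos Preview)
Your proposal misses the mechanism the paper actually uses and, as written, does not produce a well-defined map into the stated target.

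The paper's proof runs through the connecting morphism of \S\ref{subsec:connect} (Fact~\ref{fct:HS:conn}), which was set up precisely for this theorem. One takes the universal family $\pi:\X^u_n\to S^u_n$ and the dg Lie ideal $\pi_*\Theta_{\X^u_n/S^u_n}\subset\pi_*\Theta_{\X^u_n}$; Fact~\ref{fct:HS:conn} then manufactures, via the Maurer--Cartan element $\wt{c}$ on the cone, a morphism of unital cocommutative dg coalgebras
\[
 c^{\le n}:\ U(\Theta_{S^u_n})\longto C(\pi_*\Theta_{\X^u_n/S^u_n}),
\]
and one finishes by identifying $U(\Theta_{S^u_n})\simeq\Diff^{\le n}_{S^u_n}$ and $C(\pi_*\Theta_{\X^u_n/S^u_n})\simeq\shO_{S^u_n}\oplus\calH^0(J_n)$. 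The coalgebra compatibility is not something to be checked after the fact: it is built into Fact~\ref{fct:HS:conn}, whose first-order part is exactly the classical Kodaira--Spencer coboundary.

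Your ``evaluate at the closed point'' construction does not do this job. First, the target $\shO_{S^u_n}\oplus\calH^0(J_n)$ is, in the paper's identification, the Chevalley complex of $\pi_*\Theta_{\X^u_n/S^u_n}$ as an $\shO_{S^u_n}$-coalgebra; it is \emph{not} the $k$-linear dual $(R^u_n)^*$ that your evaluation map lands in. The two carry different $R^u_n$-module structures, and your claim that the target ``is the distribution coalgebra of $(S^u_n,\mathrm{pt})$'' is exactly the point that would need proof. Second, the map $D\mapsto(f\mapsto(Df)(\mathrm{pt}))$ is not left-$\shO_{S^u_n}$-linear (for $g\in R^u_n$ one has $(gD)(f)(\mathrm{pt})=g(\mathrm{pt})\cdot(Df)(\mathrm{pt})$, not $(D(gf))(\mathrm{pt})$), so it is not a morphism of coalgebras over $S^u_n$ in the required sense. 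Your later remark that one should ``iterate/symmetrize'' the classical Kodaira--Spencer map is morally the right idea, but making that precise \emph{is} the content of the connecting morphism in \S\ref{subsec:connect}; you need to invoke that machinery rather than the duality shortcut.
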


By universality we have

\begin{cor*}[{\cite{HS}}]
For any flat deformation $\X_n$ of $X$ 
over an Artin local $k$-algebra $R_n$ of exponent $n$,
we have a natural morphism
\[
 \Diff^{\le n}_{S_n} \longto 
 \shO_{S_n} \oplus \calH^0(\alpha_n^* J_n)
\]
of commutative dg coalgebras over $S_n := \Spec(R_n)$,
where $\alpha_n: S_n \to \Spec(R^u_n)$ is the morphism in 
Theorem \ref{thm:hks} (2). 
\end{cor*}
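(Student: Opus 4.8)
The plan is to obtain the asserted morphism by base change of $\kappa^{\le n}$ from $S^u_n$ to $S_n$ along $\alpha_n$. By Theorem~\ref{thm:hks}(2) the deformation $\X_n$ is the pull-back $\alpha_n^*\X^u_n$, so the construction of $\kappa^{\le n}$ is compatible with base change along $\alpha_n$; in particular the Jacobi complex $J_n$ (with the $R^u_n$-structure coming from $\X^u_n$) pulls back, and since $X$ is smooth (so $\Theta_X$ is locally free) and $\alpha_n^* = R_n\otimes_{R^u_n}(-)$ is right exact, there is a canonical identification $\alpha_n^*\calH^0(J_n)\simeq\calH^0(\alpha_n^*J_n)$. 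Applying $\alpha_n^*$ to $\kappa^{\le n}$ then produces
\[
 \alpha_n^*\Diff^{\le n}_{S^u_n}\xrightarrow{\ \alpha_n^*\kappa^{\le n}\ }
 \alpha_n^*\bigl(\shO_{S^u_n}\oplus\calH^0(J_n)\bigr)
 = \shO_{S_n}\oplus\calH^0(\alpha_n^*J_n),
\]
a morphism compatible with the coalgebra structures of Theorem~\ref{thm:hks:map}, because $\alpha_n^*$ is symmetric monoidal and $\kappa^{\le n}$ is a coalgebra morphism.

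It then remains to produce a canonical morphism $\Diff^{\le n}_{S_n}\to\alpha_n^*\Diff^{\le n}_{S^u_n}$ of dg coalgebras and precompose with $\alpha_n^*\kappa^{\le n}$. First I would invoke the functoriality of the sheaves of principal parts: since $\alpha_n\times\alpha_n$ carries the diagonal of $S_n$ into that of $S^u_n$, one gets a canonical surjection $\alpha_n^*\shP^n_{S^u_n/k}\twoheadrightarrow\shP^n_{S_n/k}$ of $\shO_{S_n}$-algebras, compatible with the order filtrations. Dualizing over the structure sheaf — legitimate since $R^u_n$ is a finite-dimensional $k$-algebra, so $\shP^n_{S^u_n/k}$ is finitely presented and $\Hom_{\shO_{S_n}}(\alpha_n^*\shP^n_{S^u_n/k},\shO_{S_n})\simeq\alpha_n^*\Diff^{\le n}_{S^u_n}$ — yields an injection $\Diff^{\le n}_{S_n}\hookrightarrow\alpha_n^*\Diff^{\le n}_{S^u_n}$ respecting the order filtration. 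The composite
\[
 \Diff^{\le n}_{S_n}\longrightarrow\alpha_n^*\Diff^{\le n}_{S^u_n}
 \xrightarrow{\ \alpha_n^*\kappa^{\le n}\ }\shO_{S_n}\oplus\calH^0(\alpha_n^*J_n)
\]
is the morphism claimed, and it respects the coalgebra structures considered in Theorem~\ref{thm:hks:map}. Universality enters only through the identity $\X_n=\alpha_n^*\X^u_n$, which is what pins down the target as $\shO_{S_n}\oplus\calH^0(\alpha_n^*J_n)$.

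The step I expect to be the main obstacle is verifying that $\Diff^{\le n}_{S_n}\to\alpha_n^*\Diff^{\le n}_{S^u_n}$ is genuinely a morphism of dg coalgebras. Over the non-smooth Artin bases $S_n$, $S^u_n$ the principal parts are not locally free, so the comultiplication on $\Diff^{\le n}$ is not simply the dual of the multiplication on $\shP^n$; one must argue at the level of the pro-object $\shP^{\bullet}$ and then control the passage to order $\le n$ and the interaction with $\alpha_n^*$. By contrast, the facts about $\alpha_n^*$ used above — commuting with $\Hom$ against finitely presented modules, and commuting with $\calH^0$ of the bounded, degreewise $\shO_X$-flat complex $J_n$ via right exactness — are routine.
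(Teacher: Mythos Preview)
Your approach is sound and is one legitimate reading of the paper's terse ``By universality''. However, the paper's intended route is almost certainly more direct and avoids the obstacle you flag. The construction in \S\ref{subsec:hks:map} is not special to the universal family: Fact~\ref{fct:HS:conn} applies to the ideal $\pi_*\Theta_{\X_n/S_n}\hookrightarrow\pi_*\Theta_{\X_n}$ for \emph{any} flat deformation $\pi:\X_n\to S_n$, and already outputs a morphism
\[
 U(\Theta_{S_n})\longrightarrow C(\pi_*\Theta_{\X_n/S_n})
\]
of unital cocommutative dg coalgebras in $\MO(S_n)$. Universality enters only afterwards, to identify the right-hand side: since $\X_n=\alpha_n^*\X^u_n$ one has $\Theta_{\X_n/S_n}\simeq\alpha_n^*\Theta_{\X^u_n/S^u_n}$, whence $C(\pi_*\Theta_{\X_n/S_n})\simeq\shO_{S_n}\oplus\calH^0(\alpha_n^*J_n)$ exactly as in the universal case. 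The source is identified with $\Diff^{\le n}_{S_n}$ by the same argument the paper uses over $S^u_n$.

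The gain of this direct route over yours is that it never asks for a coalgebra comparison $\Diff^{\le n}_{S_n}\to\alpha_n^*\Diff^{\le n}_{S^u_n}$: the coalgebra structure on the source comes for free from the enveloping algebra, and Fact~\ref{fct:HS:conn} guarantees compatibility. Your detour through principal parts and dualization is correct in principle, but the issue you yourself isolate---controlling the comultiplication on $\Diff^{\le n}$ over non-smooth Artin bases under $\alpha_n^*$---is genuinely delicate and entirely unnecessary here. Functoriality of the connecting morphism does the work cleanly.
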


The case $n=1$ coincides with the classical Kodaira-Spencer map
\[
 \Diff^{\le 1}_{S_1}/\shO_{S_1} = \Theta_{S_1} 
 \longto \calH^0(\alpha_1^* J_1) \simeq R \pi^1_* \Theta_{\X_1/S_1},
\]
where $\pi:\X_1 \to S_1$ is the canonical projection.
Hence we call $\kappa^{\le n}$ the \emph{higher Kodaira-Spencer map}.

\begin{rmk*}
A similar result as Theorem \ref{thm:hks:map} is shown in \cite[\S3]{R2}.
%We will give an explanation of its  relation to the above theorem 
%in the next \S\ref{sect:G-bundle}..
The construction of higher Kodaira-Spencer maps using Ran space was 
originally announced in \cite{R0}. 
A similar construction without Ran space was given by \cite{EV}.
\end{rmk*}

Before starting the proof of Theorem \ref{thm:hks},
we give two preparations in \S\ref{subsec:hks:MC} and \S\ref{subsec:resol}.

%%%%%%%%%%%%%%%%%%%%%%%%%%%%%%%%%%%%%%%%%%%%%%%%%%%%%%%%%%%%%%%%%%%%%%
\subsection{Maurer-Cartan equation}
\label{subsec:hks:MC}

Let us briefly recall the  Maurer-Cartan equation in dg Lie algebras.
For a dg $k$-Lie algebra $\g=(\g^{\bullet},d,[,])$,
consider a solution $\alpha \in \g^1$ of the Maurer-Cartan equation
\[
 d \alpha + \tfrac{1}{2}[\alpha,\alpha]=0.
\]
We have

\begin{fct}\label{fct:MC}
The twisted differential
\[
 d_\alpha := d + [\alpha,-]
\]
gives a new dg $k$-Lie algebra 
\[
 \g_\alpha :=(\g^{\bullet},d_\alpha,[,]).
\]
\end{fct}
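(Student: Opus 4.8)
The plan is to verify the three structural axioms for a dg Lie algebra listed in \S\ref{subsec:jac:chv} for the datum $\g_\alpha=(\g^\bullet,d_\alpha,[,])$, the bracket being unchanged. Since the graded skew-symmetry and the graded Jacobi identity involve only $[,]$, they are inherited verbatim from $\g$; so the whole content is (a) that $d_\alpha$ is a differential of degree $+1$, and (b) that $d_\alpha$ satisfies the graded Leibniz rule with respect to $[,]$. That $d_\alpha$ raises degree by one is immediate: $\alpha\in\g^1$, so $[\alpha,x]$ has degree $|x|+1$, as does $dx$.

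For the square, I would compute, for $x\in\g$,
\[
 d_\alpha^2(x)=d(dx+[\alpha,x])+[\alpha,dx+[\alpha,x]]
 =d^2x+d[\alpha,x]+[\alpha,dx]+[\alpha,[\alpha,x]].
\]
Here $d^2x=0$, and the graded Leibniz rule for $d$ (with $|\alpha|=1$) gives $d[\alpha,x]=[d\alpha,x]-[\alpha,dx]$, so the middle two terms collapse to $[d\alpha,x]$. For the last term I would invoke the graded Jacobi identity applied to the odd element $\alpha$, which yields $[\alpha,[\alpha,x]]=\tfrac12[[\alpha,\alpha],x]$. Hence $d_\alpha^2(x)=[\,d\alpha+\tfrac12[\alpha,\alpha]\,,x]$, which vanishes by the Maurer--Cartan equation.

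For the Leibniz rule, writing the graded Jacobi identity in the derivation form $[\alpha,[x,y]]=[[\alpha,x],y]+(-1)^{|x|}[x,[\alpha,y]]$ (valid since $|\alpha|=1$) and combining it with $d[x,y]=[dx,y]+(-1)^{|x|}[x,dy]$, I would get
\[
 d_\alpha[x,y]=[dx+[\alpha,x],y]+(-1)^{|x|}[x,dy+[\alpha,y]]
 =[d_\alpha x,y]+(-1)^{|x|}[x,d_\alpha y],
\]
which is exactly the graded Leibniz rule for $d_\alpha$. This completes the verification that $\g_\alpha$ is a dg Lie algebra.

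The only real subtlety is the sign bookkeeping, concentrated in the identity $[\alpha,[\alpha,x]]=\tfrac12[[\alpha,\alpha],x]$ for an odd element and in the passage from the cyclic form of the graded Jacobi identity to its derivation form; once these are in hand the argument is a short formal manipulation. Everything is entirely internal to $\g$, so no use of the ambient category of sheaves on $X^{\catS}$ is needed here.
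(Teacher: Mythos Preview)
Your proof is correct; the sign bookkeeping in the two Jacobi computations is right, and the three axioms are verified cleanly. Note, however, that the paper itself gives no proof of this statement: it is recorded as a \texttt{Fact} in \S\ref{subsec:hks:MC} and treated as well known, so there is nothing in the paper to compare your argument against.
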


%%%%%%%%%%%%%%%%%%%%%%%%%%%%%%%%%%%%%%%%%%%%%%%%%%%%%%%%%%%%%%%%%%%%%%
\subsection{Resolution}
\label{subsec:resol}

In \cite[\S4]{R1} the universal flat deformation of a complex manifold $X$ 
is constructed over the algebra $R^u=R^u(\Theta_X)$.
The construction is done in terms of the Jacobi complex $J(\shQ(\Theta_X))$ 
with $\shQ(\Theta_X)$ the \v{C}ech or Dolbeault resolution of $\Theta_X$.
%In our setting they correspond to the Dolbeault-style algebras
%$\shQ_{\calU}$ and $\shQ_{\partial}$ in Example \ref{eg:Dolbeault}.

Here we spell out a construction of resolution
via the Thom-Sullivan complex formalism
following \cite[\S4.1.3, Proof of Lemma]{BD} and \cite[\S5.2]{HS}.

For a finite set $I$, denote by $\bbA_I$ the subscheme of 
the affine space $\bbA^I$ defined by the equation $\sum t_i = 1$.
Thus 
\[
 \bbA_I = \Spec(R_I), \quad R_I := k[t_i \mid i \in I]/(\tsum t_i - 1).
\]
Denote by $\Omega_I$ the algebraic de Rham algebra of $R_I$ over $k$.
\[
 \Omega_I := \Gamma(\bbA_I, \Omega_{\bbA_I/k})
 \simeq R_I[ d t_i \mid i \in I  ]/(\tsum d t_i).
\]
It is a commutative dg $k$-algebra with $\deg(dt_i)=1$ and 
$d(t_i)=d t_i$, $d(d t_i) = 0$.
%Denote by $\Omega_I^\bullet$ the dg grading on this algebra.
%
For each $K \subset I$ we have a natural projection 
$\psi: \Omega_I \surj \Omega_K$.
%Also $\Omega=\{\Omega_{[n]}\}_{n\ge0}$ with 
%$[n]:=\{0,1,\ldots,n\}$ is naturally a simplicial commutative dg algebra.

Let $X$ be a separated  quasi-compact $k$-scheme.
Take a finite affine covering $\calU=\{U_s\}_{s \in S}$ of $X$.
For $I \subset S$ denote by 
\[
 j_I: U_I := \cap_{i \in I} U_i \longinj X
\]
the corresponding embedding.
For each $K \subset I$ we have a diagram
\[
 j_{K *} \shO_{U_K} \otimes \Omega_K \xrightarrow{\ \varphi \otimes \id \ } 
 j_{I *} \shO_{U_I} \otimes \Omega_K \xleftarrow{\ \id \otimes \psi    \ }
 j_{I *} \shO_{U_I} \otimes \Omega_I.
\]

Now let us introduce

\begin{dfn*}
Define the subalgebra $\shQ_{\calU}$ of 
the commutative dg $\shO_X$-algebra 
$\prod_{I} j_{I *} \shO_{U_I} \otimes \Omega_I$ to be 
\[
 \shQ_{\calU} := 
 \{ (f_I) \mid (\varphi\otimes \id)(f_K)
    =(\id \otimes \psi)(f_I) \text{ for any } K \subset I\}
 \subset \tprd_{I} j_{I *} \shO_{U_I} \otimes \Omega_I.
\]
\end{dfn*}

$\shQ_{\calU}$ is a \emph{Dolbeault $\shO_X$-algebra} 
in the sense of \cite[\S4.1.3]{BD}.
Namely, it is a commutative unital dg $\shO_X$-algebra, 
quasi-coherent as an $\shO_X$-module, 
satisfying
\begin{enumerate}
\item 
the structure map $\shO_X \to \shQ_{\calU}$ as an $\shO_X$-algebra 
is a quasi-isomorphism,
\item
$\shQ_{\calU}$ is homotopically $\shO_X$-flat, namely,
for every acyclic complex $F$ of $\shO_X$-modules 
the complex $\shQ_{\calU} \otimes_{\shO_X} F$ is acyclic,
\item
$\Spec(\shQ_{\calU}^0)$ is an affine scheme.
\end{enumerate}
By this  remark we have

\begin{lem*}
For a quasi-coherent  $\shO_X$-module $M$,
the canonical map $M \to M \otimes \shQ_{\calU}$ is a quasi-isomorphism.
In particular, the class of $M \otimes \shQ_{\calU}$ 
in the derived category is independent of the choice of $\calU$.
\end{lem*}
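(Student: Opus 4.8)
The plan is to show that the canonical map $M \to M \otimes_{\shO_X} \shQ_{\calU}$ is a quasi-isomorphism by combining the three properties listed for a Dolbeault $\shO_X$-algebra. First I would reduce to the case $M = \shO_X$ by observing that for a general quasi-coherent $M$ one cannot simply tensor the quasi-isomorphism $\shO_X \to \shQ_{\calU}$ with $M$ (tensoring need not preserve quasi-isomorphisms), so one must instead use property (2): take a flat (or locally free) resolution $P^\bullet \to M$ of $M$ by quasi-coherent $\shO_X$-modules. Then $P^\bullet \otimes_{\shO_X} \shQ_{\calU}$ computes $M \otimes^{L}_{\shO_X} \shQ_{\calU}$, and since the cone of $\shO_X \to \shQ_{\calU}$ is acyclic, property (2) (homotopical $\shO_X$-flatness) gives that $P^\bullet \otimes_{\shO_X}(\text{cone})$ is acyclic, hence $P^\bullet \to P^\bullet \otimes_{\shO_X}\shQ_{\calU}$ is a quasi-isomorphism.

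Next I would compare $P^\bullet \otimes_{\shO_X}\shQ_{\calU}$ with $M \otimes_{\shO_X}\shQ_{\calU}$. Here again homotopical flatness of $\shQ_{\calU}$ is the key point: the cone of $P^\bullet \to M$ is acyclic, so tensoring with $\shQ_{\calU}$ keeps it acyclic, giving $P^\bullet \otimes_{\shO_X}\shQ_{\calU} \simeq M \otimes_{\shO_X}\shQ_{\calU}$. Chaining the two quasi-isomorphisms $M \simeq P^\bullet \simeq P^\bullet \otimes_{\shO_X}\shQ_{\calU} \simeq M \otimes_{\shO_X}\shQ_{\calU}$, together with property (1) which supplies the map $\shO_X \to \shQ_{\calU}$ inducing the canonical map $M \to M\otimes_{\shO_X}\shQ_{\calU}$ after tensoring, yields that this canonical map is a quasi-isomorphism. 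One should check that all the identifications are compatible with the canonical maps, which is a routine diagram chase.

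For the final sentence — independence of the class of $M \otimes_{\shO_X}\shQ_{\calU}$ in $\DMO(X)$ from the choice of $\calU$ — I would argue that for any two finite affine coverings $\calU$, $\calU'$ there is a common refinement $\calU''$ (union of the two coverings), with restriction maps $\shQ_{\calU} \to \shQ_{\calU''} \leftarrow \shQ_{\calU'}$ of Dolbeault $\shO_X$-algebras; tensoring with $M$ and using the first part twice (each of $M \otimes_{\shO_X}\shQ_{\calU}$, $M \otimes_{\shO_X}\shQ_{\calU'}$, $M \otimes_{\shO_X}\shQ_{\calU''}$ is quasi-isomorphic to $M$ compatibly with the canonical maps) shows all three represent the same object of $\DMO(X)$. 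Alternatively one can invoke the general fact from \cite[\S4.1.3]{BD} that the derived tensor product with any Dolbeault $\shO_X$-algebra computes the same derived object.

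The main obstacle is the first reduction: one must resist the temptation to tensor the quasi-isomorphism $\shO_X \to \shQ_{\calU}$ directly with $M$, and instead route the argument through a flat resolution of $M$, using homotopical $\shO_X$-flatness of $\shQ_{\calU}$ in an essential way to control the tensor products. Once that structure is in place the remaining steps are formal.
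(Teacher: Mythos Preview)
The paper does not give a proof here: it simply writes ``By this remark we have'' the lemma, taking it as immediate from properties (1) and (2) of a Dolbeault $\shO_X$-algebra. Your argument is a correct elaboration of that claim, and your treatment of independence of $\calU$ via a common refinement is also fine.

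One minor correction: in your first step you justify acyclicity of $P^\bullet \otimes_{\shO_X} \Cone(\shO_X \to \shQ_{\calU})$ by invoking property (2), but property (2) concerns tensoring an acyclic complex \emph{with $\shQ_{\calU}$}, not with this cone. The correct reason that step works is the termwise $\shO_X$-flatness of $P^\bullet$ (each $P^i \otimes_{\shO_X}(-)$ is exact, hence preserves acyclicity of the cone). Property (2) is genuinely used only in your second step, where you tensor the acyclic complex $\Cone(P^\bullet \to M)$ with $\shQ_{\calU}$. Alternatively, one can bypass the resolution of $M$ entirely: property (2) says precisely that $\shQ_{\calU}$ is K-flat, so $M \otimes_{\shO_X} \shQ_{\calU}$ already represents $M \otimes^L_{\shO_X} \shQ_{\calU}$, which by property (1) is isomorphic in $\DMO(X)$ to $M \otimes^L_{\shO_X} \shO_X = M$. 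This is presumably the one-line argument the paper has in mind.
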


\begin{rmk*}
\begin{enumerate}
\item
Let us mention that in \cite[\S5.2]{HS} the same construction is given 
in terms of the injective limit in the category of the diagrams. 
In \cite{HS} $\shQ_{\calU}$ is used for the construction of 
higher Kodaira-Spencer maps.

\item
If  $X$ is smooth and proper over $k=\bbC$,
then the classical Dolbeault algebra $\shQ_{\oln{\partial}}$,
namely the $\oln{\partial}$-resolution of holomorphic functions
\[
 \shQ_{\oln{\partial}} := 
 (\Omega^{0,0}_{X} \xrightarrow{\ \oln{\partial} \ }
 \Omega^{0,1}_{X} \xrightarrow{\ \oln{\partial} \ } \cdots)
\]
is clearly a Dolbeault $\shO_X$-algebra except the quasi-coherent property.
In \cite[\S4.1.4]{BD} such an object is called a Dolbeault-style algebra.
In \cite{R1} $\shQ_{\oln{\partial}}$ 
is used in the construction 
of the universal deformation.

%A \emph{Dolbeault $\shD_X$-algebra} is a dg $\shD_X$-algebra 
%which is a Dolbeault $\shO_X$-algebra.
%It is also a quasi-coherent $\shO_X$-module,
%and indeed is a Dolbeault algebra in the sense of \cite[\S4.1.3]{BD}.
%Namely, for a scheme $X$ over a field $k$,
%a \emph{Dolbeault-style $\shO_X$-algebra} is 
%it is a commutative unital dg $\shO_S$-algebra %$\shQ$
%such that for every quasi-coherent $\shO_S$-module $M$ 
%the natural morphisms 
%\[
% \Gamma(S, M \otimes \shQ) \longto R\Gamma(S, M \otimes \shQ), \quad
% R\Gamma(S, M) \longto R\Gamma(S, M \otimes \shQ)
%\]
%are quasi-isomorphisms.
\end{enumerate}
\end{rmk*}

%%%%%%%%%%%%%%%%%%%%%%%%%%%%%%%%%%%%%%%%%%%%%%%%%%%%%%%%%%%%%%%%%%%%%%
\subsection{The construction of universal family}
\label{subsec:hks:prf}

Let us start the proof of Theorem \ref{thm:hks}.
following \cite{R1}.
Actually  the strategy is based on 
the classical Kodaira-Spencer theory \cite{KS}.

By the assumption on $X$ we can take a finite affine covering 
$\calU=\{U_\alpha\}_{\alpha \in A}$ of $X$.
Set
\begin{equation}\label{eq:gU}
 \g_{\calU} := \shQ_{\calU} \otimes \Theta_X
 = (\g_{\calU}^\bullet,d_{\g},[,]_{\g}),
\end{equation}
where $\shQ_{\calU}$ is given in \S\ref{subsec:resol}.
$\g_{\calU}$ is nothing but the \v{C}ech resolution of $\Theta_X$.
It is a dg Lie algebra satisfying the conditions 
in Definition \ref{dfn:jac:Lie(X)}
By the discussion at Lemma \ref{lem:Jacobi:hot}, 
we have the Jacobi complex $J_n(\g_{\calU})$ and 
\[
 J_n(\g_{\calU}) \longsimto J_n.
\]

Assume that we are given a flat deformation $\X_n$ of $X$ over 
an Artin $k$-algebra $R_n$ of exponent $n$.
Denote  the maximal ideal of $R_n$ by $\frkm_n$.
We have $\frkm_n^{n+1}=0$.

Denoting $\shO := \shO_{X}$ and $\shO_n := \shO_{\X_n}$, 
we have a set $\{\psi^n_{\alpha}\}_{\alpha \in A}$ 
of isomorphisms of algebras
\[
 \psi^n_{\alpha}: \shO_n(U^n_\alpha) \longsimto \shO(U_\alpha) \otimes R_n.
\]
Here $U^n_\alpha$ is the open subset of $\X_n$ corresponding to $U_\alpha$.
Then we can find 
$s_\alpha \in \g_{\calU}^0(U_\alpha) \otimes \frkm_n$ such that 
\[
 \exp(s_\alpha) = \psi^n_\alpha \circ C,
\]
where $C:X \times_{\Spec(k)}\Spec(R_n) \to \X_n$
is a global trivialization
and $\exp(s_\alpha)=\sum_{i=0}^n s_\alpha^i/i!$ is the formal exponential.
Then on $U_{\alpha,\beta} := U_{\alpha} \cap U_{\beta}$ the cocycle
\[
 D^n_{\alpha,\beta} := \psi^n_{\alpha} (\psi^n_{\beta})^{-1}
 \in \Aut_{R_n}\left(\shO(U_{\alpha,\beta}) \otimes R_n\right)
\]
 is expressed by
\[
 D^n_{\alpha,\beta} = \exp(s_\alpha)\exp(-s_\beta).
\]
Below we denote by 
$s := (s_\alpha) \in \g_{\calU}^0 \otimes \frkm_n = \Theta_X \otimes \frkm_n$.

Recall \eqref{eq:Quillen:d} that 
the differential $d$ of the complex $J_n(\g_{\calU})$ 
is given by $d=d'+d''$,
where $d'$ comes from the differential $d_{\g}$ of the \v{C}ech resolution,
and $d''$ comes from the Lie bracket of $\Theta_X$.
Define
\[
 u_n :=  \exp(-s) d'(\exp(s)) = -d'(\exp(-s)) \exp(s).
\]
%Since $C$ is globally defined it commutes with $d'$ and 
%$u_{n,\alpha} = (\psi^n_\alpha)^{-1}d'\exp(\psi^n_{\alpha})$.
The cocycle condition 
$D_{\alpha \beta}^n D_{\beta,\gamma}^n = D_{\alpha \gamma}^n$
yields the Maurer-Cartan equation
\[
 d'(u_n) + \tfrac{1}{2} [u_n,u_n]_{\g} = 0.
\]

Now set 
\[
 v_n := \bigl(u_n,\tfrac{1}{2}(u_n)^2,\ldots,\tfrac{1}{n!}(u_n)^n\bigr)
 \in J_n(\g_{\calU}) \otimes \frkm_n.
\]
One can easily check that the cohomology class
\[
 [v_n] \in 
 \Gamma(X^{\catS},J_n(\g_{\calU})) \otimes \frkm_n 
 \simeq  \Gamma(X^{\catS},J_n) \otimes \frkm_n
\]
depends only on the fixed deformation $\X_n$,
namely is independent of $\calU$, $\{\psi^n_{\alpha}\}$ and $C$.
Thus we have constructed a correspondence
\[
 \X_n/\Spec(R_n) \longmapsto 
 [v_n] \in \Gamma(X^{\catS},J_n) \otimes \frkm_n.
\]

Conversely, starting from a cohomology class 
$[v] \in \Gamma(X^{\catS},J_n) \otimes \frkm_n$,
we can construct a flat deformation as follows.
Take a representative $v_n$ of $[v_n]$ 
and define $u_n \in J^1_n$ to be the degree $1$ of $v_n$.
It satisfies the Maurer-Cartan equation.
Then by Fact \ref{fct:MC} we can modify 
the dg Lie algebra \eqref{eq:gU} to 
\[
 \g'_{\calU} = (\g_{\calU}^\bullet, d_u, [,]_{\g}),\quad
 d_u := d_{\g} + [u,-].
\]
Then
\[
 \shO_n := 
 \ker(d_u:\g_{\calU}^0 \otimes R_n \longto \g_{\calU}^1 \otimes R_n)
\]
is a sheaf of $R_n$-algebras.
It is flat over $R_n$ by the same reason as \cite[Lemma 4.1]{R1}.
Thus we have a flat deformation
\[
 \X_n := \Spec(\shO_n).
\]

The universal family $X^u_n$ is obtained by applying the discussion to
$\id: R^u_n \simto R^u_n$.
The construction respects the filtration 
$J_\bullet$ of $J$, so we have the limit universal family $X^u$
over $\varprojlim \Spec R^u_n = \Spf R^u$.

\begin{rmk*}
In the argument of \cite{R1} an emphasis is put on the OS structure,
which consists of the data on a coalgebra $C$ equivalent to the filtration 
by maximal ideals on the dual Artin algebra $C^*$.
In our formulation, this structure is already built 
in the tensor structure $\CMO(X^{\frkS})^*$ 
where our coalgebra $J_n$  lives.
See also Remark \ref{rmk:conv}.
\end{rmk*}

Before turning to the proof of Theorem \ref{thm:hks:map},
we give a  preparation in the next \S\ref{subsec:connect}.

%%%%%%%%%%%%%%%%%%%%%%%%%%%%%%%%%%%%%%%%%%%%%%%%%%%%%%%%%%%%%%%%%%%%%%
\subsection{Connecting morphism}
\label{subsec:connect}

Let us give a detailed explanation of \emph{unital} coalgebra.
An element $u$ of a $k$-coalgebra $C=(C,\Delta,\ve)$  
is called \emph{group-like} if
$d u = 0$, $\Delta(u)=u \otimes u$ and $\ve(u)=1 \in k$.
A group-like element $u$ defines a splitting 
$C=k u \oplus C^+$ with $C^+ :=\ker(\ve)$.
Denote by $\pi_u:C \surj C^+$ the projection.
Such $u$ also defines a filtration $F^u_\bullet C$
by $F^u_{n-1} C := \ker(C \to C^{\otimes n} \to (C^+)^{\otimes n})$, 
where the first map is the $n$-th composition of $\Delta$,
and the second one is $\pi_u^{\otimes  n}$.
$u$ is called a \emph{unit}
if the filtration $F^u_{\bullet}C$ is exhaustive.
A unital coalgebra is a pair $(C,1_C)$ of coalgebra and its unit.
Similarly one can define a unital coalgebra in any 
unital abelian $k$-linear tensor category $\M$.

In particular, taking $\M=\CMO(X)^*$ with $X$ a $k$-scheme,
we denote by $\Cocom_u(X)$
the category of unital cocommutative dg $\shO_X$-coalgebras.
Let $\g$ be a dg Lie $\shO_X$-algebra and 
$C(\g)$ be its Chevalley complex.
Since the coproduct on $C(\g)$ is cocommutative
and $1 \in C(\g)^0 = \shO_X$ is a unit,
we have $C(\g) \in \Cocom_u(X)$.
For a morphism $f:C \to C(\g)$ of dg $\shO_X$-coalgebras,
we denote by $f_i := p_i\circ f$ 
the composition with the projection 
$p_i:C(\g) \to \Sym^i(\g[1])$.

\begin{fct}[{\cite[Appendix B, 5.3]{Q}}]\label{fct:Q:MC}
We have a bijection  
\[
 \Hom_{\Cocom_u(X)}(C,C(\g)) \longsimto \MC(C,\g), \quad 
 f \longmapsto f_1,
\]
where the target is the space of solutions of Maurer-Cartan equation.
\[ 
 \MC(C,\g) := 
  \left\{f_1 \in \Hom_{\CMO(X)}(C,\g[1]) \mid 
       d f_1 + \tfrac{1}{2}[f_1,f_1]=0, \ f_1(1_C)=0\right\}.
\]
\end{fct}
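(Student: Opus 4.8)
The plan is to reduce the statement to the standard cofree-coalgebra argument of Quillen, working internally in the tensor category $\CMO(X)^*$. Since $C(\g) = \Sym(\g[1])$ with its shuffle coproduct is, as a cocommutative coalgebra, cofree on the complex $\g[1]$ in the appropriate pro-nilpotent sense, a morphism of cocommutative dg coalgebras $f\colon C \to C(\g)$ is determined by its ``corepresenting'' component $f_1 = p_1 \circ f\colon C \to \Sym^1(\g[1]) = \g[1]$, and conversely every such $f_1$ extends uniquely to a coalgebra map. Thus the first step is to make precise the reconstruction formula: given $f_1$, set $f_n := (f_1^{\otimes n}) \circ \Delta^{(n)}$ composed with the projection to $\Sym^n(\g[1])$ (using the $n$-th iterate $\Delta^{(n)}\colon C \to C^{\otimes n}$), and then $f := \sum_n \tfrac{1}{n!} f_n$; the unitality hypothesis on $C$ (the filtration $F^u_\bullet C$ is exhaustive, in the sense of Remark \ref{rmk:chv:copro-filtr} and \S\ref{subsec:connect}) guarantees that this sum is locally finite and lands in $C(\g)$, and that $1_C \mapsto 1$. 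One checks that $f$ so defined is a coalgebra morphism precisely because $\Delta$ on $C(\g)$ is the shuffle coproduct, compatibility with the counit being automatic from $f_1(1_C)=0$.

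The second step is to match the differentials. Writing the differential on $C(\g)$ as $d = d' + d''$ with $d'$ induced by $d_\g$ and $d''$ by the bracket as in \eqref{eq:Quillen:d}, the condition that $f$ commutes with differentials, projected onto $\Sym^1(\g[1]) = \g[1]$, reads exactly $d_{\g[1]} f_1 = f_1 d_C + \tfrac12[f_1,f_1]$ after unravelling the shift isomorphisms \eqref{eq:shift-ism} and the décalage \eqref{eq:decalage}; the sign $\tfrac12$ and the appearance of the bracket come from the two ways the length-two part of $\Delta^{(2)}$ feeds into $d''$. This is the Maurer-Cartan equation $df_1 + \tfrac12[f_1,f_1] = 0$ in $\Hom_{\CMO(X)}(C,\g[1])$ together with $f_1(1_C)=0$. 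Conversely, one verifies by induction on the filtration degree that if $f_1$ satisfies Maurer-Cartan then the reconstructed $f$ is a chain map: the inductive step is the standard computation that the components $f_n$ assemble compatibly, which is where the cocommutativity and coassociativity of $\Delta$, the graded Jacobi identity, and the Leibniz rule for $d_\g$ all get used simultaneously.

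The main obstacle I expect is bookkeeping rather than conceptual: keeping the Koszul signs straight through the shift isomorphism \eqref{eq:shift-ism} and the décalage \eqref{eq:decalage}, and confirming that the reconstruction series is well-defined internally in $\CMO(X)^*$ — i.e. that at each local section one only ever sums finitely many terms, which rests squarely on the unitality (exhaustiveness of $F^u_\bullet C$) hypothesis. Everything else is the Quillen argument from \cite[Appendix B, 5.3]{Q} transported verbatim to the monoidal category $\M = \CMO(X)^*$, using that $\Delta^{(\catS)}_*$ and the ambient category are abelian $k$-linear monoidal so the relevant (co)limits and sign conventions behave as in vector spaces. Since the statement is cited from \cite{Q}, I would in fact keep the write-up brief, pointing to \emph{loc.\ cit.} for the sign-chase and only spelling out the unitality point that is special to coalgebras internal to $\CMO(X)^*$.
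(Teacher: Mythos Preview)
The paper does not give its own proof of this statement: it is recorded as a \emph{Fact} with a bare citation to \cite[Appendix B, 5.3]{Q}, and the text moves on immediately. Your proposal is exactly the standard Quillen argument being cited --- cofreeness of $\Sym(\g[1])$ among conilpotent cocommutative coalgebras, reconstruction of $f$ from $f_1$ via iterated comultiplication, and identification of the chain-map condition with the Maurer--Cartan equation --- so there is nothing to compare and your sketch is correct and appropriate.
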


Next we recall the connecting morphism of Lie algebras
following \cite[\S2.3]{HS}.

Fix a unital abelian $k$-linear tensor category $\M$.
Let $\g$ be a dg Lie algebra in $\M$ and 
$\frkh \subset \g$ be a dg Lie ideal.
Denote by $i:\frkh \inj \g$ the injection and 
set $\C := \Cone(\varphi)$.
Thus $\C$ is a complex with 
\[
 \C^n = \frkh^{n+1} \oplus \g^{n}, 
 \quad
 d_{\C}(x,y) = (-d_{\frkh} x, \varphi(x)+d_{\g}y),
\]
where $d_{\C}$, $d_{\g}$ and $d_{\frkh}$ are 
the differentials of $\C$, $\g$ and $\frkh$.
$\C$ is a dg Lie algebra by
\[
 [(x,y),(x',y')] := 
 \bigl((-1)^p [y,x'] + [x,y'], [y,y']\bigr),\quad
 x,x' \in \frkh,\ y \in \g,\  y \in \g^p.
\]

Define the morphisms $\psi,\pi$ of $\bbZ$-graded objects by 
\[
 \psi: \C \longto \frkh[1],\quad (x,y) \longmapsto x; \qquad
 \pi:  \C \longto \g,\quad (x,y) \longmapsto y.
\]
Note that $\psi$ is a dg morphism but $\pi$ is not.
Denote by $T(V)=\oplus_{n\ge0} V^{\otimes_{\M} n}$ 
the tensor algebra of a complex $V$.
%of $\shO_X$-modules as in \S\ref{subsec:Quillen}.
Define the morphism $\wt{c}:T(\C) \to \g[1]$ of 
$\bbZ$-graded objects inductively by
\[
 \left.\wt{c} \, \right|_{T^0(\C)} :=0,   \quad
 \left.\wt{c} \, \right|_{T^0(\C)} :=\psi,
\]
and for $u \in T^n(\C)$ and $x \in \C$ by 
\[
 \wt{c}(x u) := (-1)^{|x|} [\pi(x), \wt{c}(u)].
\]
Then by \cite[\S2.3.3 Theorem]{HS}
$\wt{c}$ factors through the enveloping algebra $U(\C)$ 
of the Lie algebra $\C$,
and the obtained morphism $U(\C) \to \frkh[1]$ 
satisfies the Maurer-Cartan equation.
Thus by Fact \ref{fct:Q:MC} we have a morphism
\[
 c: U(\C) \longto C(\frkh)
\]
of dg coalgebras to the Chevalley complex of $\frkh$.
Since $U(\C) \simeq U(\g/\frkh)$ as a dg coalgebra,
we have in total 

\begin{fct}[{\cite[\S2.3, \S3.3]{HS}}]\label{fct:HS:conn}
For a dg Lie algebra $\g$ and dg Lie ideal $\frkh \subset \g$,
there is a morphism 
\[
 c: U(\g/\frkh) \longto C(\frkh)
\]
of unital cocommutative dg coalgebras in $\M$.
It is called the \emph{connecting morphism} of the pair $\frkh \subset \g$.
\end{fct}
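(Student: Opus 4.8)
The plan is to unwind the construction recalled in the paragraph preceding Fact~\ref{fct:HS:conn} and check that each arrow it produces makes sense and fits together, the point being that essentially everything is already proved in \cite{HS} and our task is to assemble the pieces. First I would form the cone $\C = \Cone(i)$ of the inclusion $i:\frkh \inj \g$, with the explicit componentwise description of $\C^n$, the differential $d_\C$, and the dg Lie bracket on $\C$ as written above; here one checks (routinely, using that $\frkh$ is an ideal) that the bracket on $\C$ satisfies graded skew-symmetry, the graded Jacobi identity, and the graded Leibniz rule, so that $\C \in \opLie(\CM)$-style data in the tensor category $\M$. Next I would introduce the graded maps $\psi:\C \to \frkh[1]$ and $\pi:\C \to \g$, note that $\psi$ is a chain map while $\pi$ is only a map of $\bbZ$-graded objects, and define $\wt{c}: T(\C) \to \g[1]$ by the inductive formula $\wt c(xu) = (-1)^{|x|}[\pi(x),\wt c(u)]$ with $\wt c|_{T^0} = 0$, $\wt c|_{T^1} = \psi$.

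The core step is to invoke \cite[\S2.3.3 Theorem]{HS}, which says that $\wt c$ descends along the projection $T(\C) \surj U(\C)$ to a map $U(\C) \to \frkh[1]$ and that this map solves the Maurer--Cartan equation in the convolution Lie algebra $\Hom_\M(U(\C),\frkh)$, i.e. lies in $\MC(U(\C),\frkh)$ in the notation of Fact~\ref{fct:Q:MC}. I would remark that although \cite{HS} works over a field, the argument is purely operadic/diagrammatic and only uses that $\M$ is a unital abelian $k$-linear tensor category with $k$ of characteristic $0$, so it applies verbatim in our $\M$. Then Fact~\ref{fct:Q:MC} (Quillen) turns this Maurer--Cartan element into a morphism $c: U(\C) \to C(\frkh)$ of unital cocommutative dg coalgebras in $\M$ whose degree-$1$ component recovers the MC element. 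Finally, I would recall the standard fact that the inclusion $\frkh \inj \g$ induces a quasi-isomorphism (indeed an isomorphism) $U(\C) \simeq U(\g/\frkh)$ of dg coalgebras --- this is the Poincar\'e--Birkhoff--Witt-type comparison, again available in any $k$-linear tensor category in characteristic $0$ and cited in \cite[\S3.3]{HS} --- and precompose to obtain the asserted morphism $c: U(\g/\frkh) \to C(\frkh)$.

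The only genuine obstacle I anticipate is \emph{not} any single computation but rather the bookkeeping needed to certify that the characteristic-$0$ arguments of \cite{HS} --- the factorization of $\wt c$ through $U(\C)$, the PBW identification $U(\C) \simeq U(\g/\frkh)$, and Quillen's bijection of Fact~\ref{fct:Q:MC} --- are stated with enough generality to hold in an arbitrary unital abelian $k$-linear tensor category $\M$, since we will later apply this with $\M = \MO(X^{\catS})^*$, whose tensor product $\otimes^*$ is not the pointwise one. Each of these is formal once one writes the relevant diagrams internally to $\M$ (using only the associativity, unit, and symmetry constraints, together with exactness and $k$-linearity), so I expect the proof to consist of: (i) verifying the dg Lie structure on $\C$; (ii) quoting \cite[\S2.3.3 Theorem]{HS} for the factorization and the MC property; (iii) quoting Fact~\ref{fct:Q:MC} to pass from the MC element to the coalgebra morphism; and (iv) quoting the dg-coalgebra isomorphism $U(\C)\simeq U(\g/\frkh)$. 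In fact, since the statement is explicitly attributed to \cite[\S2.3, \S3.3]{HS}, the honest answer is that the proof is a pointer: it suffices to observe that loc.\ cit.\ goes through word for word in the present tensor-categorical setting, and we omit the reproduction of the diagram chases.
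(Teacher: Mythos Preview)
Your proposal is correct and follows exactly the approach the paper takes: the paragraphs preceding Fact~\ref{fct:HS:conn} already carry out precisely your steps (i)--(iv), citing \cite[\S2.3.3 Theorem]{HS} for the factorization through $U(\C)$ and the Maurer--Cartan property, invoking Fact~\ref{fct:Q:MC} to pass to a coalgebra map, and then using the identification $U(\C)\simeq U(\g/\frkh)$. Your added remark about verifying that the \cite{HS} arguments transport to a general unital abelian $k$-linear tensor category is a legitimate point the paper leaves implicit; note only that $U(\C)\simeq U(\g/\frkh)$ is a priori a quasi-isomorphism (via PBW in characteristic $0$) rather than a strict isomorphism.
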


By the construction, the first order part $c^1$ of $c$ 
is the coboundary map in the long exact sequence
\[
 0 \longto \frkh \longto \g \longto \g/\frkh 
    \xrightarrow{\ c^1 \ } \frkh[1] \longto \cdots,
\]
which is the origin of the name `connecting morphism'.

\begin{rmk*}
Instead of the assumption that $\frkh$ is a dg Lie ideal,
one may ask whether there is a dg Lie algebra structure on 
$\Cone(\varphi)$ of general dg Lie algebra morphism $\varphi$.
\cite{FM} gives the answer that $\Cone(\varphi)$ 
has  no dg Lie algebra structure but has a natural $L_{\infty}$-structure. 
%The dg Lie structure we used is compatible with theirs with 
%$\varphi:\g \surj \g/\frkh$.
Note also that 
\cite{R2,R3} discussed a similar situation called  Lie atoms.
\end{rmk*}

%%%%%%%%%%%%%%%%%%%%%%%%%%%%%%%%%%%%%%%%%%%%%%%%%%%%%%%%%%%%%%%%%%%%%%
\subsection{The construction of higher Kodaira-Spencer maps}
\label{subsec:hks:map}

Let us give a proof of Theorem \ref{thm:hks:map}.
We start with the remark that 
the case $n=1$ recovers the classical Kodaira-Spencer theory 
by the universality property.
Namely, given a first order deformation  $\X_1$ of $X$ over $S_1=\Spec(R_1)$,
we have the Kodaira-Spencer map 
\begin{equation}\label{eq:KS:original}
 \Theta_{S_1} \longto R \pi^1_* \Theta_{\X_1/S_1}
\end{equation}
with $\pi:\X_1 \to S_1$ the projection.
By the classical theory, we know that this map coincides with 
the coboundary map  $c^1$ in 
\[
 0 \longto \pi_* \Theta_{\X_1} \longto \pi_* \Theta_{\X_1} 
   \longto \pi_* \pi^*\Theta_{S_1} \simeq \Theta_{S_1}
   \xrightarrow{\ c^1 \ } R \pi^1_* \Theta_{\X_1/S_1} \longto \cdots 
\]
induced from the short exact sequence
\[
 0 \longto \Theta_{\X_1/S_1} \longto \Theta_{\X_1} 
   \longto \pi^*\Theta_{S_1} \longto 0.
\]
Note also 
$(R\pi^1_* \Theta_{\X_1/S_1})_p \simeq H^1(X,\Theta_X) 
 \simeq \Gamma(X^{\frkS},J^1_n)$,
where $p \in S_1$ corresponds to the original $X$.
($J_n^1$ is the degree $1$ part of the complex $J_n$.)
Thus we have   
\begin{equation}\label{eq:isom:R1=J1}
 R\pi^1_* \Theta_{\X_1/S_1} \longsimto
 \calH ^0(J_n^1 \otimes R_1) = \calH^0(\alpha_1^* J_n^1)
\end{equation}
for any $n \in \bbZ_{\ge1}$. %, 
%where 
%\[
% \alpha_1 := p_n \circ \alpha_n: \, 
% \Spec(R_1) \longto \Spec(R^u_1) \longto \Spec(R^u_n)
%\]
%is the composition of the morphism $\alpha_n$ in Theorem \ref{thm:hks} (2) 
%and the projection $p_1: R^u_n \surj R^u_1$.
Putting \eqref{eq:KS:original} and \eqref{eq:isom:R1=J1} together, 
we obtain
\[
 %\kappa^{\le 1}:=\id \oplus \kappa^1: \, 
 \Diff_{S_1}^{\le 1} = \shO_{S_1} \oplus \Theta_{S_1}
 \longto \shO_{S_1} \oplus \calH^0(\alpha_1^* J_n^1).
\]

%in the previous \S\ref{subsec:hks:prf} 
%coincides with the image $c^1(\partial)$,

Now let us construct a higher analog.
Let $\pi: \X^u_n \to S^u_n=\Spec(R^u_n)$ 
be the universal deformation of order $n$.
Applying  Fact \ref{fct:HS:conn} to 
$\pi_* \Theta_{\X^u_n/S^u_n} \longto \pi_* \Theta_{\X^u_n}$ 
and $\M=\MO(S^u_n)$,
we have a morphism
\[
 c^{\le n}: U(\Theta_{S^u_n}) \longto C(\pi_* \Theta_{\X^u_n/S^u_n}).
\]
On the other hand, we have
\[
 U(\Theta_{S^u_n}) \longsimto \Diff^{\le n}(S^u_n), \quad
 C(\pi_* \Theta_{\X^u_n/S^u_n}) \longsimto
 \shO_{S^u_n} \oplus \calH^0(J_n).
\]
Thus $c^{\le n}$ gives the desired morphism
\[
 \kappa^{\le n}: \Diff^{\le n}(S^u_n) \longto 
 \shO_{S^u_n} \oplus \calH^0(J_n).
\]

%%%%%%%%%%%%%%%%%%%%%%%%%%%%%%%%%%%%%%%%%%%%%%%%%%%%%%%%%%%%%%%%%%%%%%
%%%%%%%%%%%%%%%%%%%%%%%%%%%%%%%%%%%%%%%%%%%%%%%%%%%%%%%%%%%%%%%%%%%%%%
\section{The moduli space of \texorpdfstring{$G$-bundles}{G-bundles}}
\label{sect:hks:G-bundle}

We address an analog of the higher Kodaira-Spencer map in \S\ref{sect:hks:R} 
for the moduli space of $G$-bundles with $G$ an algebraic group.
Our strategy basically follows \cite{HS}.

%%%%%%%%%%%%%%%%%%%%%%%%%%%%%%%%%%%%%%%%%%%%%%%%%%%%%%%%%%%%%%%%%%%%%%
\subsection{Lie algebroid}

Let $X$ be a smooth scheme over $k$.
A (dg) \emph{Lie algebroid over $X$} 
(or (dg) \emph{Lie $\shO_X$-algebroid}) is a sheaf $\shL$ 
of (dg) Lie $k$-algebras on $X$ together with a structure of 
a left $\shO_X$-module and 
a morphism $\tau: \shL \to \Theta_X=\Der_{k}(\shO_X)$ of (dg) Lie $k$-algebras 
and $\shO_X$-modules such that 
$[a,f b ]=f [a,b]+\tau(a)(f)b$ for any $a,b \in \shL$ and $f\in\shO_X$.
The morphism $\tau$ is called the \emph{anchor} of $\shL$.
%Denote $\shL_{(0)} := \ker \tau$,
%which is an $\shO_X$-Lie algebra and a dg Lie ideal in $\shL$.

For a Lie $\shO_X$-algebroid $\shL$, 
a left $\shL$-module is an $\shO_X$-module $M$ with an action of $\shL$ 
as a Lie $k$-algebra with compatibility condition
$l(f m)=l(f)m+f(l m)$ and $(f l)m =f(l m)$ for 
any $f \in \shO_X$, $m \in M$ and $l \in \shL$.
The dg version is similarly defined.

For a Lie algebroid over $X$,
denote by $U_{\shO_X}(\shL)$ the \emph{twisted enveloping algebra}.
Let us recall its definition.
Denote by $U_k(\shL)^{+}$ the augmented ideal 
of the universal enveloping algebra
$U_k(\shL)$ of $\shL$ as a $k$-Lie algebra.
Define $U_{\shO_X}(\shL)^{+}$ to be  the quotient of $U_k(\shL)^{+}$ 
by the two-sided ideal generated by 
$a \cdot f b - f a \cdot b - \tau(a)(f)b$ for 
all $a,b \in \shL$ and $f \in \shO_X$.
Then $U_{\shO_X}(\shL) := \shO_X \oplus U_{\shO_X}(\shL)^{+}$
with the unital algebra structure given by 
$f\cdot a = f a$ and $a \cdot f = f a + \tau(a)(f)$ 
for $a \in \shL$ and $f\in\shO_X$.

$U_{\shO_X}(\shL)$ has a filtration $F_\bullet U_{\shO_X}(\shL)$
coming from the standard one on $U_k(\shL)$.
It also has  a coalgebra structure 
induced by that on $U_k(\shL)$.

\begin{eg}\label{eg:Lalgd:Theta}
We can take $\shL = \Theta_X$.
Then a left $\shL$-module is nothing but a left $\shD_X$-module.
We also have $U_{\shO_X}(\Theta_X) = \Diff_X$,
the sheaf of $\shO_X$-differential operators,  
and $F_n U_{\shO_X}(\Theta_X) = \Diff_X^{\le n}$.
\end{eg}

%%%%%%%%%%%%%%%%%%%%%%%%%%%%%%%%%%%%%%%%%%%%%%%%%%%%%%%%%%%%%%%%%%%%%%
\subsection{Higher Kodaira-Spencer map for Lie algebroid}
\label{subsec:hks:G:seq}

Let us explain the construction of higher Kodaira-Spencer maps 
by \cite[\S7.1]{HS} with the help of Jacobi complexes.
%See also \cite[\S2.9.5]{BD}.

Let $\pi:\X \to S$ be a smooth separated map of schemes over $k$.
We have the short exact sequence
\[
 0 \longto \Theta_{\X/S} \longto \Theta_{\X} \xrightarrow{\ \ve \ } 
 \pi^* \Theta_S \longto 0
\]
Denote by $\pi^{-1}$ the functor of set-theoretical inverse image. 
Hence $\pi^{-1}\Theta_S \subset \pi^*\Theta_S$ 
is a Lie $\pi^{-1}\shO_S$-algebra.
Set  
\begin{equation}\label{eq:Theta_pi}
 \Theta_{\pi} := \ve^{-1}(\pi^{-1} \Theta_S) \subset \Theta_{\X},
\end{equation}
which is the sheaf of vector fields on $\X$ preserving $\pi$.
we have a short exact sequence
\begin{equation}\label{seq:Theta}
 0 \longto \Theta_{\X/S} \longto \Theta_{\pi} \xrightarrow{\ \ve \ } 
 \pi^{-1} \Theta_S \longto 0
\end{equation}
of Lie $k$-algebras and $\pi^{-1}\shO_S$-modules.

Let $\shA$ be a dg Lie algebroid over $\X$ such that 
the anchor $\tau:\shA \to \Theta_{\X}$ is an epimorphism,
namely, the zeroth part $\tau^0:\shA^0 \to \Theta_{\X}$ is surjective.
Setting $\shA_{\X/S} := \tau^{-1}(\Theta_{\X/S})$ and 
$\shA_{\pi} := \tau^{-1}(\Theta_{\pi})$,
we have a short exact sequence
\begin{equation}\label{seq:Lalgd}
 0 \longto \shA_{\X/S} \longto \shA_{\pi} \longto 
 \pi^{-1} \Theta_S \longto 0
\end{equation}
of Lie $k$-algebras and $\pi^{-1}\shO_S$-modules.

Applying the functor $J(-)$ to the exact sequence \eqref{seq:Lalgd},
we have
\begin{equation}\label{seq:Lalgd:J}
 0 \longto J(\shA_{\X/S}) \longto J(\shA_{\pi}) \longto 
 J(\pi^{-1} \Theta_S) \longto 0
\end{equation}
of complexes over $\X^{\catS}$.
On the other hand, we have a canonical adjunction map
\[
 \Theta_S \longto J(\pi^{-1} \Theta_S).
\]
Taking the pull-back of \eqref{seq:Lalgd:J} by this adjunction map,
we have a short exact sequence
\[
 0 \longto J(\shA_{\X/S}) \longto \oln{\shA}_{\pi} \longto 
 \Theta_S \longto 0 
\]
with 
\[
 \oln{\shA}_{\pi} :=
 J(\shA_{\pi}) \otimes_{J(\pi^{-1}\Theta_S)} \Theta_S.
\]
$J(\shA_{\pi})$ and $\Theta_S$ induce on $\oln{\shA}_{\pi}$ 
the structure of dg Lie $k$-algebra and $\shO_{\X}$-module.
Thus we have the pair
\[
 J(\shA_{\X/S}) \longinj \oln{\shA}_{\pi} 
\]
of a dg Lie $\shO_{\X}$-algebra and its dg Lie ideal.

Then we can apply the construction in \S\ref{subsec:connect} 
of the connecting morphism to this pair.
The discussion in \S\ref{subsec:hks:prf} can also be applied to 
the present situation if the universal family exists,
and we have a description of the universal family of deformations 
of $\shA_{\X/S}$.

%%%%%%%%%%%%%%%%%%%%%%%%%%%%%%%%%%%%%%%%%%%%%%%%%%%%%%%%%%%%%%%%%%%%%%
\subsection{The \texorpdfstring{$G$-bundle}{G-bundle} case}
\label{subsec:G-bundle:thm}

Let $X$ be a smooth $k$-scheme, $G$ be a semi-simple algebraic $k$-group,
and $p:P \to X$ be a $G$-torsor over $X$.
Set 
\[
 \shA_{P} := (p^{-1} \Theta_X)^G,
\] 
namely $\shA_{P}$ is the sheaf such that $\shA_{P}(U)$ 
is the space of $G$-invariant vector fields on $p^{-1}(U)$.
$p$ induces a surjection $\tau: \shA_P \to \Theta_X$,
which makes $\shA_{P}$ a Lie algebroid over $X$.
If $H^0(X,\shA_P)=0$ and 
there is a universal deformation of $(X,P)$.

Now the argument in the previous subsection can be applied to 
the Lie algebroid $\shA_{P}$.

\begin{thm}[{\cite{HS}}]\label{thm:G-bundle}
Assume $H^0(X,\shA_P)=0$.
Set
\[
 R^u_n := k \oplus \Gamma(\R(X),J_n(\shA_{P})).
\]
\begin{enumerate}
\item 
For each $n \in \bbZ_{\ge1}$,
there is a flat deformation $\Y^u_n$ of $(X,P)$ over $\Spec(R^u_n)$.
The data $\{(\Y^u_n,R^u_n)\}_{n \ge1}$  
form a direct system with the limit $(\Y^u,R^u)$, 
a flat formal deformation $\shP^u$ over $\Spf(R^u)$.
Moreover $\Y^u_n$ is universal 
in the sense of Theorem \ref{thm:hks} (2).

\item
%For any flat deformation $\shP_n$ of $(X,P)$ 
%over an Artin local $k$-algebra $R_n$ of exponent $n$,
We have a natural morphism
\[
 \kappa^{\le n}: 
 \Diff^{\le n}_{S^u_n} \longto 
 \shO_{S^u_n} \oplus \calH^0(J_n(\shA_{P}))
\]
of cocommutative dg coalgebras over $S^u_n := \Spec(R^u_n)$.
\end{enumerate}
\end{thm}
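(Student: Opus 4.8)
The plan is to mimic the argument of \S\ref{subsec:hks:prf} and \S\ref{subsec:hks:map} with the tangent sheaf $\Theta_X$ replaced by the Lie algebroid $\shA_P$, using that the whole machinery of Jacobi complexes applies verbatim once one checks that $\shA_P \in \opLie(X)$, i.e.\ that its terms are $\shO_X$-flat and that $\calH^n(\shA_P)=0$ for $n\gg0$. For a $G$-torsor $p:P\to X$ the sheaf $\shA_P=(p_*\Theta_P)^G$ is a vector bundle on $X$ (the Atiyah algebroid), hence flat, and concentrated in degree $0$, so both conditions are immediate. Thus $J_n(\shA_P)$ and $R^u_n=k\oplus(\Gamma(\R(X),J_n(\shA_P)))^*$ are defined exactly as in Definition~\ref{dfn:Jacobi} and \S\ref{subsec:jac:dfn}, and $R^u_n$ is an Artin local $k$-algebra of exponent $n$ by the coproduct structure on $J_n(\shA_P)$.

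For part (1), first I would take a finite affine cover $\calU=\{U_\alpha\}$ of $X$ and form the Dolbeault-style resolution $\shA_{P,\calU}:=\shQ_{\calU}\otimes_{\shO_X}\shA_P$ as in \S\ref{subsec:resol}; by the Lemma there this is a dg Lie algebroid quasi-isomorphic to $\shA_P$ and satisfying the conditions of Definition~\ref{dfn:jac:Lie(X)}, so $J_n(\shA_{P,\calU})\simto J_n(\shA_P)$ by Lemma~\ref{lem:Jacobi:hot}. Next, given a flat deformation $(\X_n,\shP_n)$ of $(X,P)$ over an Artin algebra $R_n$ of exponent $n$, the descent data of $\shP_n$ relative to $\calU$ produce, exactly as in \S\ref{subsec:hks:prf}, Čech $0$-cochains $s_\alpha\in\shA_{P,\calU}^0(U_\alpha)\otimes\frkm_n$ whose differences $\exp(s_\alpha)\exp(-s_\beta)$ give the transition cocycle; the cocycle condition yields a Maurer--Cartan element $u_n\in J_n^1(\shA_{P,\calU})\otimes\frkm_n$ and hence a class $[v_n]\in\Gamma(X^{\catS},J_n(\shA_P))\otimes\frkm_n$ independent of the choices. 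Conversely, from a class $[v]$ one twists $\shA_{P,\calU}$ by $u_n$ via Fact~\ref{fct:MC} and takes $\ker(d_u)$ in degree $0$ to recover a flat deformation over $R_n$; flatness follows by the argument of \cite[Lemma~4.1]{R1}. Taking the identity $R^u_n\simto R^u_n$ gives the universal family $\Y^u_n$, and compatibility with the filtration $J_\bullet$ passes to the limit $\Y^u$ over $\Spf R^u$; universality is then automatic from the bijectivity of the correspondence $(\X_n,\shP_n)\leftrightarrow[v_n]$.

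For part (2), I would repeat \S\ref{subsec:hks:map} with $\Theta_{\X^u_n}$ replaced by $\tau^{-1}(\Theta_{\X^u_n})=\shA_{P,\pi}$-type data. Concretely, let $\pi:\Y^u_n\to S^u_n$ be the universal family; applying the Lie-algebroid sequence \eqref{seq:Lalgd} and then the functor $J(-)$ and the adjunction $\Theta_{S^u_n}\to J(\pi^{-1}\Theta_{S^u_n})$ as in \S\ref{subsec:hks:G:seq}, one obtains the pair $J(\shA_{\X/S})\hookrightarrow\oln{\shA}_\pi$ of a dg Lie $\shO_{\Y^u_n}$-algebra and a dg Lie ideal, to which Fact~\ref{fct:HS:conn} associates a connecting morphism $c:U(\oln{\shA}_\pi/J(\shA_{\X/S}))=U(\Theta_{S^u_n})\to C(J(\shA_{\X/S}))$ of unital cocommutative dg coalgebras over $S^u_n$. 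One then identifies $U(\Theta_{S^u_n})\simeq\Diff^{\le n}_{S^u_n}$ (Example~\ref{eg:Lalgd:Theta}) and $C(J(\shA_{\X/S}))\simto\shO_{S^u_n}\oplus\calH^0(J_n(\shA_P))$ — the latter using $R\pi^1_*\shA_{P,\X/S}\simeq\Gamma(X^{\catS},J_n^1(\shA_P))$ at the base point — to get the asserted $\kappa^{\le n}$.

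\textbf{Main obstacle.} The routine translations of \S\ref{subsec:hks:prf}--\S\ref{subsec:hks:map} are harmless; the delicate points are (i) verifying that the descent cocycles of a $G$-torsor deformation really live in $\shA_{P,\calU}^0$ and that the Atiyah sequence is compatible with the twisting construction, so that $\ker(d_u)$ in degree $0$ reconstructs a genuine $G$-torsor and not merely an $\shO$-algebra, and (ii) the identification $C(J(\shA_{\X/S}))\simto\shO_{S^u_n}\oplus\calH^0(J_n(\shA_P))$, which requires comparing the Chevalley differential on $\oln{C}$ of the algebroid-valued Jacobi complex with the degree-$0$ cohomology sheaf and checking that the higher $\Sym^i$ contributions assemble correctly over $S^u_n$. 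I expect (ii) to be the real work, and it is where one genuinely uses that $\shA_P$ is a bundle (so the resolution $\shA_{P,\calU}$ behaves well) together with the assumption $H^0(X,\shA_P)=0$ to kill the spurious degree-$0$ cohomology.
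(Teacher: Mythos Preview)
Your proposal is correct and follows essentially the same approach as the paper: the paper's ``proof'' of Theorem~\ref{thm:G-bundle} is the single sentence ``the argument in the previous subsection can be applied to the Lie algebroid $\shA_P$'', and that previous subsection (\S\ref{subsec:hks:G:seq}) sets up exactly the Lie-algebroid sequence \eqref{seq:Lalgd}, applies $J(-)$ and the adjunction to obtain the pair $J(\shA_{\X/S})\hookrightarrow\oln{\shA}_\pi$, and then invokes the connecting morphism of \S\ref{subsec:connect} together with the universal-family construction of \S\ref{subsec:hks:prf}---precisely your plan. Your proposal is in fact more detailed than the paper's own treatment, and the two ``obstacles'' you flag are real but are absorbed in the paper by the blanket reference to \cite{HS} and \cite[Lemma~4.1]{R1}.
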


We have an $X$-fixed version, namely 
the deformation problem of the $G$-torsors over a fixed scheme $X$.
Then we can take the Jacobi complex $J(\g_P)$ of 
\[
 \g_{P} := \ker(\tau: \shA_P \longto \Theta_X),
\]
and apply the same argument.
%As a result we have the same statement as in the above theorem.

%%%%%%%%%%%%%%%%%%%%%%%%%%%%%%%%%%%%%%%%%%%%%%%%%%%%%%%%%%%%%%%%%%%%%%
%%%%%%%%%%%%%%%%%%%%%%%%%%%%%%%%%%%%%%%%%%%%%%%%%%%%%%%%%%%%%%%%%%%%%%
\section{Flat connection on the Jacobi homology}
\label{sect:BV}

In \cite{R2} Ran gave a general construction of (projective) flat connections
related to the universal deformation.
His strategy can be stated in the following steps. 
\begin{enumerate}
\item 
Translate the heat equation in Hitchin's construction \cite{H} of 
projective flat connections
into the language of dg Lie algebras on the Ran space.
As a result one obtains the ``connection algebra",
which is the Jacobi complex of a certain Lie atom. 
 
\item
The connection algebra has a canonical trivialization over 
the universal deformation algebra.
This trivialization gives the desired connections.
\end{enumerate}

In this note we give another approach.
Our strategy is to take an analog of flat connections 
on chiral homology due to Beilinson and Drinfeld in their theory of 
chiral algebra \cite[\S4]{BD}.
Let us briefly explain their argument.

Chiral algebras are certain Lie algebra objects 
in a non-standard tensor category of $\shD$-modules 
on a fixed curve $X$.
One can consider a kind of reduced Chevalley complex 
$\oln{C}(A)$ (called the Chevalley-Cousin complex) of a chiral algebra $A$.
$C(A)$ can be seen as a $\shD$-module on the Ran space $\R(X)$.
Chiral homology $C^{\ch}(A)$ is defined to be $R\Gamma(\R(X),\DR(C(A)))$,
where $\DR(-)$ is the de Rham functor on $\R(X)$ 
(we omit the precise definition).
$C^{\ch}(A)$ is  a generalization of the conformal block 
in conformal field theory. 

The main ingredient of their argument 
is the BV algebra structure on the Chevalley complex $\oln{C}(A)$,
which yields a canonical trivialization on Lie algebra action on $\oln{C}(A)$.
A certain set of assumptions called ``the package" in \cite[\S4.5]{BD}
enables one to construct flat connections on $C^{\ch}(A)$ using 
this canonical trivialization. 

One finds that their situation looks similar to ours,
namely they handle Lie algebra objects on the Ran space.
Moreover, the BV algebra structure exists for the Chevalley complex 
of Lie algebra in any tensor category, 
as we explain in \S\ref{subsec:BV:dfn}.
Thus it is natural to expect that one can take an analog of 
the arguments in \cite{BD}.

%%%%%%%%%%%%%%%%%%%%%%%%%%%%%%%%%%%%%%%%%%%%%%%%%%%%%%%%%%%%%%%%%%%%%%
\subsection{BV algebras}
\label{subsec:BV:dfn}

Following \cite[4.1.6]{BD}, we recall the notion of 
\emph{Batalin-Vilkovisky algebras}, or \emph{BV algebras} for short.
See also \cite[\S13.7]{LV}, 
although its presentation has some minor difference from ours.

Roughly speaking, a BV $k$-algebra $C$  is a 
$1$-Poisson (or Gerstenhaber) dg $k$-algebra.
%namely, a dg $k$-algebra structure on $C$ together with 
%a Poisson algebra structure on $C[-1]$.
Since we will later consider BV algebras 
in several different tensor categories,
let us spell out the definition using the language of operads.

\begin{dfn*}
The \emph{BV operad $\opBV$} is defined to be a dg $k$-operad 
which is inhomogeneous quadratic (in the sense of \cite[\S7.8]{LV})
generated by the differential $d$ of degree $1$, 
the product $m= (-\cdot-)$ of degree $0$
and the $1$-Poisson bracket $c=\{-,-\}$ of degree $1$.
The relations consists of $d^2=0$, 
the $1$-Poisson operad relation for $m$ and $c$,
and the $c = d \circ m$.
\end{dfn*}

Let $\M$ be an abelian $k$-linear tensor category 
with the tensor structure $\otimes_{\M}$.
%
%Let $\M=(\M,\otimes_{\M})$ be as mentioned in the beginning of this section. 
We denote by $\CM$ the dg category of complexes in $\M$.
The induced tensor product on $\CM$ is 
denoted by the same symbol $\otimes_{\M}$.
Using the framework of algebras over operad 
(see \cite[Chap.\ 5]{LV} for example), we have 

\begin{dfn*}
A \emph{BV algebra in $\M$} means a complex $C$ in $\M$
together with a morphism $\opBV \to \opEnd_C$ of dg operads,
where $\opEnd_C := \oplus_n \Hom_{\CM}(C^{\otimes_{\M} n},C)$ 
is the endomorphism operad on $C$.
\end{dfn*}

For example, letting $\M$ be the category of $k$-vector spaces,
a BV $k$-algebra $C$ is a complex $(C,d_C)$ of $k$-vector spaces 
together with a Poisson structure on the graded vector space $C[-1]$
consisting of the product $\cdot_C$ and the Poisson bracket $\{,\}_C$.
These data should satisfy the relation 
$\{,\}_C = d_C \cdot_C - \cdot_C d_{C \otimes C}$.
This is what we mentioned roughly in the beginning.

Let $\M$ be an abelian $k$-linear tensor category again,
and $C=(C,d,m,c)$ be a BV algebra in $\M$.
Then $L := C[-1]$ is naturally a Lie algebra in 
the category $\CM$ %of complexes in $\M$ 
with the Lie bracket $c$,
and the BV structure yields an $L$-action on $C$.
Then we set 
\begin{equation}\label{eq:bv:dagger}
 L_{\dagger} := \Cone(\id_L) \in \CM,
\end{equation}
which is a contractible complex.
As a $\bbZ$-graded object we have 
$L_{\dagger} \simeq L[1] \oplus L = C \oplus L$.
$L_{\dagger}$ is naturally a Lie algebra in $\CM$ 
(see \S\ref{subsec:connect}),
and the $L$-action on $C$ extends to the $L_{\dagger}$-action 
with the component $L[1]=C$ acting on $C$ by $m$.

%%%%%%%%%%%%%%%%%%%%%%%%%%%%%%%%%%%%%%%%%%%%%%%%%%%%%%%%%%%%%%%%%%%%%%
\subsection{Filtered BV algebras}
\label{subsec:BV:filt}

Let $\M$ be an abelian $k$-linear tensor category as before.
One can consider $\opBV$ as a dg filtered operad
with the increasing stupid filtration $\opBV_n := \opBV^{\ge -n}$. 
Then one has 

\begin{dfn*}
A \emph{filtered BV algebra} in $\M$ is a BV algebra $C$ in $\M$ 
together with an increasing filtration $C_{\bullet}$ 
which is compatible with the BV algebra structure.
Denote by $\opBV(\M)$ the category of 
filtered BV algebras $C$ in $\M$
such that $C_{-1}=0$ and $\cup_{n \in \bbZ} C_n = C$.
Also let $\oln{\opBV}(\M)$ be the full subcategory in $\opBV(\M)$ 
consisting of objects $C$ such that $C_0=0$.
\end{dfn*}

%In particular, 
%the associated graded $\gr_{\bullet}(C)$ is a $1$-Poisson algebra.

One can naturally augment $\opBV$ and obtains a dg operad $\opBV_u$
which encodes the structure of unital BV algebras.
In other words, we introduce

\begin{dfn*}
Assume that $\M$ has a unit and is a symmetric monoidal category.
We define a \emph{unital BV algebra} $C$ in $\M$ to 
be a BV algebra $(C,d,m,c)$ in $\M$ 
having a unit $1 \in C^0$ with respect to $m$ such that $d(1)=0$.
In the filtered setting we assume $1 \in C_0$.
The subcategory in $\opBV(\M)$  consisting of unital filtered BV algebras 
is denoted by $\opBV_u(\M)$.
\end{dfn*}

By \cite[\S4.1.7.\ Proposition]{BD},
$\opBV(\M)$, $\oln{\opBV}(\M)$ and $\opBV_u(\M)$ are closed model categories 
with weak equivalences being filtered quasi-isomorphisms 
and fibrations being morphisms $f$ such that $\gr(f)$ is surjective.
Thus we have the corresponding homotopy categories
$\Ho\opBV(\M)$, $\Ho\oln{\opBV}(\M)$ and $\Ho\opBV_u(\M)$
(recall Definition \ref{dfn:Ho}).

%%%%%%%%%%%%%%%%%%%%%%%%%%%%%%%%%%%%%%%%%%%%%%%%%%%%%%%%%%%%%%%%%%%%%%
\subsection{Chevalley complex}
\label{subsec:BV:chv}

Let $\M$ be as before.
For a filtered BV algebra $C$ in $\M$, 
the shifted filter $C_1[-1]$ is naturally a dg Lie algebra in $\M$ 
with respect to $c=\{,\}$.
The correspondence $C \mapsto C_1[-1]$ yields functors
\[
 \oln{\opBV}(\M) \longto \opLie(\CM), \quad 
 \opBV_u(\M)     \longto \opLie(\CM),
\]
where $\opLie(\CM)$ denotes the category of dg Lie algebras in $\M$.
In the case of $\opBV_u(\M)$, one should suppose $\M$ to have a unit.

\begin{rmk*}
If we have left adjoints of these functors,
then they will give us examples of BV algebras automatically.
Indeed by \cite[\S4.1.8]{BD} we have left adjoints, 
and they are nothing but constructing the Chevalley complexes
as the title of this subsection implies.
Let us give an explanation of the way to find this answer.

Recall that the forgetting functor $V_0$ from 
Poisson algebras (say over $k$) to
Lie algebras has a left adjoint $S$ assigning to a Lie algebra $L$ 
the symmetric algebra $\Sym(L)$ together with the Kostant-Kirillov 
Poisson bracket.
The adjoin pair $(S,V_0)$ is the ``classical part'' of 
the following pair $(U,V)$.
$V$ is the forgetting functor from associative algebras to 
Lie algebras with the same vector space and 
the commutator as the Lie bracket.
$U$ is the functor assigning to a Lie algebra $L$ 
the universal enveloping algebra $U(L)$.
Since  associative algebra degenerate to Poisson algebras,
and since $U(L)$ is a deformation quantization of $S(L)$,
we can say $(S,V_0)$ is the classical part of $(U,V)$. 

Thus one can guess that the desired left adjoints 
are given by $L \mapsto \Sym(L[1])$, where 
the shift $[1]$ is necessary because we are considering 
a $1$-Poisson structure.
$\Sym(L[1])$ is nothing but the Chevalley complex of $L$.
\end{rmk*}

By \cite[\S4.1.8]{BD}, the functors $C \mapsto C_1[-1]$ 
have left adjoints
\[
 \oln{C}:\, \opLie(\CM) \longto \oln{\opBV}(\M),\quad
 C:      \, \opLie(\CM) \longto \opBV_u(\M),
\]
where for $L \in \opLie(\CM)$ the corresponding 
$C(L)$ is given by the Chevalley complex of $L$,
and $\oln{C}(L)$ is the reduced Chevalley complex of $L$.
Thus as $\bbZ$-graded objects in $\M$ we have 
\[
 C(L) = \Sym(L[1]) = \oplus_{n\ge0}\Sym^n(L[1]),\quad
 \oln{C}(L) = \Sym^{\ge 1}(L[1]).
\]
The filtration is given by $C(L)_n := \Sym^n(L[1])$.
The differential and the $1$-Poisson bracket are determined by 
the condition that the embedding 
$L = \Sym^1(L[1])[-1] \inj C[-1]$ 
is a morphism of  dg Lie algebras.
This BV structure respects the filtration.

These functors preserve filtered quasi-isomorphisms,
so that they descent to homotopy categories
and yield adjoint pairs 
\[
 \Ho\opLie(\M) \xrightleftharpoons[]{\ \ \oln{C} \ \ } \Ho\oln{\opBV}(\M),
 \quad
 \Ho\opLie(\M) \xrightleftharpoons[]{\ \ C \ \ } \Ho\opBV_u(\M)
\]
on homotopy categories.

%\[
%a \xrightleftharpoons[unten]{oben} b \rightleftharpoons c
%\]

As a corollary, we find that the Jacobi complex has a BV structure.

%%%%%%%%%%%%%%%%%%%%%%%%%%%%%%%%%%%%%%%%%%%%%%%%%%%%%%%%%%%%%%%%%%%%%%
\subsection{Rigidity}
\label{subsec:BV:rigid}

Let $X$ be a separated quasi-projective $k$-scheme and $\g,L \in \opLie(X)$ as 
in Definition \ref{dfn:jac:Lie(X)}.

Suppose $L$ acts on $\g$.
Then $\Gamma(X,L)$ acts on $\g$ by derivation,
and further $\Gamma(X,L)$ acts on the Jacobi complex $J(\g)$.
One can replace $\g$ and $L$ by their resolutions.
For example, take the \v{C}ech complex $\shQ$ in \S\ref{subsec:resol}
and consider $\g_{\shQ} := \g \otimes_{\shO_X} \shQ$, and 
$L_{\shQ} := L \otimes_{\shO_X} \shQ$ instead of $\g$ and $L$.
We still have a $\Gamma(X,L_{\shQ})$-action on 
$J(\g_{\shQ}) = J(\g)_{\shQ}$.
Since $L_{\shQ}$ is canonically identified with $L$ in $\Ho\opLie(X)$,
$\Gamma(X,L_{\shQ})$ is identified with $R \Gamma(X,L)$.
Thus the homotopy Lie algebra $R \Gamma(X,L)$ acts on $J(\g)$.

Now we have an analog of the rigidity property of Chevalley-Cousin complex 
in \cite[\S 4,5.2]{BD}.

\begin{lem}\label{lem:bv:rigid}
Suppose that the $L$-action comes from 
a Lie algebra morphism $\iota: L \to \g$.
Then the $R\Gamma(X,L)$-action on $J(\g)$ is canonically 
homotopically trivialized.
\end{lem}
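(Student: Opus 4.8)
The plan is to extract the trivialization from the BV structure on $J(\g) = \oln{C}(\Delta^{(\catS)}_*(\g))$ established in \S\ref{subsec:BV:chv}. Recall that for any filtered BV algebra $C$ in a tensor category, the Lie algebra $L_\dagger = \Cone(\id_L)$ (with $L = C[-1]$) acts on $C$, with the extra copy $L[1] = C$ acting by multiplication $m$; and since $L_\dagger$ is contractible, any action factoring through $L_\dagger$ is canonically homotopically trivial. So the strategy is: first, promote the $R\Gamma(X,L)$-action on $J(\g)$ to an action of (a model for) $R\Gamma(X, L_\dagger)$ where $L_\dagger$ is formed relative to the BV structure on $J(\g)$; second, observe $L_\dagger$ is contractible hence $R\Gamma(X,L_\dagger) \simeq 0$; third, conclude that the original action, being the restriction along $R\Gamma(X,L) \to R\Gamma(X,L_\dagger) \simeq 0$, is trivialized.

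First I would make precise what BV Lie algebra is acting. Take the \v{C}ech-style resolution $\shQ = \shQ_{\calU}$ of \S\ref{subsec:resol} and work with $\g_{\shQ} = \g \otimes_{\shO_X}\shQ$, $L_{\shQ} = L \otimes_{\shO_X}\shQ$, so that $\Gamma(X, L_{\shQ})$ computes $R\Gamma(X,L)$ and acts on $J(\g_{\shQ}) = J(\g)_{\shQ}$, which is the Jacobi complex and carries the BV structure from \S\ref{subsec:BV:chv}. Its underlying Lie algebra (the shifted first filtration piece) is $\Delta^{(\catS)}_*(\g_{\shQ}[1])[-1] = \Delta^{(\catS)}_*(\g_{\shQ})$, i.e.\ essentially $\g_{\shQ}$ itself; denote this Lie algebra $\tilde{\g}$. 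The key input is the hypothesis: the $L$-action on $\g$ comes from a Lie morphism $\iota:L\to\g$. Hence on resolutions $\iota_{\shQ}:L_{\shQ}\to\g_{\shQ}=\tilde\g$, and the $L_{\shQ}$-action on $J(\g_{\shQ})$ factors as $L_{\shQ} \xrightarrow{\iota_{\shQ}} \tilde\g \to \Der(J(\g_{\shQ}))$, where the second map is the adjoint (Chevalley) action of $\tilde\g$ on its own Chevalley complex. But that adjoint action is exactly the one that extends, via the BV multiplication on $C = J(\g_{\shQ})[1]$, to the contractible Lie algebra $\tilde\g_\dagger := \Cone(\id_{\tilde\g})$ acting on $J(\g_{\shQ})$ — this is the content of \eqref{eq:bv:dagger} and the paragraph following it. Thus the $L_{\shQ}$-action lifts through $L_{\shQ} \xrightarrow{\iota_{\shQ}} \tilde\g \inj \tilde\g_\dagger$, and taking global sections gives a lift of the $R\Gamma(X,L)$-action through $R\Gamma(X,L) \to R\Gamma(X,\tilde\g_\dagger)$. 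Since $\tilde\g_\dagger = \Cone(\id)$ is contractible as a complex, so is $\Gamma(X,\tilde\g_{\dagger,\shQ})$ (the resolution $\shQ$ is homotopically flat and $\Gamma$ is exact on the flasque/acyclic sheaves involved), hence $R\Gamma(X,\tilde\g_\dagger) \simeq 0$ in $\Ho\opLie$, which supplies the canonical homotopy trivialization.

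There is a compatibility to spell out: the adjoint action of $\tilde\g$ on $\Sym^{\ge1}(\tilde\g[1])$ used in the BV package must be identified with the action on $J(\g)$ induced by $\iota$ and the $L$-action on $\g$ — i.e.\ that the diagram relating ``act on $\g$, then apply $\oln C$'' and ``apply $\oln C$, then act adjointly'' commutes. This follows because $\oln C$ is a functor on $\opLie(X^{\catS})$ and $\Delta^{(\catS)}_*$ is a tensor functor (Lemma \ref{lem:Delta*S}, and $C\circ\Delta^{(\catS)}_* = \Delta^{(\catS)}_*\circ C$), so any Lie self-action pushes through to the Chevalley complex compatibly; the inner-derivation action $\ad_{\iota(-)}$ on $\g$ becomes precisely the inner Chevalley action on $\oln C(\tilde\g)$. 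The main obstacle I expect is not any single computation but the bookkeeping of working in the homotopy category $\Ho\opLie(X)$ coherently: one must check that replacing $\g, L$ by their $\shQ$-resolutions is compatible with all the structure maps (the Lie morphism $\iota$, the actions, the BV operations) up to the relevant homotopies, so that the trivialization descends to a well-defined and \emph{canonical} statement in $\Ho\opLie$ independent of $\calU$. This is routine given the closed model structures on $\opLie(\CM)$, $\opBV(\M)$ etc.\ recorded in \S\ref{subsec:BV:filt} and the adjunctions of \S\ref{subsec:BV:chv}, but it is where the real care lies; I would handle it by fixing one cofibrant model and invoking that $\oln C$ and $C\mapsto C_1[-1]$ are Quillen functors, so the trivialization transported along any weak equivalence agrees up to canonical homotopy.
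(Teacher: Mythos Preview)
Your proposal is correct and follows essentially the same route as the paper's proof: factor the $R\Gamma(X,L)$-action through $\iota$ into the BV Lie algebra attached to $J(\g)_{\shQ}$, extend to the contractible $(\cdot)_\dagger = \Cone(\id)$ via the BV multiplication, and read off the canonical null-homotopy. The paper's write-up is terser (it forms $\Gamma(X,L_{\shQ})_\dagger$ directly rather than your sheaf-level $\tilde\g_\dagger$ followed by global sections), but the content is the same; your additional remarks on compatibility and independence of the resolution are correct and simply make explicit what the paper leaves implicit.
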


\begin{proof}
Recall the BV structure on $J(\g)$.
The morphism $\iota$ yields a morphism 
\[
 \Gamma(X,L_{\shQ}) \xrightarrow{\ \iota \ }
 \Gamma(X,\g_{\shQ}) \longinj (J(\g)_{\shQ})[-1]
\]
of Lie algebras.
So the Lie algebra $\Gamma(X,L_{\shQ})_{\dagger}$
(see \eqref{eq:bv:dagger} and around)
acts on $J(\g)$ via the canonical action of 
$\left(J(\g)_{\shQ}[-1]\right)_{\dagger}$ given by the BV structure.
Since $\Gamma(X,L_{\shQ})_{\dagger}$ is contractible,
we obtain a homotopy from the given action to a trivial action.
\end{proof}

%Following \cite{R2} and \cite[\S3]{R3}, 
%we introduce
%
%\begin{dfn}
%$\g \in \opLie(X)$ is said to \emph{have central extensions} 
%if for each open $U \subset X$ the image of the restriction map 
%$\Gamma(X,\g) \to \Gamma(U,\g)$ is contained 
%in the centre of $\Gamma(U,\g)$.
%\end{dfn}

Let us give a variant of this statement.
Suppose that $\g \in \opLie(X)$ is  a central extension 
\[
 0 \longto \shO_X \longto \g \longto \frkh \longto 0
\]
of dg Lie $\shO_X$-algebras and that 
$L \in \opLie(X)$ acts on $\g$ by the Lie homomorphism 
$\oln{\iota}: L \to \frkh$ and the adjoint action of $\frkh$ on $\g$.
Then we have $R\Gamma(X,L)$-action on $J(\g)$ similarly as above.

\begin{lem}\label{lem:bv:rigid:ext}
In this situation, the $R\Gamma(X,L)$-action is homotopically 
equivalent to the multiplication of a character.
\end{lem}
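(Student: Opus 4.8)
The plan is to reduce Lemma~\ref{lem:bv:rigid:ext} to Lemma~\ref{lem:bv:rigid} by passing to a Lie algebra that sits between $L$ and $\g$. Since $\g$ is a central extension of $\frkh$ by $\shO_X$ and $L$ acts via $\oln{\iota}:L\to\frkh$, the failure to lift $\oln{\iota}$ to a map $L\to\g$ is exactly what produces a character. So first I would form the pullback dg Lie $\shO_X$-algebra $\wt{L}:=L\times_{\frkh}\g$, which fits in a central extension
\[
 0 \longto \shO_X \longto \wt{L} \longto L \longto 0
\]
and comes equipped with a tautological Lie algebra morphism $\iota:\wt{L}\to\g$ over $\oln{\iota}$. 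The $L$-action on $\g$ lifts to a $\wt{L}$-action, and by construction this lifted action \emph{does} come from a Lie algebra morphism to $\g$; hence Lemma~\ref{lem:bv:rigid} applies verbatim to $\wt{L}$, and the $R\Gamma(X,\wt{L})$-action on $J(\g)$ is canonically homotopically trivialized.

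Next I would compare the $\wt{L}$-action with the original $L$-action through the contractible correction. Replacing everything by the $\shQ$-resolutions as in the proof of Lemma~\ref{lem:bv:rigid}, we have a morphism of extensions of Lie algebras
\[
 0 \longto \Gamma(X,\shO_X\otimes\shQ) \longto \Gamma(X,\wt{L}_{\shQ})
   \longto \Gamma(X,L_{\shQ}) \longto 0,
\]
and $\Gamma(X,\shO_X\otimes\shQ)$ computes $R\Gamma(X,\shO_X)$, which for our $X$ is just $k$ (or at worst a complex quasi-isomorphic to a one-dimensional space in degree $0$ plus higher cohomology that we absorb). The point is that the $R\Gamma(X,L)$-action on $J(\g)$ and the trivialized $R\Gamma(X,\wt{L})$-action differ precisely by the action of the central $R\Gamma(X,\shO_X)$ on $J(\g)$ coming from the BV structure, namely the $\Sym^1$-component $\shO_X=\Sym^0$-multiplication extended along the augmentation. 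That central action is multiplication by a scalar, i.e.\ by a character of $R\Gamma(X,L)$ obtained as the composite $R\Gamma(X,L)\to R\Gamma(X,\shO_X)[1]\to k[1]$ of the connecting map of the extension $\wt{L}$ with the trace/augmentation; so up to homotopy the $R\Gamma(X,L)$-action equals this character times the trivial action.

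The cleanest way to package the last step is via the map $L_{\dagger}$ construction of \S\ref{subsec:connect}–\S\ref{subsec:BV:dfn}: the morphism $\Gamma(X,\wt{L}_{\shQ})\xrightarrow{\iota}\Gamma(X,\g_{\shQ})\hookrightarrow (J(\g)_{\shQ})[-1]$ gives an action of $\Gamma(X,\wt{L}_{\shQ})_{\dagger}$ on $J(\g)$ through $(J(\g)_{\shQ}[-1])_{\dagger}$, and $\Gamma(X,\wt{L}_{\shQ})_{\dagger}$ is contractible. Restricting this homotopy along $\Gamma(X,L_{\shQ})\to\Gamma(X,\wt{L}_{\shQ})_{\dagger}$—where the lift is only defined up to the central term—produces a homotopy from the $R\Gamma(X,L)$-action to the action by the character determined by that central term. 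I expect the main obstacle to be bookkeeping rather than conceptual: one must check that the central $\shO_X$ really acts on $J(\g)$ through the BV multiplication $m$ by a \emph{constant}, i.e.\ that the induced $R\Gamma(X,\shO_X)$-action factors through $H^0(X,\shO_X)=k$ and hence is genuinely a character and not something more complicated, and that the choice of pullback $\wt{L}$ makes all the identifications in $\Ho\opLie(X)$ compatible with the BV/Chevalley functors as in Lemma~\ref{lem:Jacobi:hot} and Corollary~\ref{cor:BV:hot}.
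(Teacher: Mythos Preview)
Your approach is essentially the same as the paper's. The paper also forms the pullback $L^{\flat}:=L\times_{\frkh}\g$ (your $\wt{L}$), with its tautological lift $\iota^{\flat}:L^{\flat}\to\g$, and then uses the BV structure exactly as in Lemma~\ref{lem:bv:rigid}. The only packaging difference is that the paper works directly with $L^{\flat}_{\dagger}:=\Cone(\ve:L^{\flat}\surj L)$, observes that its action on $J(\g)$ extends the $R\Gamma(X,L)$-action, and that this cone is homotopy equivalent to $k$ (coming from the central $\shO_X$); whereas you first trivialize the $R\Gamma(X,\wt{L})$-action via $\Cone(\id_{\wt{L}})$ and then compare with the $L$-action through the central extension. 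These are two ways of reading the same diagram. Your caveat about whether the central $\shO_X$ really acts through $H^0(X,\shO_X)=k$ is the one point where the paper is equally terse---it simply asserts the equivalence with $k$---so your caution there is appropriate rather than a gap.
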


\begin{proof}
Denote by $L^{\flat}$ the $\shO_X$-extension  of $L$ 
defined as the pull-back of $\g \surj \frkh$ by $\iota$.
The resulting Lie algebra morphism $\ve: L^{\flat} \surj L$ 
yields a dg Lie algebra $L^{\flat}_{\dagger} := \Cone(\ve)$.
As in the previous Lemma \ref{lem:bv:rigid},  
the Lie algebra 
$\wt{L} := R\Gamma(X,L_{\dagger}^{\flat})$ acts on $J(\g)$,
and it is homotopic to $R\Gamma(X,L)$.
In the homotopy category $\wt{L}$ is equivalent to $k$.
Thus we are done.
\end{proof}

\subsection{Relative rigidity}
\label{subsec:BV:rel-rigid}

We want to discuss a relative version of the rigidity property 
explained in \S \ref{subsec:BV:rigid}.
Our presentation is an analog of 
``the package" of Lie algebroid action on a chiral algebra governed by 
a $\Lie^*$ algebra action 
in \cite[\S\S4.5.4, 4.5.5]{BD}.

Let $\pi:\X \to S$ be a smooth proper flat family of $k$-schemes.
For a point $s \in S$ we denote by $\X_s$ the corresponding fiber.
All the notions explained in the previous sections have a natural relative version.
For example, the Ran space $\R(\X/S)$ is a space fibered over $S$  
with the fiber the usual Ran space $\R(\X_s)$.
The symbol $\opLie(\X/S)$ denotes the relative version of 
Definition \ref{dfn:jac:Lie(X)}.
Namely the category of $\shO_S$-flat $\shO_S$-modules $\g$ such that 
$\g_s$ is a dg Lie $\shO_{\X_s}$-algebras for every $s \in S$
satisfying the two conditions similar as in Definition \ref{dfn:jac:Lie(X)}.
The objects in $\opLie(\X/S)$ will be called 
the \emph{flat family of dg Lie algebras on $\X/S$}.
We also have the notion of \emph{flat family of dg Lie algebroids on $\X/S$}.
The \emph{relative Jacobi complex} $J(\g)$ for $\g \in \opLie(\X/S)$ 
is naturally defined.
Finally 
let us denote by $R\pi_*(\R(\X/S),-)$ the relative version of the functor 
$R\Gamma(\R(X),-)$ defined in \S\ref{subsec:Ran:XS}.

Let $\shL$ be a Lie algebroid over $S$ with 
$\tau:\shL \to \Theta_S$ the anchor.
Define 
\[
 \pi^{\sharp}\shL := \pi^{-1}\shL \otimes_{\pi^{-1}\Theta_S} \Theta_{\pi}.
\]
Here $\Theta_{\pi}$ is the subsheaf in $\Theta_{\X}$ consisting of 
vector fields preserving $\pi^{-1}\shO_{S} \subset \shO_{\X}$
(see also \eqref{eq:Theta_pi} and \eqref{seq:Theta}).
Hence $\pi^{\sharp}\shL$ is a Lie $\pi^{-1}\shO_{S}$-algebroid 
acting on $\shO_{\X}$, and sits in the exact commutative diagram 
\[
 \xymatrix{
    0 \ar[r]                
  & \Theta_{\X/S}    \ar[r] \ar@{=}[d] 
  & \pi^{\sharp}\shL \ar[r] \ar[d]
  & \pi^{-1}\shL     \ar[r] \ar[d]^{\pi^{-1} \tau}  &  0
 \\
    0 \ar[r]  
  & \Theta_{\X/S}      \ar[r]  
  & \Theta_{\pi}       \ar[r] 
  & \pi^{-1}\Theta_{S} \ar[r] 
  &  0
 }.
\]
Let us also define $\pi^{\dagger}\shL$ to be the push-out 
of $\shO_{\X} \otimes_{\pi^{-1}\shO_S} \pi^{\sharp}\shL$
by the product map 
$\shO_{\X} \otimes_{\pi^{-1}\shO_{S}} \Theta_{\X/S} \to \Theta_{\X/S}$.
We have a short exact sequence
\[
 0 \longto \Theta_{\X/S} \longto \pi^{\dagger} \shL 
   \longto \pi^*\shL \longto 0
\]
of $\shO_{\X}$-modules,
If $M$ is a left $\shL$-module, then $\pi^* M$ is naturally 
a left $\pi^{\dagger}\shL$-module.

Mimicking the setting in \cite[\S4.5.4]{BD}, 
let us suppose that the following data is given.
\begin{enumerate}[\quad \ \ (a)]
\item 
a dg Lie algebroid $\shL$ on $S$,
\item
a Lie $\pi^{-1}\shO_{S}$-algebra $L$,
\item
a dg Lie $\pi^{-1}\shO_S$-algebroid extension $\shK$ 
of $\pi^{\sharp}\shL$ by $L$, 
\item
a section $s:\Theta_{\X/S} \to \shK$,
\item
a left $\shK$-module structure on $L$
(where $\shK$ is seen as a Lie $\pi^{-1}\shO_S$-algebroid),
\item
a Lie algebroid $\shA$ on $\X/S$ which is $\shO_S$-flat.
\item
a left $\shK$-module structure on $\shA$.
\item
a  morphism $\iota: L \to \shA$ of dg Lie algebras on $\X/S$.
\end{enumerate}
These should satisfy
\begin{enumerate}
\item 
The $\shK$-actions on $L$ and $\shA$ are compatible with 
the Lie brackets on $L$ and $\shA$.

\item
$\iota:L \to \shA$ commutes with the $\shK$-actions.

\item
$s(\Theta_{\X/S}) \subset \shK$ is a Lie ideal.
Hence $\shK$ is an extension of $\pi^{-1}\shL$ 
by $\shK_0 := L \otimes_k \Theta_{\X/S}$.

\item
$\Theta_{\X/S} \subset \shK_0$ acts on $\shA$ and $L$ trivially.

\item
$L \subset \shK_0$ acts on $L$ via the adjoint action, 
and on $\shA$ via $\iota$ and the adjoint action.

\item
The adjoint action of $\shK$ on $L \subset \shK_0$ 
coincides with the $\shK$-action on $L$ coming from 
the left $\shK$-module structure.
\end{enumerate}

\begin{thm}
There is a homotopy left $\shL$-module structure on 
$R^i \pi_*(\R(\X/S),J(\shA))$ for each $i$.
In particular, if $\shL$ is a plain (non dg) Lie $\shO_S$-algebroid,
then $R^i \pi_*(\R(\X/S),J(\shA))$ 
are left $\shL$-modules.
\end{thm}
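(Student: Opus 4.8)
The statement is the relative counterpart of the rigidity results of \S\ref{subsec:BV:rigid}, and the plan is to transcribe ``the package'' of \cite[\S\S4.5.4, 4.5.5]{BD} into the present framework. First I would pass to relative resolutions: replace $\shA$, $L$ and $\shK$ by their tensor products with the relative Dolbeault-style algebra $\shQ_{\X/S}$ (the relative version of the construction in \S\ref{subsec:resol}, for a finite affine covering of $\X$ adapted to $\pi$), obtaining $\shA_{\shQ}$, $L_{\shQ}$, $\shK_{\shQ}$, which are homotopically $\shO_S$-flat and whose relative global sections compute $R\pi_*$; all the Lie, module and BV structures are preserved up to canonical quasi-isomorphism, exactly as in the proof of Lemma \ref{lem:bv:rigid}. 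From now on I write $J(\shA)$ for $J(\shA_{\shQ}) = J(\shA)_{\shQ}$ and recall from \S\ref{subsec:BV:chv} that it carries a BV-algebra structure, so that $J(\shA)[-1]$ is a dg Lie algebra acting on $J(\shA)$ and this action extends to the contractible Lie algebra $\bigl(J(\shA)[-1]\bigr)_{\dagger} = \Cone(\id)$.

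Next, the left $\shK$-module structure on $\shA$ of datum (g), which by conditions (1), (5) and (6) is a derivation action compatible with all the brackets, induces functorially — by lifting vector fields diagonally along the diagonal embeddings of the relative $\catS$-diagram over $S$ — a $\shK$-action on the relative Jacobi complex $J(\shA)$ compatible with its coproduct and BV structure. Pushing forward along $\pi$ then yields a homotopy action of the dg Lie algebroid $\wt{\shK} := R\pi_*(\shK_{\shQ})$ on $R\pi_*(\R(\X/S), J(\shA))$. By condition (4) the subsheaf $s(\Theta_{\X/S})$ acts trivially on $\shA$, hence on $J(\shA)$, so the action factors through $\shK' := \shK/s(\Theta_{\X/S})$, which by condition (3) sits in a short exact sequence of Lie $\pi^{-1}\shO_S$-algebroids $0 \to L \to \shK' \to \pi^{-1}\shL \to 0$.

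The rigidity step is then an application of Lemma \ref{lem:bv:rigid}. By condition (5) and datum (h) the ideal $L \subset \shK'$ acts on $\shA$ through $\iota : L \to \shA$ and the adjoint action, so the composite $L_{\shQ} \xrightarrow{\ \iota\ } \shA_{\shQ} \inj J(\shA)[-1]$ is a morphism of dg Lie algebras; hence, as in Lemma \ref{lem:bv:rigid}, the contractible $(L_{\shQ})_{\dagger}$ acts on $J(\shA)$ through $\bigl(J(\shA)[-1]\bigr)_{\dagger}$, producing a canonical homotopy trivialization of the $R\pi_*(L_{\shQ})$-action. The crucial point, guaranteed by conditions (1), (2) and (6), is that this trivialization is $\shK'$-equivariant: the adjoint action of $\shK'$ on $L$ coincides with its module structure, and $\iota$ intertwines the $\shK'$-actions on $L$ and $\shA$, so the trivializing homotopy is compatible with the ambient $\wt{\shK'}$-action.

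Finally I would descend to $\shL$. The $\shK'$-equivariant trivialization of the previous step upgrades the $\wt{\shK'}$-action on $R\pi_*(\R(\X/S), J(\shA))$ to an action of the dg Lie algebroid $R\pi_*\bigl(\Cone(L_{\shQ} \to \shK'_{\shQ})\bigr)$, which is quasi-isomorphic to the relative pushforward of a resolution of $\pi^{-1}\shL$; composing with the natural adjunction morphism $\shL \longto R\pi_*\bigl(\Cone(L_{\shQ} \to \shK'_{\shQ})\bigr)$ — available because $\pi$ is proper, in the spirit of the map $\Theta_S \to J(\pi^{-1}\Theta_S)$ of \S\ref{subsec:hks:G:seq} — equips $R\pi_*(\R(\X/S), J(\shA))$ with a homotopy left $\shL$-module structure compatible with the anchor, i.e.\ an action of a dg Lie algebroid quasi-isomorphic to $\shL$. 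Passing to cohomology gives the asserted homotopy $\shL$-module structure on each $R^i\pi_*(\R(\X/S), J(\shA))$, and when $\shL$ has zero differential the higher homotopies vanish on cohomology classes, so each $R^i\pi_*(\R(\X/S), J(\shA))$ is an honest left $\shL$-module. I expect the main obstacle to be precisely this last bookkeeping: the explicit cone construction and the verification that the rigidity trivialization is $\shK'$-equivariant, which is the faithful translation of ``the package'' of \cite[\S\S4.5.4, 4.5.5]{BD}; a secondary, essentially notational, difficulty is to check that the relative pushforward $R\pi_*(\R(\X/S), -)$, built from the construction of \S\ref{subsec:Ran:XS}, commutes suitably with the diagonal lift of the $\shK$-action to the relative Ran space.
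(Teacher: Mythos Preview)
Your proposal is correct and follows essentially the same route as the paper's proof: resolve by a Dolbeault algebra, use condition (4) to pass from $\shK$ to $\shK' = \shK/s(\Theta_{\X/S})$ (the paper's $\wt{\shK}$) sitting in $0 \to L \to \shK' \to \pi^{-1}\shL \to 0$, and then invoke the BV structure together with Lemma~\ref{lem:bv:rigid} to trivialize the $L$-part via $L_\dagger$. The only cosmetic difference is the final descent step: the paper phrases it as the pushout of $\pi_*\wt{\shK}_{\shQ}$ along the embedding $\pi_* L_{\shQ} \hookrightarrow \pi_*(L_\dagger \otimes \shQ)$ (so that the resulting $\wt{\shL}_{\shQ}$ is directly quasi-isomorphic to $\shL_{\shQ}$), whereas you use a cone and an adjunction map; these are equivalent formulations of the same construction.
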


\begin{proof}
By the condition (3) we have a short exact sequence
\begin{equation}\label{seq:K}
 0 \longto \shK_0 = L \otimes \Theta_{\X/S} \longto \shK 
   \longto \pi^{-1}\shL \longto 0
\end{equation}
The strategy is to obtain a homotopy $\shL$-action from
the $R\pi_{*}\shK$-action  (given by the sixth data)
by trivializing homotopically the action of $R\pi_{*}\shK_0$ 
as in Lemma \ref{lem:bv:rigid}.

Let $\shQ$ be a Dolbeault $\shO_S$-algebra equipped 
with a left $\pi^{\dagger}\shL$-action
(which always exists by \cite[\S4.5.5 Proof (i) (b)]{BD}).
We can replace $L$, $\shL$, $\shA$ and so on by 
homotopy Lie algebras (algebroids)  $L_{\shQ} := L \otimes \shQ$,
$\shL_{\shQ}$, $\shA_{\shQ}$ and so on 
preserving the conditions.

By the condition (4), the $\shK_0$-action on $A$ factor through 
an action of 
$\wt{\shK}_{\shQ} := (\shK_{0} \otimes \shQ)/(\Theta_{\X/s} \otimes \shQ)$.
Then the sequence \eqref{seq:K} yields
\[
 0 \longto \pi_*  L_{\shQ}  \longto \pi_* \wt{\shK}_{\shQ} 
   \longto \shL_{\shQ} \longto 0 
\]
The $\wt{\shK}$-action induces a $\pi_*\wt{\shK}$-action on 
$R\pi_*(\R(\X/S),J(\shA))$.

Set $L_{\dagger} := \Cone(\id:L \simto L)$,
which is a contractible Lie algebra.
There is also a natural embedding $L_{\shQ} \inj L_{\dagger} \otimes \shQ$.
Now define $\wt{\shL}_{\shQ}$ to be the push-out of $\pi_* \wt{\shK}_{\shQ}$ 
by this embedding.
$\wt{\shL}$ sits in the exact commutative diagram.
\[
 \xymatrix{
    0 \ar[r] 
  & \pi_* L_{\shQ}        \ar[r] \ar[d] 
  & \pi_*\wt{\shK}_{\shQ} \ar[r] \ar[d]
  & \shL_{\shQ}           \ar[r] \ar[d]^{\tau}  &  0
 \\
    0 \ar[r]  
  & \pi_* L_{\dagger} \otimes \shQ \ar[r]  
  & \wt{\shL}_{\shQ}               \ar[r] 
  & \Theta_S \otimes \shQ          \ar[r] 
  & 0
 }.
\]
Since $\pi_* L_{\dagger} \otimes \shQ$ is contractible, 
we find that a left $\pi_* \wt{\shL}_{\shQ}$-module is equivalent to 
a left $\pi_* \shL_{\shQ}$-module.

The $\pi_*\wt{\shK}_{\shQ}$-action on $R\pi_*(\R(\X/S),J(\shA))$ 
induces a left $\pi_* \shL_{\shQ}$-module structure 
on $R\pi_*(\R(\X/S),J(\shA))$,
which is the desired homotopy left action of $\pi_* \shL$.
\end{proof}

We can consider a variant of this theorem as in the previous subsection.
Suppose we are given the data (a)--(e), (g) as before and 
\begin{enumerate}[\quad \ \ (a')]
\setcounter{enumi}{5}
\item 
an $\shO_S$-flat dg Lie algebroid $\shA$ on $\X/S$ which is 
a central extension Lie $\shO_{\X/S}$-algebra, 
\setcounter{enumi}{7}
\item
a morphism $\oln{\iota}: L \to \shA/\shO_{\X/S}$ of Lie algebras.
\end{enumerate}
These should satisfy the conditions (1)--(6) 
with $\iota$ replaced by $\oln{\iota}$.

Then we have an $\shO_{\X/S}$-extension $L^{\flat}$ of $L$ 
and a morphism of Lie algebras 
$\iota^{\flat}: L^{\flat} \to \shA$ lifting $\oln{\iota}$.
$\iota^{\flat}$ commutes with the $\shK$-action on $L^\flat$ 
which is the lift of the action (e).

\begin{thm}
There is a homotopy $\shO_S$-extension $\shL^{\flat}$ of $\shL$ 
and a left $\shL^{\flat}$-module structure on 
$R \pi_*(\R(\X/S),J(\shA))$.
\end{thm}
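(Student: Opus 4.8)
The plan is to run the argument of the preceding theorem with the data $L$, $\iota$, $\shK$ replaced by the $\shO_{\X/S}$-extended data $L^{\flat}$, $\iota^{\flat}$ and a suitable $\shO_{\X/S}$-extension $\shK^{\flat}$ of $\shK$, exactly in the way Lemma \ref{lem:bv:rigid:ext} refines Lemma \ref{lem:bv:rigid}. Recall from the set-up the central extension $0\to\shO_{\X/S}\to L^{\flat}\to L\to 0$ and the lift $\iota^{\flat}:L^{\flat}\to\shA$ of $\oln{\iota}$, which is now a genuine morphism of dg Lie algebras on $\X/S$ and is equivariant for the $\shK$-action on $L^{\flat}$. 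First I would produce an $\shO_{\X/S}$-extension $\shK^{\flat}$ of $\shK$ which is a dg Lie $\pi^{-1}\shO_S$-algebroid extension of $\pi^{\sharp}\shL$ by $L^{\flat}$, carrying lifts of the section $s$ of (d) and of the $\shK$-actions in (e) and (g), and for which $\iota^{\flat}$ is equivariant; as in Lemma \ref{lem:bv:rigid:ext}, this is the only place the central $\shO_{\X/S}$ enters. Denoting by $\shK^{\flat}_0$ the ideal of $\shK^{\flat}$ (the analogue of $\shK_0$), one checks that the data (a)--(e), (g) together with $\shA$ as in (f'), $L^{\flat}$, $\iota^{\flat}$, $\shK^{\flat}$ satisfy the conditions (1)--(6), since by hypothesis (f') and (h') already satisfy (1)--(6) with $\oln{\iota}$.

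Next I would carry out the rigidity reduction. Choose, as in \cite[\S4.5.5]{BD}, a Dolbeault $\shO_S$-algebra $\shQ$ with a left $\pi^{\dagger}\shL$-action, and replace every Lie algebra (algebroid) in sight by its $\shQ$-tensor resolution; this changes nothing in $\Ho\opLie$. The genuine morphism $\iota^{\flat}$ yields a morphism of Lie algebras $\pi_*L^{\flat}_{\shQ}\xrightarrow{\ \iota^{\flat}\ }\pi_*\shA_{\shQ}\inj J(\shA)_{\shQ}[-1]$, so by the BV structure on the Jacobi complex (\S\ref{subsec:BV:chv}) the contractible Lie algebra $\bigl(\pi_*L^{\flat}_{\shQ}\bigr)_{\dagger}=\Cone(\id)$ acts on $J(\shA)$ through $\bigl(J(\shA)_{\shQ}[-1]\bigr)_{\dagger}$, exactly as in the proof of Lemma \ref{lem:bv:rigid}. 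Using condition (4) the $\shK^{\flat}_0$-action on $\shA$ factors through $\wt{\shK}^{\flat}_{\shQ}:=(\shK^{\flat}_0\otimes\shQ)/(\Theta_{\X/S}\otimes\shQ)$, and the sequence $0\to\shK^{\flat}_0\to\shK^{\flat}\to\pi^{-1}\shL\to 0$ (cf.\ \eqref{seq:K}) gives
\[
 0 \longto \pi_* L^{\flat}_{\shQ} \longto \pi_*\wt{\shK}^{\flat}_{\shQ}
   \longto \shL_{\shQ} \longto 0 .
\]
Taking the push-out of $\pi_*\wt{\shK}^{\flat}_{\shQ}$ along $\pi_*L^{\flat}_{\shQ}\inj\bigl(\pi_*L^{\flat}_{\shQ}\bigr)_{\dagger}$ gives a Lie algebra $\wt{\shL}^{\flat}_{\shQ}$ with $0\to\bigl(\pi_*L^{\flat}_{\shQ}\bigr)_{\dagger}\to\wt{\shL}^{\flat}_{\shQ}\to\shL_{\shQ}\to 0$ and contractible fibre; since $\pi_*L^{\flat}_{\shQ}$ is \emph{not} contractible, only the subquotient coming from the central $\shO_{\X/S}\subset L^{\flat}$ survives, and $R\pi_*$ of it is a homotopy $\shO_S$-extension $\shL^{\flat}$ of $\shL$ (which is where properness of $\pi$ is used) --- precisely the relative form of the passage from Lemma \ref{lem:bv:rigid} to Lemma \ref{lem:bv:rigid:ext}. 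The $\wt{\shK}^{\flat}_{\shQ}$-action on $R\pi_*(\R(\X/S),J(\shA))$ then descends to the sought left $\shL^{\flat}$-module structure, independent of $\shQ$ in the homotopy category.

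The hard part will be bookkeeping rather than a conceptual point. One must check that the three extension/push-out diagrams --- the one defining $\shK^{\flat}$, the one producing $\wt{\shK}^{\flat}_{\shQ}$ out of \eqref{seq:K}, and the one producing $\shL^{\flat}$ --- are mutually compatible, compatible with all $\shK^{\flat}$-actions, and compatible with the \emph{filtered} BV structure on $J(\shA)$, so that one genuinely obtains a homotopy action and the extension class of $\shL^{\flat}$ over $\shL$ is well defined and independent of the Dolbeault resolution. As in \cite[\S4.5.5]{BD}, this rests on the interplay of the $\dagger$-construction with the cocommutative coalgebra structure of the Jacobi complex and on condition (6), which matches the adjoint action of $\shK^{\flat}$ on its ideal with the module action; re-checking this matching, and the equivariance of $s$ and $\iota^{\flat}$, after passing to $L^{\flat}$ and $\shK^{\flat}$ is where the real work lies.
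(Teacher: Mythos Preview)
Your proposal has a genuine gap at the step where you form the push-out. You push out $\pi_*\wt{\shK}^{\flat}_{\shQ}$ along $\pi_*L^{\flat}_{\shQ}\hookrightarrow\bigl(\pi_*L^{\flat}_{\shQ}\bigr)_{\dagger}=\Cone(\id)$ and correctly observe that the resulting fibre is contractible. But then the conclusion you draw --- that ``only the subquotient coming from the central $\shO_{\X/S}\subset L^{\flat}$ survives'' --- does not follow: a push-out along an embedding into a contractible object produces a homotopy-trivial extension, full stop. What you have actually shown is that $\wt{\shL}^{\flat}_{\shQ}\simeq\shL_{\shQ}$ in the homotopy category, i.e.\ you recover a plain $\shL$-action and no $\shO_S$-extension appears at all. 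The sentence about $\pi_*L^{\flat}_{\shQ}$ being ``not contractible'' is beside the point, since after push-out it has been replaced by $\Cone(\id)$.

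The paper's fix is exactly the one suggested by the passage from Lemma~\ref{lem:bv:rigid} to Lemma~\ref{lem:bv:rigid:ext}, but you have transcribed it incorrectly: one does \emph{not} replace $L$ by $L^{\flat}$ everywhere and then take $\Cone(\id_{L^{\flat}})$. Rather, one keeps the original $\shK$ and the original sequence $0\to\pi_*L_{\shQ}\to\pi_*\wt{\shK}_{\shQ}\to\shL_{\shQ}\to 0$, and replaces only the target of the push-out map, using
\[
 L \longinj L^{\flat}_{\dagger}:=\Cone\bigl(L^{\flat}\twoheadrightarrow L\bigr)
\]
in place of $L\hookrightarrow L_{\dagger}=\Cone(\id_L)$. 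The Lie algebra $L^{\flat}_{\dagger}$ still acts on $J(\shA)$ through the BV structure (the $L^{\flat}[1]$-summand via $\iota^{\flat}$ and multiplication, the $L$-summand via the adjoint action coming from $\oln{\iota}$), but it is \emph{not} contractible: it is quasi-isomorphic to $\shO_{\X/S}[1]$. After push-out and $R\pi_*$ this is what produces the homotopy $\shO_S$-extension $\shL^{\flat}$. As a bonus, this route makes your auxiliary construction of $\shK^{\flat}$ unnecessary; note that you never justified why such a lift of the extension $\shK$ along $L^{\flat}\twoheadrightarrow L$ should exist.
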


\begin{proof}
The proof is similar with $L_{\dagger}$ replaced by 
$L^\flat_{\dagger}:=\Cone(L^{\flat} \surj L)$
See also the proof of Lemma \ref{lem:bv:rigid:ext}.
\end{proof}

%%%%%%%%%%%%%%%%%%%%%%%%%%%%%%%%%%%%%%%%%%%%%%%%%%%%%%%%%%%%%%%%%%%%%%
\subsection{Hitchin connection}

Let us apply the discussion in the last \S \ref{subsec:BV:rel-rigid}
to the  relative version of  \S\ref{subsec:G-bundle:thm}.

Let $G$ be a semi-simple algebraic $k$-group and 
$\calC \to S$ be a flat family of smooth $k$-curves of 
genus greater than $2$.
Then we have a fine  moduli scheme $\X_s := \M_G(\calC_s)$ of 
$G$-torsors on the smooth curve $\calC_s$ for each $s \in S$.
They form a flat family  $\X \to \calC$ over $S$.
Let us write $\pi:\X \to S$ the projection.
%By \S\ref{subsec:hks:G:seq} we have the short exact sequence
%\[
% 0 \longto \Theta_{\Y/S} \longto \Theta_{\pi} 
%   \longto \pi^{-1}\Theta_S \longto 0
%\]
%of sheaves of vector fields.

Set $\shA := (\pi^{-1} \Theta_{S})^G$,
which is the sheaf of $G$-invariant vector fields on the pull-back.
It is a Lie algebroid over $S$ with the anchor 
$\tau: \shA \to \Theta_{S}$.
%which make $\shA_{P}$ a Lie algebroid over $X$.
%If $H^0(X,\shA_P)=0$ and 
%there is a universal deformation of $(X,P)$.
%By Theorem \ref{thm:G-bundle}, we have a universal family 
By \S\ref{subsec:hks:G:seq} we have the short exact sequence 
\[
  0 \longto \shA_{\X/S}=\tau^{-1}\Theta_{\X/S} \longto 
  \shA_{\pi} = \tau^{-1}\Theta_{\pi}
  \longto \Theta_S \longto 0.
\]
The Lie algebroid $\Theta_{\pi}$ naturally acts on $\shA_{\pi}$.
Note that the Lie algebra $\Theta_{\X/S}$ acts via the $\Theta_{\pi}$-action.

Consider the data
\begin{enumerate}[\quad \ \ (a)]
\item
The (non dg) Lie algebroid $\shL=\Theta_S$ on $S$.
\setcounter{enumi}{1}
\item
The Lie $\pi^{-1}\shO_{S}$-algebra $L = \Theta_{\X/S}$.
\item
The Lie $\pi^{-1}\shO_S$-algebroid extension $\shK$ 
of $\pi^{\sharp}\shL=\Theta_{\pi}$ by $L=\Theta_{\X/S}$ 
sitting in the exact commutative diagram
\[
 \xymatrix{
    0                \ar[r]
  & \Theta_{\X/S}    \ar[r] \ar[d]_{\Delta}
  & \Theta_{\pi}     \ar[r] \ar[d]
  & \pi^{-1}\Theta_S \ar[r] \ar@{=}[d]
  & 0
  \\
    0             \ar[r]
  & \shK_0        \ar[r]_{j} 
  & \shK          \ar[r] 
  & \pi^{-1}\shL  \ar[r]
  & 0
 }.
\] 
Here 
$\Delta:\Theta_{\X/S} \inj \shK_0 
 := \Theta_{\X/S} \otimes L = \Theta_{\X/S}^{\otimes 2}$
is the diagonal embedding 
\item
The section $s:\Theta_{\X/S} \to \shK$ given by the first component of 
the morphism $j$ in (c).
\item
The left $\shK$-module structure on $L=\Theta_{\X/S}$ 
naturally arising from $\Theta_{\X/S} \subset \shK$ and the adjoint action,
\item[\quad \ \ (f')] 
The $\shO_S$-flat Lie algebroid $\shA_{\pi}$ on $\X/S$.
\setcounter{enumi}{6}
\item
The left $\shK$-module structure on $\shA_{\pi}$ arising from 
the  $\Theta_{\pi}$-action.
\item[\quad \ \ (h')]
The  morphism $\oln{\iota}: L=\Theta_{\X/S} \to \shA_{\pi}/\shO_{\X/S}$ of 
Lie algebras on $\X/S$ arising the action of 
$\Theta_{\X/S} \subset \Theta_{\pi}$ on $\shA_{\pi}$.
\end{enumerate}

The conditions (1)--(6) hold, so there is 
a left $\Theta_S^{\flat}$-module structure on 
$R^i \pi_*(\R(\X/S),J(\shA_{\pi}))$ for each $i$.
Setting $i=0$ and noting that a $\Theta_S$-module is 
equivalent to a projective $\Diff_S$-module structure 
by Example \ref{eg:Lalgd:Theta}.
Then seen at the  second order structure,
there is a projective flat connection on the space of relative global section.
The first order part of $\pi_*(\R(\X/S),J(\shA_{\pi}))$ 
is the vector bundle on $S$ whose fiber over $s \in S$ is 
the space $\Gamma(\X_s,\det)$ of global sections of 
the determinant line bundle $\det$ on the fine moduli scheme $\X_s$
of $G$-torsors.
In other words, 
$\Gamma(\X_s,\det)$ is the space of generalized theta functions.
Consequently  we recover Hitchin's result \cite{H,R2}.

\begin{thm}
There is a projective flat connection on the vector bundle on $S$ 
with fiber the space of generalized theta functions.
\end{thm}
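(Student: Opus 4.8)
The statement is obtained by specializing the variant Theorem of \S\ref{subsec:BV:rel-rigid} to the data $(a)$--$(e)$, $(f')$, $(g)$, $(h')$ assembled just above, so the plan is essentially to unwind what that theorem produces in the present geometric situation. First I would invoke it: its hypotheses $(1)$--$(6)$ having already been verified for this data, it yields a homotopy $\shO_S$-extension $\Theta_S^{\flat}$ of $\shL=\Theta_S$ together with, for every $i$, a left $\Theta_S^{\flat}$-module structure on $R^i\pi_*(\R(\X/S),J(\shA_{\pi}))$. Since $\shL=\Theta_S$ is here a plain (non-dg) Lie $\shO_S$-algebroid, the construction rigidifies the homotopy action to an honest $\Theta_S^{\flat}$-action on the $\shO_S$-module $R^i\pi_*(\R(\X/S),J(\shA_{\pi}))$, exactly as in the first Theorem of that subsection; from now on I would take $i=0$.

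Next I would translate this into the language of connections. By Example \ref{eg:Lalgd:Theta} one has $U_{\shO_S}(\Theta_S)=\Diff_S$ with $F_nU_{\shO_S}(\Theta_S)=\Diff_S^{\le n}$, so a left $\Theta_S$-module is the same as a left $\Diff_S$-module, i.e.\ a flat connection, and a left module over the central $\shO_S$-extension $\Theta_S^{\flat}$ is precisely a \emph{projective} flat connection. Reading off the action of $\Diff_S^{\le 1}$ gives the covariant derivative and the action of $\Diff_S^{\le 2}$ encodes its (projective) flatness; hence one obtains a projective flat connection on $R^0\pi_*(\R(\X/S),J(\shA_{\pi}))$, filtered by the images of the $J_n(\shA_{\pi})$. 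Because the constructions of \S\ref{subsec:BV:chv} respect the filtration $J_{\bullet}$, this $\Theta_S^{\flat}$-action is filtration-compatible, so the connection descends to every filtered subquotient, in particular to the first one.

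Finally I would identify that first filtration step. One has $J_1(\shA_{\pi})=\Sym^1\bigl(\Delta^{(\catS)}_*(\shA_{\pi})[1]\bigr)=\Delta^{(\catS)}_*(\shA_{\pi})[1]$, and the $\shO_{\X/S}$-central extension carried by $\shA_{\pi}$ (item $(f')$ above) is the Atiyah algebroid of the determinant line bundle $\det$ on the fibers $\X_s=\M_G(\calC_s)$. By the classical dictionary between Atiyah algebroids, theta characteristics and spaces of conformal blocks, the relative degree-zero global sections of $J_1(\shA_{\pi})$ compute fibrewise the space $\Gamma(\X_s,\det)$ of generalized theta functions (see \cite{H,R2} and \cite[Chap.\ 16--17]{FBZ}). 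Thus the connection of the previous paragraph restricts to a projective flat connection on the bundle $s\mapsto\Gamma(\X_s,\det)$, which is the asserted statement and recovers Hitchin's construction \cite{H}.

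The main obstacle is this last identification, together with the rigidification in the first step: one must check carefully that the homotopy $\Theta_S^{\flat}$-action is canonically strictified --- which rests on $\shL$ being a plain Lie algebroid, so that the higher coherences are under control --- and, more substantially, that the first-order part of the relative Jacobi complex of the Atiyah algebroid $\shA_{\pi}$ is genuinely and $\shO_S$-linearly the theta bundle rather than merely abstractly isomorphic to it, so that the connection written down really is the Hitchin connection of \cite{H,R2}. Everything else is a formal transport of the machinery already developed in \S\ref{subsec:BV:rel-rigid} and presents no difficulty.
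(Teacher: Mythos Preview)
Your proposal is correct and follows essentially the same route as the paper: invoke the variant theorem of \S\ref{subsec:BV:rel-rigid} for the data $(a)$--$(e)$, $(f')$, $(g)$, $(h')$, take $i=0$, translate the $\Theta_S^{\flat}$-module structure into a projective $\Diff_S$-module via Example~\ref{eg:Lalgd:Theta}, and then identify the first filtration step with the bundle of generalized theta functions. The paper is actually terser than you are on the last two points (the filtration compatibility and the Atiyah-algebroid identification), and the obstacles you flag are precisely the places where the paper's own argument is left implicit.
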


\end{document}